\title{$L^2$ estimates for the $\dbar$ operator}
\author{Jeffery D. McNeal}
\email{mcneal@math.ohio-state.edu}
\address{
Department of Mathematics \newline \indent
Ohio State University \newline \indent
Columbus, OH 43210-1174}
\author{Dror Varolin} 
\email{dror@math.sunysb.edu}
\address{
Department of Mathematics \newline \indent
Stony Brook University \newline \indent
Stony Brook, NY 11794-3651}
\newcommand{\noi}{\noindent}
\newcommand{\cd}{{\mathcal D}}
\newcommand{\co}{{\mathcal O}}
\newcommand{\sa}{{\mathscr A}}
\newcommand{\sB}{{\mathscr B}}
\newcommand{\sd}{{\mathscr D}}
\newcommand{\sg}{{\mathscr G}}
\newcommand{\sh}{{\mathscr H}}
\newcommand{\sr}{{\mathscr R}}
\newcommand{\ve}{\varepsilon}
\newcommand{\vp}{\varphi} 
\newcommand{\B}{{\mathbb B}}
\newcommand{\C}{{\mathbb C}}
\newcommand{\D}{{\mathbb D}}
\newcommand{\p}{{\mathbb P}}
\newcommand{\R}{{\mathbb R}}
\newcommand{\Z}{{\mathbb Z}}
\newcommand{\red}{\hfill $\diamond$}
\newcommand{\di}{\partial}
\newcommand{\dbar}{\bar \partial}
\newcommand{\re}{{\rm Re\ }}
\newcommand{\relcomp}{\subset \subset}
\newcommand{\gradbar}{{\rm grad}^{0,1}}
\newcommand{\ii}{\sqrt{-1}}
\newcommand{\emb}{\hookrightarrow}
\newcommand{\tensor}{\otimes}
\def\XXint#1#2#3{{\setbox0=\hbox{$#1{#2#3}{\int}$} 
\vcenter{\hbox{$#2#3$}}\kern-.5\wd0}}
\begin{document}

\theoremstyle{plain}
\newtheorem{thm}{\sc Theorem}
\newtheorem*{s-thm}{\sc Theorem}
\newtheorem{lem}{\sc Lemma}[subsection]
\newtheorem{d-thm}[lem]{\sc Theorem}
\newtheorem{prop}[lem]{\sc Proposition}
\newtheorem{cor}[lem]{\sc Corollary}

\theoremstyle{definition}
\newtheorem{conj}[lem]{\sc Conjecture}
\newtheorem{prob}[lem]{\sc Open Problem}
\newtheorem{defn}[lem]{\sc Definition}
\newtheorem{qn}[lem]{\sc Question}
\newtheorem{ex}[lem]{\sc Example}
\newtheorem{rmk}[lem]{\sc Remark}
\newtheorem{rmks}[lem]{\sc Remarks}
\newtheorem*{ack}{\sc Acknowledgment}
\newtheorem*{s-rmk}{\sc Remark}




\maketitle

\setcounter{tocdepth}1



\vskip .3in



\begin{abstract}
This is a survey article about $L^2$ estimates for the $\bar \partial$ operator.  After a review of the basic approach that has come to be called the ``Bochner-Kodaira Technique", the focus is on twisted techniques and their applications to estimates for $\bar \partial$, to $L^2$ extension theorems, and to other problems in complex analysis and geometry including invariant metric estimates and the $\dbar$-Neumann Problem.
\end{abstract}

\section*{Introduction}

Holomorphic functions of several complex variables can be defined as those functions that satisfy the so-called Cauchy-Riemann equations.  Because the Cauchy-Riemann equations form an elliptic system, holomorphic functions are subject to various kinds of rigidity, both local and global. The simplest striking example of this rigidity is the identity principle, which states that a holomorphic function is completely determined by its values on {\it any} open subset.  Another simple yet profound rigidity is manifest in the maximum principle, which states that a holomorphic function that assumes an interior local maximum on a connected open subset of $\C ^n$ must be constant on that subset.  As a consequence, there are no non-constant holomorphic functions on any compact connected complex manifold.  A third striking kind of rigidity, present only when the complex dimension of the underlying space is at least 2, is the Hartogs Phenomenon: the simplest example of this phenomenon occurs if $\Omega$ is the complement of a compact subset of a Euclidean ball $B$, in which case the Hartogs Phenomenon is that any holomorphic function on $\Omega$ is the restriction of a unique holomorphic function in $B$.

Because of their rigidity, holomorphic functions with specified geometric properties are often hard to construct.  A fundamental technique used  in their construction is the solution of the inhomogeneous Cauchy-Riemann equations with estimates.  If the right sorts of estimates are available, one can construct a smooth function with the desired property (and this is often easy to do), and then correct this smooth function to be holomorphic by adding to it an appropriate solution of a certain inhomogeneous Cauchy-Riemann equation.

While the most natural estimates one might want are uniform estimates, they are often very hard or impossible to obtain.  On the other hand, it turns out that $L^2$ estimates are often much easier, or at least possible, to obtain if certain geometric conditions underlying the problem are satisfied.

Close cousins of holomorphic functions are harmonic functions, which have been the subjects of extensive study in complex analysis since its birth.  Not long after the concept of manifolds became part of the mathematical psyche (and to some extent even earlier), mathematicians began to extract geometric information from the behavior of harmonic functions and differential forms.  In the 1940s, Bochner introduced his technique for getting topological information from the behavior of harmonic functions.  Around the same time, Hodge began to extract topological information about algebraic varieties from Bochner's ideas.  In the hands of Kodaira, the Bochner Technique saw incredible applications to algebraic geometry, including the celebrated  Kodaira Embedding Theorem.  

About twenty years earlier, mathematicians began to study holomorphic functions from another angle, more akin to Perron's work in one complex variable.  Among the most notable of these was Oka, who realized that plurisubharmonic functions were fundamental tools in bringing out the properties of holomorphic functions and the natural spaces on which they occur.  

In the late 1950s and early 1960s, the approaches of Bochner-Kodaira and Oka began to merge into a single, and very deep approach based in partial differential equations.  The theory was initiated by Schiffer and Spencer, who were working on Riemann surfaces.  Spencer defined the $\dbar$-Neumann problem, and intensive research by Andreotti and Vesentini, H\"ormander, Morrey, Kohn, and others began to take hold.  It was Kohn who finally formulated and solved the $\dbar$-Neumann problem on strictly pseudoconvex domains, after a crucial piece of work by Morrey.  Kohn's work, which should be viewed as the starting point for the Hodge Theorem on manifolds with boundary, provided $L^2$ estimates and regularity up to the boundary for the solution of the Cauchy-Riemann equations having minimal $L^2$ norm.  In the opinion of the authors, Kohn's work is one of the incredible achievements in twentieth century mathematics. 

Shortly after Kohn's work, H\"ormander and Andreotti-Vesentini, independently and almost simultaneously,  obtained weighted $L^2$ estimates for the inhomogeneous Cauchy-Riemann equations.  The harmonic theory of Bochner-Kodaira and the plurisubharmonic theory of Oka fit perfectly into the setting of weighted $L^2$ estimates for the inhomogeneous Cauchy-Riemann equations.  The theorem also gave a way of getting interesting information about the Bergman projection: the integral operator that orthogonally projects $L^2$ functions onto the subspace of holomorphic $L^2$ functions.

The applications of the Andreotti-H\"ormander-Vesentini Theorem were fast and numerous.  There is no way we could mention all of them here, 
and neither is it our intention to do so.  Our story, though it will have some elements from this era, begins a little later.

In the mid 1980s, Donnelly and Fefferman discovered a technique that allowed an important improvement of the Bochner-Kodaira technique, which we call "twisting".  Ohsawa and Takegoshi adapted this technique to prove a powerful and general extension theorem for holomorphic functions.  Extension is a way of constructing holomorphic functions by induction on dimension.  The $L^2$ extension theorem, as well as the twisted technique directly, has been used in a number of important problems in complex analysis and geometry, but it is our feeling that there are many more applications to be had.

Thus we come to the purpose of this article: it is meant to lie somewhere between a survey and a lecture on past work, both by us and others. 

The paper is split into two parts.  In the first part, we shall explain the Bochner-Kodaira-Morrey-Kohn-H\"ormander technique, and its twisted analogue.  In the second, considerably longer part, we shall demonstrate some of the applications that these techniques have had.  We shall discuss improvements to the H\"ormander theorem, $L^2$ extension theorems,   invariant metric estimates, and applications to the $\dbar$-Neumann problem on domains that are not necessarily strictly pseudoconvex (though they are still somewhat restricted, depending on what one proving).  We do not provide all proofs, but where we do not prove something, we provide a reference.

There are many topics that have been omitted, but which could have naturally been included here.  We chose to focus on the analytic techniques that lie behind these results, with the goal of equipping a reader with the understanding needed to easily learn about these topics from the original papers, and to apply these techniques to other problems.

After this paper was written, we received the interesting expository paper \cite{blocki-BMS}, which has some overlap with our article. In both papers, the aim is to explain $L^2$ estimates on $\dbar$ that have been derived after the work of
Andreotti-H\"ormander-Vesentini and to apply them to certain problems. But the differences between B\l ocki's and our paper are significant. We discuss the basic apparatus of the $\dbar$ estimates in differential-geometric terms, which allows us to present
the estimates on $(p,q)$-forms values in holomorphic bundles over domains in K\" ahler manifolds starting in subsection \ref{SS:kahler}, and continuing thereafter. This generality of set-up yields $L^2$ extension theorems of Ohsawa-Takegoshi type of much wider applicability in Section \ref{S:extension}, e.g., in subsection \ref{SS:denominators}.  There is no doubt, however, that such a level of generality could also have been achieved in \cite{blocki-BMS}, had the author wanted to include it.  A more significant difference between the two papers is conceptual: we view the idea of twisting the $\dbar$ complex as a basic method, one that has led to new $L^2$ estimates and provides a framework to obtain additional estimates. The point of view taken in B\l ocki's paper is that the H\"ormander (or more correctly, Skoda) estimate is primary and can be manipulated, ex post facto, to yield new inequalities.  B\l ocki writes that our methods are "more complicated", but we do not agree with this characterization, which seems to be a matter of taste.  Indeed, the methods we use are equal in complexity to the methods needed to prove H\"ormander-Skoda's estimates, so it seems to us that any mathematician who wants to understand the entire picture will find no greater economy in one or the other approaches. 

While it is certain that both approaches have their pedagogical benefits, we wanted to highlight how our approach shows that certain curvature conditions lead directly to useful new estimates.  The path from the curvature hypotheses to the estimates that uses H\"ormander-Skoda's Theorem as a black box seemed to us to be more ad hoc.  We also give an extended discussion in subsection \ref{SS:twist_more} about how the new inequalities -- whether derived via twisting or by manipulating H\"ormander's inequality -- do give estimates that are genuinely stronger than H\" ormander's basic estimate alone. In terms of applications, aside from our homage to B\l ocki's and others' work on the resolution of the Suita conjecture (cf. Section \ref{S:optimal_constants})  the overlap of the two papers is only around the basic Ohsawa-Takegoshi extension theorem. B\l ocki's very interesting applications go more in the potential theory direction -- singularities of plurisbharmonic functions and the pluricomplex Green's function -- while ours go more towards the $\dbar$-Neumann problem -- compactness and subelliptic estimates, and pointwise estimates on the Bergman metric. There is no doubt in our mind that the present article and B\l ocki's paper supplement each other, providing both different conceptual points of view and different applications.

\begin{ack}
The authors are grateful to the many mathematicians whose work is surveyed and discussed in the present article.  Many of these friends are active researchers with whom one or both of us has interacted on many occasions, and we have learned much from them over the years.  We are especially grateful to Bo Berndtsson, Dave Catlin, Jean-Pierre Demailly, Charles Fefferman, Joe Kohn, Takeo Ohsawa, and Yum-Tong Siu, who have been inspiring to us throughout our career, and from whom we have learned immeasurably much.
\red
\end{ack}

\newpage

\begin{center}
{\large {\sc Part I.  The Basic Identity and Estimate, and its Twisted Relatives}}
\end{center}

\section{The Bochner-Kodaira-Morrey-Kohn Identity}

We begin by recalling an important identity for the $\dbar$-Laplace Beltrami operator.  We shall first state it in the simplest case of $(p,q)$-forms on domains in $\C ^n$, and then for the most general case of $(p,q)$-forms with values in a holomorphic vector bundle over a domain in a K\"ahler manifold.

\subsection{\bf Domains in $\C ^n$}\label{domains-par}

Let us begin with the simplest situation of a bounded domain $\Omega \relcomp \C ^n$.  We assume that $\Omega$ has a smooth boundary $\di \Omega$ that is a real hypersurface in $\C ^n$.  We fix a function $\rho$ in a neighborhood $U$ of $\di \Omega$ such that 
\[
U \cap \Omega = \{ z\in \C ^n\ ;\ \rho (z) < 0\}, \quad \di \Omega = \{ z\in \C ^n \ ;\ \rho (z) = 0\} \quad \text{and} \quad |\di \rho|\equiv 1 \text{ on } \di \Omega.
\]
(A function satisfying the first two conditions is called a {\it defining function} for $\Omega$, and a defining function normalized by the third condition is called a {\it Levi defining function}.)  Suppose also that $e^{-\vp}$ is a smooth weight function on $\Omega$.  

Here and below, we use the standard summation convention on $(p,q)$-forms, which means we sum over repeated upper and lower indices (of the same type).  We employ the usual multi-index notation
\[
dz^I = dz^{i_1} \wedge ... \wedge dz^{i_p} \quad \text{and} \quad d\bar z ^J = d\bar z^{j_1} \wedge ... \wedge d\bar z^{j_q},
\]
and write
\[
\alpha ^{I \bar J} = \alpha _{K\bar L} \delta ^{i_1\bar \ell _1} \cdots \delta ^{i_p\bar \ell _p} \delta ^{k_1\bar j_1}\cdots \delta ^{k_q\bar j_q}.
\]
We define 
\[
\left < \alpha , \beta \right > := \alpha _{I\bar J} \overline{\beta ^{J\bar I}} \quad \text{and} \quad |\alpha|^2 = \alpha _{I\bar J} \overline{\alpha ^{J \bar I}}.
\]

Using the pointwise inner product $\left < \cdot , \cdot \right >$ and the weight function $e^{-\vp}$, we can define an inner product on the space of smooth $(p,q)$-forms by the formula 
\[
(\alpha ,\beta )_{\vp} := \int _{\Omega} \left < \alpha , \beta \right > e^{-\vp} dV.
\]
We define $L^2_{(p,q)}(\Omega, e^{-\vp})$ to be the Hilbert space closure of the set of all smooth $(p,q)$-forms $\alpha = \alpha _{I\bar J} dz^I \wedge d\bar z ^J$ on a neighborhood of $\Omega$.  As usual, these spaces consist of $(p,q)$-forms with coefficients that are square-integrable on $\Omega$ with respect to the measure $e^{-\vp}dV$.

On a smooth $(p,q)$-form one has the so-called $\dbar$ operator (also called the Cauchy-Riemann operator) defined by 
\[
\dbar \alpha = \frac{\di \alpha _{I\bar J}}{\di \bar z ^k} d\bar z^k \wedge dz^I \wedge d\bar z ^J = (-1)^p  \frac{\di \alpha _{I\bar J}}{\di \bar z ^k} dz^I \wedge d\bar z^k \wedge d\bar z ^J = (-1)^p \ve _{\bar K}^{\bar k \bar J} \frac{\di \alpha _{I\bar J}}{\di \bar z ^k} dz^I \wedge d\bar z^K,
\]
where $\ve ^M_N$ denotes the sign of the Permutation taking $M$ to $N$.  Evidently $\dbar$ maps smooth $(p,q)$-forms to smooth $(p,q+1)$-forms and satisfies the compatibility condition $\dbar ^2 = 0$.

We can now define the formal adjoint $\dbar ^*_{\vp}$ of $\dbar$ as follows:  if $\alpha$ is a smooth $(p,q)$-form on $\Omega$, then the formal adjoint satisfies 
\[
(\dbar ^*_{\vp}\alpha , \beta) = (\alpha ,\dbar \beta)
\]
for all smooth $(p,q-1)$-forms $\beta$ with compact support in $\Omega$.  A simple integration-by-parts argument shows that 
\begin{equation}\label{formal-adjoint-cn}
(\dbar ^*_{\vp} \alpha)_{I\bar J} = (-1)^{p-1} e^{\vp} \delta ^{k\bar j}\frac{\di}{\di z^{k}}\left ( e^{-\vp}\alpha _{I\overline{j J}}\right ).
\end{equation}
If $\alpha$ is a smooth form, one can directly compute that
\begin{equation}\label{formal-bk-cn}
(\dbar \dbar ^*_{\vp} + \dbar ^*_{\vp}\dbar )\alpha =  \delta ^{i \bar j}\bar \nabla ^* _i \bar \nabla _j \alpha +  \sum _{k=1} ^q \delta ^{i \bar s}\frac{\di ^2 \vp}{\di z^{i}\di \bar z ^{j_k}} \alpha _{I \bar j_1...(\bar s )_{k}...\bar j_q},
\end{equation}
where $\bar \nabla _{j} = \frac{\di}{\di \bar z ^j}$ and $\bar \nabla ^*$ is the formal adjoint of $\bar \nabla$.  

\begin{rmk}
The geometric meaning of the important formal identity \eqref{formal-bk-cn} will be expanded on in the next paragraph.  For the time being, we would like to make the following comment.  The tensor $\alpha$ can either be thought of as a differential form, or a section of the vector bundle $\Lambda ^{p,q}_{\Omega} \to \Omega$.  (We view both of these bundles as having a non-trivial Hermitian metric induced by the weight $e^{-\vp}$.)  As a differential form, one can act on it with the $\dbar$-Laplace Beltrami operator $\dbar \dbar ^* + \dbar ^*\dbar$, but as a section of $\Lambda ^{p,q}_{\Omega}$, one has to use the "covariant $\dbar$ operator $\bar \nabla$, because the latter bundle is not holomorphic and therefore doesn't have a canonical choice of $\dbar$ operator.  The main thrust of the formula \eqref{formal-bk-cn} is that these two, a priori nonnegative $\dbar$-Laplace operators are related by the complex Hessian of $\vp$ (as it acts on $(p,q)$-forms).
\red
\end{rmk}

If in \eqref{formal-bk-cn} we assume $\alpha$ has compact support in $\Omega$, then taking inner product with $\alpha$ and integrating-by-parts yields
\begin{equation}\label{formal-bk-cn-integrated}
||\dbar ^*_{\vp}\alpha||^2_{\vp} + ||\dbar \alpha ||^2_{\vp} = \int _{\Omega} |\bar \nabla \alpha|^2 e^{-\vp} dV + \int _{\Omega}\sum _{k=1} ^q \delta ^{i \bar s}\frac{\di ^2 \vp}{\di z^{i}\di \bar z ^{j_k}} \alpha _{I \bar j_1...(\bar s )_{k}...\bar j_q} \overline{\alpha ^{\bar I j_1 ... j_k ... j_q}} e^{-\vp} dV.
\end{equation}
In particular, if there is a constant $c>0$ such that 
\[
\frac{\di ^2\vp}{\di z^i \di \bar z ^j} \ge c \delta _{i\bar j},
\]
then we get the inequality 
\begin{equation}\label{basic-est-not}
||\dbar ^*_{\vp}\alpha||^2_{\vp} + ||\dbar \alpha ||^2_{\vp} \ge c ||\alpha||^2
\end{equation}
for all smooth $(p,q)$-forms $\alpha$ with compact support.  But since smooth forms with compact support do not form a dense subset of $L^2_{(p,q)}(\Omega , e^{-\vp})$ with respect to the so-called {\it graph norm} $||\alpha||+||D\alpha||$, we cannot take advantage of this estimate.  In fact, the estimate is not true without additional assumptions on $\Omega$. 

To clarify the situation, we must develop the theory of the $\dbar$ operator a little further, and in particular, extend it to a significantly larger subset of the space $L^2_{(p,q)}(\Omega , e^{-\vp})$.  Such a development requires some of the theory of unbounded operators and their adjoints, which we now outline in the case of $\dbar$.

The operator $\dbar$ is extended to $L^2_{(p,q)}(\Omega , e^{-\vp})$ as follows.  First, it is considered as an operator in the sense of currents.  But as such, the image of $L^2_{(p,q)}(\Omega , e^{-\vp})$ is a set of currents that properly contains $L^2_{(p,q+1)}(\Omega , e^{-\vp})$.  We therefore limit the domain of $\dbar$ by defining 
\[
{\rm Domain}(\dbar) := \{ \alpha \in L^2_{(p,q)}(\Omega , e^{-\vp})\ ;\ \dbar \alpha \in L^2_{(p,q+1)}(\Omega , e^{-\vp})\}.
\]
Of course, this Hilbert space operator extending $\dbar$ (which we continue to denote by $\dbar$) is not bounded on $L^2_{(p,q)}(\Omega , e^{-\vp})$, but nevertheless it has two important properties.
\begin{enumerate}
\item[(i)] It is densely defined: indeed, ${\rm Domain}(\dbar)$ is a dense subset of $L^2_{(p,q)}(\Omega , e^{-\vp})$ since it contains all the smooth forms on a neighborhood of $\Omega$.
\item[(ii)] It is a closed operator, i.e., the Graph  $\{ (\alpha , \dbar \alpha )\ ;\ \alpha \in {\rm Domain}(\dbar)\}$ of $\dbar$, is a closed subset of $L^2_{(p,q)}(\Omega , e^{-\vp}) \times L^2_{(p,q+1)}(\Omega , e^{-\vp})$.
\end{enumerate}
Much of the theory of bounded operators extends to the class of closed, densely defined operators.  For example, the Hilbert space adjoint of a closed, densely defined operator is itself a closed densely defined operator.

Though we will not define the Hilbert space adjoint of a closed densely defined operator in general, the definition should be clear from what we do for $\dbar$.  First, one defines the domain of $\dbar ^*_{\vp}$ to be 
\[
{\rm Domain}(\dbar ^*_{\vp}) := \{ \alpha \in L^2_{(p,q)}(\Omega , e^{-\vp})\ ;\ \ell _{\alpha}: \beta \mapsto (\dbar \beta , \alpha)_{\vp} \text{ is  bounded on }{\rm Domain}(\dbar)\}.
\]
Since ${\rm Domain}(\dbar)$ is dense in $L^2_{(p,q-1)}(\Omega , e^{-\vp})$, the linear functional $\ell _{\alpha}$ extends to a unique element of $L^2_{(p,q-1)}(\Omega , e^{-\vp})^*$.  By the Riesz Representation Theorem there is a unique $\gamma_{\alpha} \in L^2_{(p,q-1)}(\Omega , e^{-\vp})$ such that 
\[
(\beta , \gamma _{\alpha})_{\vp} = (\dbar \beta , \alpha)_{\vp}.
\]
We then define 
\[
\dbar ^*_{\vp} \alpha := \gamma _{\alpha}.
\]

A natural and important problem that arises is to characterize those smooth forms on a neighborhood of $\Omega$ that are in the domain of the Hilbert space adjoint $\dbar ^*_{\vp}$.  If we take such a smooth form $\alpha$, then it is in the domain of $\dbar ^*_{\vp}$ if and only if
\[
(\alpha , \dbar \beta)_{\vp} = (\dbar ^*_{\vp} \alpha , \beta)_{\vp}
\]
for all smooth forms $\beta$.  Indeed, since compactly supported $\beta$ are dense in $L^2_{(p,q)}(\Omega, e^{-\vp})$, the Hilbert space adjoint must act on $\alpha$ in the same way as the formal adjoint.  On the other hand, if $\beta$ does not have compact support, integration-by-parts yields 
\[
(\alpha , \dbar \beta)_{\vp} = (\dbar ^*_{\vp} \alpha , \beta)_{\vp} + \frac{(-1) ^{p-1}}{p!(q-1)!} \int _{\di \Omega} \delta ^{s\bar t} \frac{\di \rho }{\di z^s} \alpha _{I \bar t \bar J}\overline{\beta ^{I\bar J}}e^{-\vp} dS_{\di \Omega}.
\]
It follows that a smooth form $\alpha$ is in the domain of $\dbar ^*_{\vp}$ if and only if 
\begin{equation}\label{dbar-Neumann-bc}
\delta ^{s\bar t} \frac{\di \rho }{\di z^s} \alpha _{I \bar t \bar J} \equiv 0 \text{ on }\di \Omega.
\end{equation}

\begin{defn}
The boundary condition \eqref{dbar-Neumann-bc} is called the $\dbar$-Neumann boundary condition.  
\red
\end{defn}

For smooth forms satisfying the $\dbar$-Neumann boundary condition, the identity \eqref{formal-bk-cn-integrated} generalizes as the following theorem, proved by C.B. Morrey for $(0,1)$-forms, and generalized to $(p,q)$-forms by J.J. Kohn.

\begin{d-thm}\label{bk-thm-cn}
Let $\Omega$ be a domain with smooth real codimension-$1$ boundary with Levi defining function $\rho$, and let $e^{-\vp}$ be a smooth weight function.  Then for any smooth $(p,q)$-form $\alpha$ in the domain of $\dbar ^*_{\vp}$, one has the so-called Bochner-Kodaira-Morrey-Kohn identity
\begin{eqnarray} \label{bk-cn}
||\dbar ^*_{\vp}\alpha||^2_{\vp} + ||\dbar \alpha ||^2_{\vp} &=& \int _{\Omega} \sum _{k=1} ^q \delta ^{i\bar s} \frac{\di ^2\vp}{\di z^i \di \bar z ^{j_k}} \alpha _{I\bar j_1...(\bar s)_k...\bar j_q} \overline{\alpha ^{\bar I j_1...j_k...j_q}}e^{-\vp} dV +\int _{\Omega} |\bar \nabla \alpha|^2e^{-\vp} dV \\
\nonumber && \quad  + \int _{\di \Omega} \sum _{k=1} ^q \delta ^{i\bar s} \frac{\di ^2\rho}{\di z^i \di \bar z ^{j_k}} \alpha _{I\bar j_1...(\bar s)_k...\bar j_q} \overline{\alpha ^{\bar I j_1...j_k...j_q}}e^{-\vp} dS_{\di \Omega}.
\end{eqnarray}
\end{d-thm}

In addition to the Bochner-Kodaira-Morrey-Kohn identity, we need two pieces of information.  The first is the following theorem.

\begin{d-thm}[Graph norm density of smooth forms]\label{friedrichs}
The smooth forms in the domain of $\dbar ^*_{\vp}$ are dense in the space ${\rm Domain}(\dbar) \cap {\rm Domain}(\dbar ^*_{\vp})$ with respect to the graph norm 
\[
|||\alpha|||_{\vp} := ||\alpha||_{\vp} + ||\dbar \alpha||_{\vp} +||\dbar ^*_{\vp}\alpha||_{\vp}.
\]
\end{d-thm}

\begin{s-rmk}
Theorem \ref{friedrichs} relies heavily on the smoothness of the weight function $\vp$.
\red
\end{s-rmk}

Our next goal is to obtain an estimate like \eqref{basic-est-not} from Theorem \ref{bk-thm-cn} under some assumption that the Hermitian matrix
\begin{equation}\label{pos-curv-flat}
\left (\frac{\di ^2\vp}{\di z^i \di \bar z ^j}\right );
\end{equation}
for example, that it is positive definite.  It is reasonable to believe (and not hard to verify) that the first term on the right hand side of \eqref{bk-cn} can be controlled by such an assumption, and the second term is clearly non-negative.  But the third term, namely the boundary integral, can present a problem.  And indeed, it is here that the $\dbar$ Neumann boundary condition enters for a second time, to indicate the definition of Pseudoconvexity.

To understand pseudoconvexity properly, it is useful to look again at the complex structure of $\C ^n$, thought of as a real $2n$-dimensional manifold.  If $z^1,...,z^n$ are the complex coordinates in $\C ^n$ and we write 
\[
z^i = x^i + \ii y^i, \quad 1 \le i \le n
\]
then multiplication by $\ii$ induces a real linear operator $J$ that acts on real tangent vectors by 
\[
J \frac{\di}{\di x^i} = \frac{\di}{\di y^i} \quad \text{and} \quad J \frac{\di}{\di y^i} = - \frac{\di}{\di x^i}, \qquad 1 \le i \le n.
\]
To diagonalize $J$, whose eigenvalues are $\pm \ii$ with equal multiplicity (as can be seen by the fact that complex conjugation commutes with $J$), we must complexify the real vector space $T_{\C^n}$, i.e., look at $T_{\C^n} ^{\C} = T_{\C^n} \tensor _{\R} \C$.  Then we have a decomposition 
\[
T^{\C}_{\C^n} \cong T^{1,0}_{\C^n} \oplus T^{0,1}_{\C^n}
\]
where, with $\frac{\di}{\di z^i} = \frac{1}{2} \left ( \frac{\di}{\di x^i} - \ii \frac{\di}{\di y^i}\right )$ and $\frac{\di}{\di \bar z^i} = \frac{1}{2} \left ( \frac{\di}{\di x^i} + \ii \frac{\di}{\di y^i}\right )$, 
\[
T^{1,0}_{\C^n} = {\rm Span}_{\C} \left \{ \frac{\di}{\di z ^1},...,\frac{\di}{\di z^n} \right \}  \quad \text{and} \quad T^{0,1}_{\C^n} = {\rm Span}_{\C} \left \{ \frac{\di}{\di \bar z ^1},...,\frac{\di}{\di \bar z^n} \right \} .
\]
The reader can check that $J v = \ii v$ for $v \in T^{1,0}_{\C^n}$ and $Jv = - \ii v$ for $v \in T^{0,1}_{\C^n}$.

Let us now turn our attention to real tangent vectors in $\C ^n$ that are also tangent to the boundary $\di \Omega$, i.e., vectors that are annihilated by $d\rho$.  In general, given such a vector $v$, $Jv$ will not be tangent to $\di \Omega$.  In fact, if we write 
\[
T^{1,0}_{\di \Omega} := T_{\di\Omega} \cap  J T_{\di \Omega},
\]
then a computation shows that 
\[
T^{1,0}_{\di \Omega} = {\rm Ker}(\di \rho).
\]

\begin{defn}
We say that (the boundary of) $\Omega$ is pseudoconvex if the Hermitian form $\ii \di \dbar \rho$ is positive semi-definite on $T^{1,0}_{\di \Omega}$.  If this form is positive definite, we say $\Omega$ is strictly pseudoconvex.
\red
\end{defn}

It is now evident that if a smooth $(p,q)$-form $\alpha$ satisfies the $\dbar$-Neumann boundary conditions, then it takes values in 
\[
\left . \Lambda ^{p,0}_{\Omega} \right |_{\di \Omega} \wedge \Lambda ^{0,q}_{\di \Omega},
\]
where
\[
\Lambda ^{0,q}_{\di \Omega} := \Lambda ^{0,q} (T^{1,0*}_{\di \Omega}) = \underbrace{\overline{T^{1,0*}_{\di \Omega}} \wedge ... \wedge \overline{T^{1,0*}_{\di \Omega}}}_{q \text{ times}}.
\]
That is to say, the $(1,0)$-covectors are unrestricted, but the $(0,1)$-covectors must be complex-tangent to the boundary.

Putting this all together, we have the following theorem.

\begin{d-thm}[Basic Estimate]\label{basic-est-cn}
Let $\Omega$ be a domain in $\C ^n$ with pseudoconvex boundary, and let $\vp$ be a function on $\Omega$ such that the Hermitian matrix \eqref{pos-curv-flat} is uniformly positive definite at each point of $\Omega$.  Then for all $(p,q)$-forms $\alpha \in {\rm Domain}(\dbar) \cap {\rm Domain}(\dbar ^*_{\vp})$, we have the estimate 
\[
||\dbar \alpha||^2_{\vp} + ||\dbar ^*_{\vp}\alpha||^2_{\vp} \ge C ||\alpha||^2_{\vp},
\]
where $C$ is the smallest eigenvalue of \eqref{pos-curv-flat}.
\end{d-thm}

\begin{s-rmk}
The hypothesis of pseudoconvexity of $\di \Omega$ and positive definiteness of the complex Hessian of $\vp$ in Theorem \ref{basic-est-cn} is sharp when $q=1$, but can be improved when $q \ge 2$.  We will clarify this point in the next paragraph, when we look at the notion of positivity of the action induced by a Hermitian form on a vector space $V$ on the space of $(p,q)$-multilinear forms on $V$.
\red
\end{s-rmk}

\subsection{A remark:  Some simplifying algebra and geometry}

There are some geometric and algebraic insights that can make the identity \eqref{bk-cn} easier to digest.  These ideas will also make a simple transition to the geometric picture in the next paragraph.

The first, and simpler, issue, is to understand the action of the Hessian of a function on forms.  To have a coordinate-free definition of this action, we use multilinear algebra.  If we have a Hermitian form $\sh$ on a finite-dimensional Hermitian vector space $V$ of complex dimension $n$ and with "background" positive definite Hermitian form $\sa$, then the form $\sh$ acts on vector spaces obtained from $V$ by multilinear operations.  The only case we are interested in here is the case $E \tensor \Lambda ^{0,q}(V^*)$, where $E$ is a vector space on which $\sh$ acts trivially, and 
\[
\Lambda ^{0,q}(V^*) := \underbrace{\bar V^* \wedge ... \wedge \bar V^*}_{q\text{ times}},
\]
where $V^*$ is the dual vector space of $V$.  (Here with think of a Hermitian form as an element $\sB \in \Lambda ^{1,1}(V^*)$ satisfying 
\[
\sB = \overline{\sB}  \quad \text{and} \quad \left < \sB , \ii x\wedge \bar x \right > \in \R \quad \text{ for all } x \in V.
\]

Let us examine how the Hermitian form $\sh$ acts on $E \tensor \Lambda ^{0,q}(V^*)$ relative to the positive definite Hermitian form $\sa$.  First, we diagonalize $\sh$ on $V$ relative to $\sa$.  That is to say, there exist real numbers $\lambda _1\le ... \le \lambda _r$ and independent vectors $v_1,...,v_r \in V$ (where $r = \dim_{\C}(V)$) such that 
\[
\sh (v_i, v_i) = \lambda _i \sa (v_i,v_i).
\]
We can choose $v_i$ such that $\sa (v_i,v_i)=2$ for all $i$, as we assume from now on.  

\begin{defn}
We say that a Hermitian form $\sh$ is $q$-positive with respect to a background Hermitian form $\sa$ if the sum of any $q$ eigenvalues of $\sh$ with respect to $\sa$, counting multiplicity, is non-negative.  If the sum is positive, we say $\sh$ is strictly $q$-positive with respect to $\sa$.
\red
\end{defn}

Let us denote by $\alpha ^1, ..., \alpha ^r$ the basis of $V^*$ dual to $v_1,...,v_r$, i.e., $\left <v_i ,\alpha ^j\right > = \delta _i ^j$.  Writing 
\[
\alpha^{\bar J} = \bar  \alpha ^{j_1} \wedge ... \wedge \bar \alpha^{j_q},
\]
we define 
\[
\{ \sh \} e \tensor \alpha ^{\bar J} = \{ \sh \} _{\sa} e \tensor \alpha ^{\bar J} : =  (\lambda _{j_1} + ... + \lambda _{j_q}) e \tensor \alpha ^{\bar J},
\]
where $J= (j_1,...,j_q)$, and extend the action to $E \tensor \Lambda ^{0,q}(V^*)$ by linearity.

This definition of the action of $\sh$, applied to the case $V = T^{0,1}_{\Omega, z}$ or $V= T^{0,1}_{\di \Omega, z}$, and the Hermitian form $\sh = \ii \di \dbar \vp$ or $\sh = \ii \di \dbar \rho$ restricted to $T^{0,1}_{\di \Omega, z}$ respectively, shows us that in fact, the psuedoconvexity and positive definiteness hypotheses in Theorem \ref{basic-est-cn} can be replaced by the weaker assumptions of positivity of the sum of the $q$ smallest eigenvalues of $\di \dbar \vp$ (and of $\di \dbar \rho$ restricted to $T^{1,0}_{\di \Omega}$).

\begin{s-rmk}
Of course, $q$-positivity holds if and only if the sum of the $q$ smallest eigenvalues, counting multiplicity, is positive.  Thus 
\begin{enumerate}
\item[(i)]$\sh$ is $1$-positive if and only if $\sh$ is positive definite,
\item[(ii)] positive-definiteness is a stronger condition than $q$-positivity for $q \ge 2$, and 
\item[(iii)] the notion of $1$-positive is independent of the background form $\sa$, but, as soon as $q \ge 2$, $q$-positivity is not independent of $\sa$. 
\red
\end{enumerate}
\end{s-rmk}

The multi-linear algebra just discussed gives us a way to understand the Hermitian geometry of the terms in the basic identity on domains in $\C^n$, and our next task is to import this understanding to more general domains in K\"ahler manifolds, and $(p,q)$-forms with values in a holomorphic vector bundle.  To accomplish this task, we will need a notion of $\dbar$ for such sections, and also of curvature of vector bundle metrics.

Given a holomorphic vector bundle $E \to X$ or rank $r$ over a complex manifold $X$, one can associate to $E$ a $\dbar$-operator, defined as follows.  If $e_1, ... ,e_r$ is a holomorphic frame for $E$ over some open subset $U$ of $X$, then any (not necessarily holomorphic) section $f$ of $E$ can be written over $U$ as 
\[
f = f^ie_i
\]
for some functions $f^1,...,f^r$ on $U$.  We then define 
\[
\dbar f := (\dbar f^i) \tensor e_i.
\]
Because a change of frame occurs by applying an invertible matrix of holomorphic functions, the operator $\dbar$ is well-defined, and maps sections of $E$ to sections of $T^{0,1*}_X \tensor E$.

In general, we may wish to differentiate sections of $E$.  The way to do so is through a choice of a connection, i.e., a map 
\[
\nabla : \Gamma (X, E) \to \Gamma (X, T^{\C}_X\tensor E),
\]
where $T_X^{\C} := T_X\tensor _{\R} \C$ is the complexified tangent space.  In general, there is no natural choice of connection, but if the vector bundle has some additional structure, we are able to narrow down the choices significantly.  For example, if $E$ is a holomorphic vector bundle, then in view of the decomposition $T^{\C}_X = T^{1,0}_X \tensor T^{0,1}_X$ into $\C$-linear and $\overline{\C}$-linear components, we can split any connection as 
\[
\nabla = \nabla ^{1,0} +\nabla ^{0,1}.  
\]
We can then ask that $\nabla ^{0,1}= \dbar$.  If $E$ also equipped with a Hermitian metric, we can ask that our connection be compatible with the Hermitian metric in the following sense:
\[
d\left < f, g\right > (\xi) = \left < \nabla _{\xi} f , g \right > + \left < f,\nabla _{\xi}g\right>, \qquad \xi \in T_X.
\]
A connection $\nabla$ that is compatible with the metric of $E\to X$ and satisfies $\nabla ^{0,1}=\dbar$ is called a {\it Chern connection}.  The following theorem is sometimes called the {\it Fundamental Theorem of Holomorphic Hermitian Geometry}.

\begin{d-thm}
Any holomorphic Hermitian vector bundle admits a unique Chern connection.
\end{d-thm}

\begin{rmk}
The Chern connection is in particular uniquely determined for the Hermitian vector bundle $T^{1,0}_X$, the $(1,0)$-tangent bundle of a Hermitian manifold $(X,\omega)$.  Now, the assignment of a $(1,0)$-vector $\xi$ to twice its real part gives an isomorphism of $T^{1,0}_X$ with $T_X$, and this isomorphism associates to a Hermitian metric the underlying Riemannian metric.  Thus we obtain a second canonical connection for $T^{1,0}_X$, namely the Levi-\v Civita connection.  In general, these two connections are different.  They coincide if and only if the underlying Hermitian manifold is K\"ahler.
\red
\end{rmk}

We can also discuss the notion of the curvature of a connection.  Indeed, thinking of $\nabla$ is an exterior derivative, we see that in general $\nabla \nabla \neq 0$.  The miracle of geometry is that the next best thing happens:  $\nabla \nabla$ is a $0^{\text th}$ order differential operator, and this operator is called the curvature of the connection $\nabla$.  In general, locally the curvature is given, in terms of a frame for $E$, by a matrix whose coefficients are differential $2$-forms with values in $E$:  
\[
\Theta (\nabla) := \nabla \nabla : \Gamma (X, E) \to \Gamma (X, E \tensor \Lambda ^2T_X).
\]
In the case of the Chern connection, we have 
\[
\nabla ^2 = (\nabla ^{1,0})^2 + \nabla ^{1,0} \dbar + \dbar \nabla ^{1,0} + \dbar ^2  = (\nabla ^{1,0})^2 + \nabla ^{1,0} \dbar + \dbar \nabla ^{1,0},
\]
but because the Chern connection preserves the metric (i.e., it is Hermitian) we must have $(\nabla ^{1,0})^2=0$.  We therefore have the formula
\[
\Theta (\nabla) = [\nabla ^{1,0}, \dbar]
\]
for the Chern connection.  (Here, because we are using differential forms, the commutator is "graded" according to the degree of the forms.  If we choose two tangent $(1,0)$-vector fields $\xi$ and $\eta$, then we have 
\[
\Theta (\nabla) (\xi , \eta)= [\nabla ^{1,0}_{\xi}, \dbar _{\bar \eta}]
\]
where now the commutator is the usual one.)

The example of the trivial line bundle $p_1: \Omega \times \C\to \Omega$ (where $p_1$ is projection to the first factor) with non-trivial metric $e^{-\vp}$ is already interesting.  In this case, a section is the graph of a function $f :\Omega \to \C$, and the metric for the trivial line bundle is given by 
\[
\left < f, g\right > (z)  := f(z) \overline{g(z)} e^{-\vp(z)}, \quad z\in \Omega.
\]
The $\dbar$-operator just corresponds to the usual $\dbar$ operator on functions.  Imposing metric compatibility gives 
\[
d\left < f,g\right > = df \bar g e^{-\vp} + f \overline{dg} e^{-\vp} + f\bar g (-d\vp)e^{-\vp} = (\di f - \di \vp f +\dbar f)\bar g e^{-\vp} + f\overline{(\di g -\di \vp g - \dbar g)}e^{-\vp},
\]
so the Chern connection is given by 
\[
\nabla ^{1,0} f = \di f - f \di \vp.
\]
The curvature of this connection is 
\[
\Theta (\nabla) f = \di (\dbar f) - \di \vp \wedge \dbar f + \dbar (\di f - \di \vp f) = (\di \dbar +\dbar \di) f - \di \vp \wedge \dbar f - \dbar f \wedge \di \vp - f \dbar \di \vp= (\di \dbar \vp)f,
\]
i.e., multiplication by the complex Hessian of $\vp$.  This is precisely the Hermitian form in \eqref{formal-bk-cn-integrated} and \eqref{bk-cn}.  (The action of this Hermitian form on $(p,q)$-forms was discussed in the first part of this paragraph.)

Finally, let us begin to clarify the remarks we made earlier regarding $(p,q)$-forms.  We note that while $\Lambda ^{p,q}_X \to X$ is in general not a holomorphic vector bundle, it is indeed holomorphic when $q=0$.  Writing  
\[
\Lambda ^{p,q}_X = \Lambda ^{p,0}_X \tensor \Lambda ^{0,q}_X,
\]
we can therefore treat $(p,q)$-forms as $(0,q)$-forms with values in the holomorphic vector bundle $\Lambda ^{p,0}_X$.  More generally, if $E \to X$ is a holomorphic vector bundle, then we can treat $E$-valued $(p,q)$-forms as $E \tensor \Lambda ^{p,0}_X$-valued $(0,q)$-forms on $X$.  For this reason, it is often advantageous to assume $p=0$.  

\begin{s-rmk}
Interestingly, there is also an advantage to assuming $p=n$.  We shall return to this point after we write down a geometric generalization of Theorem \ref{bk-thm-cn}, which is our next task.
\red
\end{s-rmk}

\subsection{$(p,q)$-forms with values in a holomorphic vector bundle over domains in a K\"ahler manifold}\label{SS:kahler}

Let $(X,\omega)$ be a K\"ahler manifold and let $\Omega \relcomp X$ be an open set whose boundary is a possibly empty, smooth compact real hypersurface in $X$.  Let us write 
\[
dV_{\omega} := \frac{\omega ^n}{n!},
\]
where $n$ is the complex dimension of $X$.  We assume there is a smooth, real-valued function $\rho$ defined on a neighborhood $U$ of $\di \Omega$ in $X$ such that 
\[
\Omega = \{ z \in U\ ;\ \rho (z) < 0\}, \quad \di \Omega = \{ z \in U\ ;\ \rho (z) = 0\}
\quad \text{and} \quad |\di \rho |_{\omega} \equiv 1 \text{ on }\di \Omega.
\]
Suppose also that there is a holomorphic vector bundle $E \to \overline{\Omega}$ with (smooth\footnote{There are a few notions of singular Hermitian metrics for vector bundles of higher rank, but the theory of singular Hermitian metrics, while rather developed for line bundles, is much less developed in the higher rank case.}) Hermitian metric $h$.  With this data, we can define a pointwise inner product on $E$-valued $(0,q)$-forms, which we denote 
\[
\left <\alpha ,\beta \right > _{\omega, h}.
\]
To give this notion a more concrete meaning, let us choose a frame $e_1,...,e_r$ for $E$ and local coordinates $z$.  If we write 
\[
\alpha = \alpha ^i_{\bar J} e_i \tensor d\bar z^J \quad \text{and} \quad \alpha = \beta ^i_{\bar J} e_i \tensor d\bar z^J,
\]
as well as 
\[
h_{i\bar j} := h(e_i, e_j) \quad \text{and} \quad \omega _{i\bar j} = \omega (\tfrac{\di}{\di z^i}, \tfrac{\di}{\di \bar z ^j}),
\]
then 
\[
\left <\alpha ,\beta \right > _{\omega, h} = \alpha _{\bar I} ^i \overline{\beta ^j _{\bar J}} h_{i\bar j} \omega ^{\bar J I},
\]
where $\omega ^{\bar j i}$ is the inverse transpose of $\omega _{i\bar j}$ and $\omega ^{\bar J I} = \omega ^{\bar j_1 i_i} \cdots \omega ^{\bar j_q i_q}$.  This pointwise inner product is easily seen to be globally defined, and as in the case of domains in $\C^n$, it induces an $L^2$ inner product on smooth $E$-valued $(0,q)$-forms by the formula
\[
(\alpha , \beta )_{\omega , h} := \int _{\Omega} \left <\alpha ,\beta \right > _{\omega, h} dV_{\omega}.
\]
If we carry out the natural analogues of the ideas from Paragraph \ref{domains-par}, we are led to define the domains of $\dbar$ and\footnote{Although the definition of $\dbar ^*_h$ also depends on $\omega$, we omit this dependence from the notation to keep things manageable.} $\dbar ^*_{h}$, the latter being given on smooth $E$-valued $(0,q)$-forms by the formula
\begin{equation}\label{formal-adjoint-vb}
(\dbar ^*_h \alpha)^i_{\bar J} = - h^{i\bar \ell} \omega _{M\bar J} \frac{\di}{\di z^k} \left ( h_{m \bar \ell} \alpha ^m_{\bar j L}\omega ^{\bar j k}\omega ^{\bar L M} \det(\omega) \right )  .
\end{equation}
We also have the analogue of the $\dbar$-Neumann boundary condition 
\begin{equation}\label{dbar-Neumann-bc-gen}
\omega ^{s\bar t} \frac{\di \rho}{\di z^s} \alpha^i _{\bar t \bar J} \equiv 0 \quad \text{on }\di \Omega.
\end{equation}
The result for compactly supported forms (which in this setting can be useful if one is working on compact K\"ahler manifolds) goes through in the same way, as does the modification to manifolds with boundary introduced by Morrey and Kohn.  To keep things brief, we content ourselves with stating the theorem that results.

\begin{d-thm}[Bochner-Kodaira-Morrey-Kohn Identity]\label{bk-id-thm-gen}
Let $(X,\omega)$ be a K\"ahler manifold and $E \to X$ a holomorphic vector bundle with Hermitian metric $h$.  Denote by ${\rm Ricci}(\omega)$ the Ricci curvature of $\omega$, and by $\Theta (h)$ the curvature of the Chern connection for $E$.  Then for any smooth $E$-valued $(0,q)$-form $\alpha$ in the domain of $\dbar ^*_h$, i.e., satisfying the $\dbar$-Neumann boundary condition \eqref{dbar-Neumann-bc-gen}, one has the identity
\begin{eqnarray} \label{bk-gen}
||\dbar ^*_{h}\alpha||^2_{h,\omega} + ||\dbar \alpha ||^2_{h,\omega} &=& \int _{\Omega} \sum _{k=1} ^q \omega  ^{i\bar s} (\Theta (h) + {\rm Ricci}(\omega))_{i,\bar j_k} \alpha ^i_{\bar j_1...(\bar s)_k...\bar j_q} \overline{\alpha_{\bar i} ^{j_1...j_k...j_q}} dV_{\omega}  \\
\nonumber && \quad  +\int _{\Omega} |\bar \nabla \alpha|^2_{\omega ,h} dV_{\omega} \\
\nonumber && \qquad + \int _{\di \Omega} \sum _{k=1} ^q \omega ^{i\bar s} \frac{\di ^2\rho}{\di z^i \di \bar z ^{j_k}} \alpha^i _{\bar j_1...(\bar s)_k...\bar j_q} \overline{\alpha_i ^{j_1...j_k...j_q}} dS_{\omega, \di \Omega}.
\end{eqnarray}
\end{d-thm}

\begin{rmks}\label{ricci-rmk}
A couple of remarks are in order.  
\begin{enumerate}
\item[(i)]  Some of the indices look to be in the wrong place; they are superscripts when they should be subscripts, or vice versa.  This is the standard notation for contraction (which is also called {\it raising/lowering}) with the relevant metric.

\item[(ii)] Although the Ricci curvature of a Riemannian metric is a well-known quantity, the Ricci curvature of a K\"ahler metric is even simpler.  A K\"ahler form $\omega$ induces a volume form $dV_{\omega}$, which can be seen as a metric for the the anticanonical bundle 
\[
-K_X = \det T^{1,0}_X.
\]
The curvature of this metric is precisely the Ricci curvature of $\omega$.  It is therefore given by the formula 
\[
{\rm Ricci}(\omega) = - \ii \di \dbar \log \det (\omega _{i\bar j}).
\]
The reader can check that ${\rm Ricci}(\omega)$ is independent of the choice of local coordinates.
\red
\end{enumerate}
\end{rmks}

Finally, we come to the statement made at the end of the previous paragraph, regarding the convenience of using $(n,q)$-forms instead of $(0,q)$-forms.  An $E$-valued $(n,q)$-form can be seen as an $E \tensor K_X$-valued $(0,q)$-form.  Locally, we can write such a form as 
\[
\alpha = \alpha ^i e_i \tensor dz^1 \wedge ... \wedge dz^n.
\]
We can use he metric $\frac{h}{\det \omega}$ for $E \tensor K_X$ to compute that 
\[
\left <\alpha ,\beta\right >_{\omega, \frac{h}{\det \omega}} = h_{i\bar j} \alpha ^i \overline{\beta ^j} \frac{\ii ^n}{2^n} dz ^1 \wedge d\bar z^1 \wedge ... \wedge dz ^n \wedge d\bar z^n,
\]
which is a complex measure on $\Omega$, and can thus be integrated without reference to a volume form.

Now, the metric $\frac{h}{\det \omega}$ has curvature
\[
\Theta (h) - {\rm Ricci}(\omega),
\]
and the second term cancels out the Ricci curvature in \eqref{bk-gen}.  We therefore get the following restatement of Theorem \ref{bk-id-thm-gen}

\begin{d-thm}[Bochner-Kodaira-Morrey-Kohn Identity for $(n,q)$-forms]\label{bk-id-thm-nq}
Let $(X,\omega)$ be a K\"ahler manifold and $E \to X$ a holomorphic vector bundle with Hermitian metric $h$.  Denote by $\Theta (h)$ the curvature of the Chern connection for $E$.  Then for any smooth $E$-valued $(n,q)$-form $\alpha$ in the domain of $\dbar ^*_h$, i.e., satisfying the $\dbar$-Neumann boundary condition \eqref{dbar-Neumann-bc-gen}, one has the identity
\begin{eqnarray} \label{bk-gen}
||\dbar ^*_{h}\alpha||^2_{h,\omega} + ||\dbar \alpha ||^2_{h,\omega} &=& \int _{\Omega} \sum _{k=1} ^q \omega  ^{i\bar s} \Theta (h) _{i,\bar j_k} \alpha ^i_{\bar j_1...(\bar s)_k...\bar j_q}\wedge  \overline{\alpha_{\bar i} ^{j_1...j_k...j_q}} \\
\nonumber && \quad  +\int _{\Omega} |\bar \nabla \alpha|^2_{\omega ,\frac{h}{\det \omega}} dV_{\omega} \\
\nonumber && \qquad + \int _{\di \Omega} \sum _{k=1} ^q \omega ^{i\bar s} \frac{\di ^2\rho}{\di z^i \di \bar z ^{j_k}} \alpha^i _{\bar j_1...(\bar s)_k...\bar j_q} \overline{\alpha_i ^{j_1...j_k...j_q}}\frac{dS_{\omega, \di \Omega}}{\det \omega}.
\end{eqnarray}
\end{d-thm}

The notion of pseudoconvexity goes over to the case of domains in K\"ahler manifolds without change, but we can also introduce the notion of $q$-positive domains in a K\"ahler manifold.  Although the notion of $q$-positivity can be defined for a vector bundle, we restrict ourselves to the case of line bundles, since this is the main situation we will be interested in.

\begin{defn}
Let $(X,\omega)$ be a K\"ahler manifold.
\begin{enumerate}
\item[(i)]A smoothly bounded domain $\Omega \relcomp X$ with defining function $\rho$ is said to be $q$-positive if the Hermitian form $\ii \di \dbar \rho$, restricted to $T^{1,0}_{\Omega}$, is $q$-positive with respect to $\omega$ restricted to $T^{1,0}_{\Omega}$.
\item[(ii)]  Let $L \to X$ be a holomorphic line bundle.  We say that a Hermitian metric for $L \to X$ is $q$-positively curved with respect to $\omega$ if the Chern curvature $\ii \di \dbar \vp$ is $q$-positive with respect to $\omega$.
\red
\end{enumerate}
\end{defn}

\begin{s-rmk}
Although the notion of $q$-positive domain does depend on the ambient K\"ahler metric $\omega$, it does not depend on the choice of defining function $\rho$, as the reader can easily verify.
\red
\end{s-rmk}

With these notions in hand, we can now obtain a generalization of Theorem \ref{basic-est-cn} to the setting of domains in K\"ahler manifolds.  Again we will stick to $(0,q)$-forms with values in a line bundle.  

\begin{d-thm}[Basic Estimate]\label{basic-est-gen}
Let $\Omega$ be a smoothly bounded relatively compact domain in a K\"ahler manifold $(X,\omega)$, and assume the boundary of $\Omega$ is $q$-positive with respect to $\omega$.  Let $L \to X$ be a holomorphic line bundle with smooth Hermitian metric $e^{-\vp}$ such that the Hermitian form 
\[
\ii \left (\di \dbar \vp +{\rm Ricci}(\omega)\right )
\]
is uniformly strictly $q$-positive with respect to $\omega$.  Then for all $L$-valued $(0,q)$-forms $\alpha \in {\rm Domain}(\dbar) \cap {\rm Domain}(\dbar ^*_{\vp})$, we have the estimate 
\[
||\dbar \alpha||^2_{\omega,\vp} + ||\dbar ^*_{\vp}\alpha||^2_{\omega,\vp} \ge C ||\alpha||^2_{\omega, \vp},
\]
where $C$ is infimum over $\Omega$ of the sum of the $q$ smallest eigenvalue of $\ii \di \dbar \vp +\ii {\rm Ricci}(\omega)$ with respect to $\omega$.
\end{d-thm}

\section{The twisted Bochner-Kodaira-Morrey-Kohn Identity}\label{twisted-prelims-section}

\subsection{The Identity; two versions}

We stay in the setting of a K\"ahler manifold $(X,\omega)$ and a domain $\Omega \relcomp X$ with $q$-positive boundary.   Suppose we have a holomorphic line bundle $L \to X$ with Hermitian metric $e^{-\vp}$.  Let us split the metric into a product 
\[
e^{-\vp} = \tau e^{-\psi}
\]
where $\tau$ is a positive function and thus $e^{-\psi}$ is also a metric for $L$.   In view of the formula \eqref{formal-adjoint-vb} we have

\[
\dbar ^*_{\vp} \alpha = \dbar ^*_{\psi}\alpha - \tau ^{-1} \gradbar (\tau)
\]
where $\gradbar(\tau)$ is the $(0,1)$-vector field defined by 
\[
\overline{\di \tau (\bar \xi)} = \omega (\xi, \gradbar (\tau)), \quad \xi \in T^{0,1}_{\Omega}.
\]
We also have the curvature formula 
\[
\di \dbar \vp = \di \dbar \psi - \frac{\di \dbar \tau}{\tau} + \frac{\di \tau \wedge \dbar \tau}{\tau ^2}.
\]
Substitution into \eqref{bk-gen} yields the following theorem, in which we use the more global, and somewhat more suggestive, notation than that used in \eqref{bk-gen}.

\begin{d-thm}[Twisted Bochner-Kodaira-Morrey-Kohn Identity]\label{tbk-thm}
For all smooth $L$-valued $(0,q)$-forms in the domain of $\dbar ^*_{\psi}$,  one has the identity
\begin{eqnarray}\label{twisted-bk-id}
\nonumber&& \int _{\Omega} \tau |\dbar ^*_{\psi} \beta|^2 _{\omega} e^{-\psi} dV_{\omega} + \int _{\Omega} \tau |\dbar \beta|^2 _{\omega} e^{-\psi}dV_{\omega} \\
&& \qquad = \int _{\Omega} \left < \{ \tau \ii (\di \dbar \vp + {\rm Ricci}(\omega) )  - \ii \di \dbar \tau \}\beta, \beta \right > _{\omega} e^{-\psi}dV_{\omega} \\
\nonumber && \qquad \qquad + \int _{\Omega} \tau |\overline{\nabla} \beta|^2 _{\omega} e^{-\psi}dV_{\omega}  + \int _{\di \Omega}  \left < \{ \tau \ii\di \dbar \rho\} \beta, \beta \right > _{\omega} e^{-\psi} dS _{\di \Omega}\\
\nonumber && \qquad \qquad \qquad + 2 \re \int _{\Omega} \left < \dbar ^* _{\psi} \beta , \gradbar \tau \rfloor \beta \right > _{\omega} e^{-\psi} dV_{\Omega}.
\end{eqnarray}
\end{d-thm}

On the other hand, we can also apply integration by parts to the term 
\[
\int _{\Omega} \left < \dbar ^* _{\psi} \beta , \gradbar \tau \rfloor \beta \right > _{\omega} e^{-\psi} dV_{\Omega},
\]
to obtain 
\[
\int _{\Omega} \left < \dbar ^* _{\psi} \beta , \gradbar \tau \rfloor \beta \right > _{\omega} e^{-\psi} dV_{\Omega} = -\int _{\Omega} \tau \left < \dbar \dbar ^*_{\psi} \beta , \beta \right >_{\omega}e^{-\psi}dV_{\omega} +\int _{\Omega}\tau |\dbar ^*_{\psi}\beta|^2_{\omega}e^{-\psi}dV_{\omega}.
\]
Substitution into \eqref{twisted-bk-id} yields the following theorem, due to Berndtsson.

\begin{d-thm}[Dual version of the Twisted Bochner-Kodaira-Morrey-Kohn Identity]\label{tbk-thm-2}
For all smooth $L$-valued $(0,q)$-forms $\beta$ in the domain of $\dbar ^*_{\psi}$,  one has the identity
\begin{eqnarray}\label{twisted-bk-id-bo}
\nonumber&& 2\re \int _{\Omega} \tau \left < \dbar \dbar ^*_{\psi} \beta , \beta \right >_{\omega}e^{-\psi}dV_{\omega}+  \int _{\Omega} \tau |\dbar \beta|^2 _{\omega} e^{-\psi}dV_{\omega} \\
&& \qquad = \int _{\Omega} \tau |\dbar ^*_{\psi} \beta|^2 _{\omega} e^{-\psi} dV_{\omega} + \int _{\Omega} \left < \{ \tau \ii (\di \dbar \vp + {\rm Ricci}(\omega) )  - \ii \di \dbar \tau \}\beta, \beta \right > _{\omega} e^{-\psi}dV_{\omega} \\
\nonumber && \qquad \qquad + \int _{\Omega} \tau |\overline{\nabla} \beta|^2 _{\omega} e^{-\psi}dV_{\omega}  + \int _{\di \Omega}  \left < \{\tau\ii \di \dbar \rho\} \beta, \beta \right > _{\omega} e^{-\psi} dS _{\di \Omega}.
\end{eqnarray}
\end{d-thm}

\subsection{Twisted basic estimate}

By applying the Cauchy-Schwarz Inequality to the second integral on the right hand side of \eqref{twisted-bk-id}, followed by the inequality $ab \le Aa^2+A^{-1}b^2$, one obtains
\[
2\re  \left < \dbar ^* _{\psi} \beta , \gradbar \tau \rfloor \beta \right > _{\omega} \le A |\dbar ^*_{\psi}\beta |^2_{\omega} + A^{-1} \left < \{ \ii \di \tau \wedge \dbar \tau\} \beta ,\beta\right > _{\omega}.
\] 
Thus, the following inequality holds:

\begin{d-thm}[Twisted Basic Estimate] \label{tbe}
Let $(X,\omega)$ be a K\"ahler manifold and let $L \to X$ be a holomorphic line bundle with smooth Hermitian metric $e^{-\psi}$.  Fix a smoothly bounded domain $\Omega \relcomp X$ such that $\di \Omega$ is pseudoconvex. Let $A$ and $\tau$ be positive functions on a neighborhood of $\overline{\Omega}$ with $\tau$ smooth.    Then for any smooth $L$-valued $(0,q)$-form $\beta$ in the domain of $\dbar ^*_{\psi}$ one has the estimate 
\begin{eqnarray}\label{tbest}
\nonumber && \int _{\Omega} (\tau +A)|\dbar ^*_{\psi} \beta |^2_{\omega} e^{-\psi} dV_{\omega} + \int _{\Omega} \tau|\dbar  \beta |^2_{\omega} e^{-\psi} dV_{\omega} \\
&& \qquad \ge \int _{\Omega} \left <\left \{ \ii \left (\tau (\di \dbar \psi + {\rm Ricci}(\omega) )  - \di \dbar \tau - A^{-1}\di \tau \wedge \dbar \tau \right ) \right  \}\beta, \beta \right > _{\omega} e^{-\psi}dV_{\omega}.
\end{eqnarray}
\end{d-thm}

\subsection{A posteriori estimate}
An application of the big constant-small constant inequality to the left-most term of identity \eqref{twisted-bk-id-bo} in Theorem \ref{tbk-thm-2} yields the following estimate.

\begin{d-thm}[A posteriori estimate]\label{Berndtsson-est}
Let $(X,\omega)$ be a K\"ahler manifold and let $L \to X$ be a holomorphic line bundle with smooth Hermitian metric $e^{-\psi}$.  Fix a smoothly bounded domain $\Omega \relcomp X$ such that $\di \Omega$ is pseudoconvex. Let $\tau$ be a smooth positive function on a neighborhood of $\overline{\Omega}$, and let $\ii \Theta$ be a non-negative Hermitian $(1,1)$-form that is strictly positive almost everywhere.   Then for any smooth $L$-valued $(0,q+1)$-form $\beta$ in the domain of $\dbar ^*_{\psi}$ one has the estimate 
\begin{eqnarray}\label{twisted-apest}
\nonumber &&  \int _{\Omega} \tau |\dbar \dbar ^*_{\psi} \beta|^2_{\Theta,\omega}e^{-\psi}dV_{\omega}+  \int _{\Omega} \tau |\dbar \beta|^2 _{\omega} e^{-\psi}dV_{\omega} \\
&& \qquad \ge \int _{\Omega} \tau |\dbar ^*_{\psi} \beta|^2 _{\omega} e^{-\psi} dV_{\omega} + \int _{\Omega} \left <\left \{  \ii \left ( \tau (\di \dbar \psi + {\rm Ricci}(\omega) -\Theta )  - \di \dbar \tau \right ) \right \}\beta, \beta \right > _{\omega} e^{-\psi}dV_{\omega} \\
\nonumber && \qquad \qquad + \int _{\Omega} \tau |\overline{\nabla} \beta|^2 _{\omega} e^{-\psi}dV_{\omega}  + \int _{\di \Omega}  \left < \{\tau\ii \di \dbar \rho\} \beta, \beta \right > _{\omega} e^{-\psi} dS _{\di \Omega}.
\end{eqnarray}
\end{d-thm}

\begin{rmk}
The norm $|\cdot |_{\Theta , \omega}$ appearing in \eqref{twisted-bk-id-bo} requires a little explanation.  The Hermitian matrix $\ii \Theta$ can be seen as a metric for $X$ (almost everywhere), and this metric induces a metric on $(0,q)$-forms.  This is the metric appearing in the second term on the second line of \eqref{twisted-bk-id-bo}.  Since the inequality is obtain from an application of Cauchy-Schwarz and the big constant/small constant inequality, the metric $|\cdot |_{\Theta , \omega}$ corresponds to the "inverse metric".  However, the inverse transpose of the matrix of $\Omega$ produces a $(1,1)$-vector.  To identify this vector with a $(1,1)$-form, we must lower the indices, which we do using the form $\omega$.  If we write the resulting $(1,1)$-form as $\Theta ^{-1}_{\omega}$, then 
\[
|\alpha |^2_{\Theta , \omega} := \left < \{\ii \Theta ^{-1}_{\omega}\}\beta , \beta\right >_{\omega}.
\]
Note that if $\alpha$ is a $(0,1)$-form, $|\alpha |^2_{\Theta , \omega}= |\alpha |^2_{\Theta}$, but for $q\ge 2$, the two norms are different.

Analogous statements hold for $(0,q)$-forms with values in a vector bundle.  In that case, we denote the resulting metric 
\[
|\alpha|^2_{\Theta , \omega ; h},
\]
noting that when the vector bundle has rank $1$ and $h = e^{-\vp}$, then $|\alpha|^2_{\Theta , \omega ; h} = |\alpha|^2_{\Theta , \omega}e^{-\vp}$.
\red
\end{rmk}

\newpage

\begin{center}
{\large {\sc Part II.  Applications}}
\end{center}

\section{$\dbar$ theorems}

Our next goal is to exploit the various basic estimates we have established so far.  We begin with deriving a H\"ormander-type estimate from \eqref{bk-gen}, and then proceed to introduce twists and obtain other types of estimates in two ways.  One method uses the {\it a priori} twisted basic estimate \eqref{tbest}, while a second method combines Kohn's work on the $\dbar$-Neumann problem with the {\it a posteriori} estimate \eqref{twisted-apest}.  Finally, we will discuss some examples showing what sorts of improvements one obtains from the twisting techniques.

\subsection{H\" ormander-type}

The first result we present, which has seen an enormous number of applications in complex analysis and geometry, is the so-called H\"ormander Theorem.  The statement is as follows.

\begin{d-thm}[H\"ormander, Andreotti-Vesentini, Skoda]\label{h-type-thm}
Let $(X,\omega)$ be a K\"ahler manifold of complex dimension $n$, and $E \to X$ a holomorphic vector bundle with Hermitian metric $h$.  Fix $q \in \{1,...,n\}$.  Let $\Omega \relcomp X$ be a smoothly bounded domain whose boundary is $q$-positive with respect to $\omega$.  Assume there is a $(1,1)$-form $\Xi$ on $X$ such that such that 
\[
\ii(\Theta (h) +{\rm Ricci}(\omega))- \Xi 
\]
is $q$-positive with respect to $\omega$.  Then for any $E$-valued $(0,q)$-form $f$ on $\Omega$ such that 
\[
\dbar f = 0 \quad \text{and} \quad \int _{\Omega} |f|^2_{\Xi, \omega ; h} dV_{\omega} < +\infty
\]
there exists an $E$-valued $(0,q-1)$-form $u$ such that 
\[
\dbar u = f \quad \text{and} \quad \int _{\Omega} |u|^2_{\omega, h} dV_{\omega} \le \int _{\Omega} |f|^2_{\Xi, \omega ; h} dV_{\omega}.
\]
\end{d-thm}

\begin{rmk}
When $\Xi = c \omega$ for some positive constant $c$, Theorem \ref{h-type-thm} was proved independently and almost simultaneously by Andreotti-Vesentini and H\"ormander.  The general case is due to Skoda.
\red
\end{rmk}

\begin{proof}[Proof of Theorem \ref{h-type-thm}]
The standard proof of Theorem \ref{h-type-thm} uses the so-called Lax-Milgram lemma, but we will give an analogous, though less standard, proof that passes through the $\dbar$-Laplace-Beltrami operator.  

To this end, let us define the Hilbert space 
\[
L^2_q(\Xi) := \left \{\alpha \text{ mearsurable }(0,q)-form\ ;\ \int _{\Omega} \left < \Xi \alpha, \alpha \right >_{\omega, h} dV_{\omega} < +\infty \right \},
\]
We have a vector subspace $\sh _q \subset L^2_{q}(\Xi)$ defined by 
\[
\sh_q = {\rm Domain}(\dbar) \cap {\rm Domain}(\dbar ^*_h).
\]
Because smooth forms are dense in the graph norm, the space $\sh_q$ becomes a Hilbert space with respect to the norm 
\[
||\alpha||_{\sh_q}:= \left ( ||\dbar \alpha||^2 _{h,\omega} +||\dbar \alpha||^2 _{h,\omega}\right )^{1/2}.
\]
In view of Theorem \ref{bk-id-thm-gen} and the hypotheses of Theorem \ref{h-type-thm} the inclusion of Hilbert spaces 
\[
\iota : \sh_q \emb L^2_{0,q}(\Xi)
\]
is a bounded linear operator.

Now let $f$ be an $E$-valued $(0,q)$-form satisfying the hypotheses of Theorem \ref{h-type-thm}.  Define the bounded linear functional $\lambda _f \in L^2_q(\Xi)^*$ by 
\[
\lambda_f (\alpha) := (\alpha , f )_{\omega, h}.
\]
We already know that $\lambda_f \in \sh _q ^*$, but the estimate 
\[
|\lambda _f (\alpha)|^2 \le \left ( \int _{\Omega} |f|^2_{\Xi, \omega ; h} dV_{\omega} \right ) \left ( \int _{\Omega} \left < \Xi \alpha, \alpha \right >_{\omega, h} dV_{\omega}\right ) \le \left ( \int _{\Omega} |f|^2_{\Xi, \omega ; h} dV_{\omega} \right ) ||\alpha ||_{\sh _q}^2
\]
tells us that 
\[
||\lambda _f||^2_{\sh_q ^*}\le  \int _{\Omega} |f|^2_{\Xi, \omega ; h} dV_{\omega} .
\]
By the Riesz Representation Theorem there exists $\beta \in \sh_q$ representing $\lambda _f$, which is to say 
\[
( \dbar \alpha  , \dbar \beta)_{\omega , h} + (\dbar ^*_h \alpha , \dbar ^*_h \beta)_{\omega, h} = (\alpha , f)_{\omega , h}, \quad \alpha \in \sh _q.
\]
The last identity defines the notion of a weak solution $\beta$ to the equation 
\[
(\dbar ^*_h \dbar +\dbar \dbar ^*_h) \beta = f.
\]
Thus we have found a weak solution $\beta$ satisfying the estimate
\[
\int _{\Omega} \left < \Xi \beta , \beta \right >_{\omega , h} dV_{\omega} \le ||\beta||^2_{\sh_q} \le \int _{\Omega} |f|^2_{\Xi, \omega ; h} dV_{\omega},
\]
where in the first inequality we have used the appropriate modification of the basic estimate adapted to $\Xi$.  Notice that this is the case even if $\dbar f $ does not vanish identically.

Finally, assume that $\dbar f = 0$.  Then $f$ is orthogonal to the image of $\dbar ^*_h$, and thus we have 
\[
0 = (\dbar ^*_h \dbar \beta , f) = ( \dbar \dbar ^*_h \dbar \beta  , \dbar \beta)_{\omega , h} + (\dbar ^*_h \dbar ^*_h \dbar \beta , \dbar ^*_h \beta)_{\omega, h} = ||\dbar ^*_h \dbar \beta||^2_{\omega , h},
\]
and therefore 
\[
(\dbar ^*_h \alpha , \dbar ^*_h \beta)_{\omega, h} = (\alpha , f)_{\omega , h}.
\]
But the latter precisely says that the $(0,q-1)$-form 
\[
u := \dbar ^*_h \beta
\]
is a weak solution of  the equation $\dbar u = f$.  Moreover, since $\dbar ^*_h \dbar \beta = 0$, we have 
\[
||\dbar \beta ||^2_{\omega,h}= (\dbar ^*_h \dbar \beta ,\beta )_{\omega,h} = 0,
\]
and therefore we obtain the estimate 
\[
||u||^2_{\omega , h} = ||\dbar ^*_h\beta||^2_{\omega,h} = ||\dbar ^*_h\beta||^2_{\omega,h}+||\dbar \beta||^2_{\omega,h} \le \int _{\Omega} |f|^2_{\Xi, \omega ; h} dV_{\omega},
\]
thus completing the proof.
\end{proof}

\begin{rmk}[Regularity of the Kohn Solution]
Before moving on, let us make a few remarks on the solution $u$ obtained in the proof of Theorem \ref{h-type-thm}.  This solution was of the form $u = \dbar ^*_h \beta$ for some $(0,q)$-form $\beta$.  Since any two solutions of the equation $\dbar u = f$ differ by a $\dbar$-closed $E$-valued $(0,q)$-form, the solution $u$ is actually the one of minimal norm.  Indeed, it is clearly orthogonal to all $\dbar$-closed $E$-valued $(0,q)$-forms.

This solution, being minimal, is unique, and is known as the Kohn solution.  It was shown by Kohn that if furthermore the boundary of $\Omega$ is strictly pseudoconvex, then $u$ is smooth up to the boundary  if this is the case for $f$.  
We shall use this fact in the second twisted method below.

Finally, we should note that the approach of H\"ormander, namely using the Lax-Milgram Lemma instead of passing through solutions of the $\dbar$-Laplace-Beltrami operator, does not necessarily produce the minimal solution, but it does produce a solution with the same estimate.  Therefore, this estimate also bounds the minimal solution, so that the outcome of the two methods is the same, as far as existence and estimates of weak solutions is concerned.
\red
\end{rmk}

\subsection{Singular Hermitian metrics}

In many problems in analysis and geometry, there is much to be gained be relaxing the definition of Hermitian metric for a holomorphic line bundle.  Let us discuss this more general notion, often called a {\it singular Hermitian metric} (though perhaps the name {\it possibly singular Hermitian metric} is more appropriate).

\begin{defn}
Let $X$ be a complex manifold and $L \to X$ a holomorphic line bundle.  A {\it possibly singular Hermitian metric} is a measurable section $h$ of the the line bundle $L^* \tensor \overline{L^*} \to X$ that is symmetric and positive definite almost everywhere, and with the additional property that, for any nowhere-zero smooth section $\xi$ of $L$ on an open subset $U$, the function 
\[
\vp ^{(\xi)} := - \log h(\xi,\overline{\xi}) 
\]
is upper semi-continuous and lies in $L^1_{\ell oc}(U)$.  In particular, if $\xi$ is holomorphic, the $(1,1)$-current 
\[
\Theta _h := \di \dbar \vp ^{(\xi)}
\]
is called the curvature current of $h = e^{-\vp}$, and it is independent of the section $\xi$.
\red
\end{defn}

If $\Theta_h$ is non-negative, then the local functions $\vp^{(\xi)}$ are plurisubharmonic.  More generally, if $\Theta _h$ is bounded below by a smooth $(1,1)$-form then the local functions are quasi-plurisubharmonic, i.e., a sum of a smooth function and a plurisubharmonic function.  Thus possibly singular Hermitian metrics are subject to the results of pluripotential theory, including regularization.  If one can regularize a singular Hermitian metric in the right way, then many of the results we have stated, and will state, can be extended to the singular case.

We shall not be too precise about this point here; it is well-made in many other articles and texts, and though it is fundamental, focusing on it will take us away from the main goal of the article.  Suffice it to say that there are good regularizations available on the following kinds of spaces:
\begin{enumerate}
\item[(i)] Stein manifolds, i.e., properly embedded submanifolds of $\C^N$,
\item[(ii)] Projective manifolds, and 
\item[(iiI)] manifolds with the property that there is a hypersurface whose complement is Stein.
\end{enumerate}
We should mention that the recent resolution of the openness conjecture by Guan and Zhou \cite{gz-openness} has opened the door to new types of approximation techniques that we will not have time to go into here.  An interesting example can be found in \cite{cao}.  Though the strong openness conjecture deserves a more elaborate treatment, we have to make some hard choices of things to leave out, lest this article continue to grow unboundedly.  The reader should consult any of a number of articles on this important topic, including for example \cite{bo-oc,lempert,hiep} and references therein.

\subsection{Twisted estimates: method I}

A look at the twisted basic estimate \eqref{tbest} shows that there are two positive functions we must choose, namely $\tau$ and $A$ (with $\tau$ smooth).  In this section, we will always assume that $A = \frac{\tau}{\delta}$ for some constant $\delta$.  With this choice, the twisted basic estimate becomes 
\begin{eqnarray}
\nonumber &&\frac{1+\delta}{\delta} \int _{\Omega}\tau |\dbar ^*_{\psi}\beta |_{\omega} ^2 e^{-\psi} dV_{\omega} + \int _{\Omega} \tau |\dbar \beta |^2_{\omega} e^{-\psi} dV_{\omega}\\
\label{bmo-est} &&\qquad  \qquad  \ge \int _{\Omega} \left < \{ \tau \ii (\di \dbar \psi +{\rm Ricci}(\omega) )- \ii \di \dbar \tau - \delta \ii \tau^{-1} \di \tau \wedge \dbar \tau \}\beta , \beta \right > e^{-\psi}dV_{\omega}.
\end{eqnarray}

Using the estimate \eqref{bmo-est}, we shall prove the following theorem.   

\begin{d-thm} \label{twist-1-thm}
Let $(X, \omega)$ be a Stein K\"ahler manifold, and $L \to X$ a holomorphic line bundle with possibly singular Hermitian metric $e^{-\kappa}$.  Suppose there exists a smooth function $\eta :X \to \R$ and a $q$-positive, a.e. strictly $q$-positive Hermitian $(1,1)$-form $\Theta$ such that 
\[
\ii \left ( \di \dbar \kappa +{\rm Ricci}(\omega) + (1-\delta) \di \dbar \eta + (1+\delta)(\di \dbar \eta - \di \eta \wedge \dbar \eta)\right ) - \Theta 
\]
is $q$-positive for some $\delta \in (0,1)$.  Then for all $L$-valued $(0,q)$-forms $\alpha$ such that 
\[
\dbar \alpha = 0 \quad \text{and} \quad \int _X |\alpha |^2_{\Theta , \omega} e^{-\kappa} dV_{\omega} < +\infty
\]
there exists an $L$-valued $(0,q-1)$-form $u$ such that 
\[
\dbar u= \alpha  \quad \text{and} \quad \int _X |u|^2_{\omega} e^{-\kappa} dV_{\omega} \le \frac{1+\delta}{\delta} \int _X |\alpha |^2_{\Theta , \omega} e^{-\kappa} dV_{\omega}.
\]
\end{d-thm}

\begin{proof}
By the usual technique of approximation, we may replace $X$ by a pseudoconvex domain $\Omega \subset X$, and we may assume that all metrics are smooth.

Let 
\[
\tau = e^{-\eta} \quad \text{and} \quad \kappa = \psi - \eta.
\]
Define the operators 
\[
T = \sqrt{\frac{1+\delta}{\delta}}\dbar \circ\sqrt{\tau} \quad \text{and} \quad S = \sqrt{\tau} \circ \dbar.
\]
Then 
\[
e^{-\psi} \left (\tau (\di \dbar \psi +{\rm Ricci}(\omega) )- \di \dbar \tau - \frac{\delta}{\tau} \di \tau \wedge \dbar \tau \right ) = e^{-\kappa} \left ( \di \dbar \kappa + {\rm Ricci}(\omega) +2\di \dbar \eta - (1+\delta) \di \eta \wedge \dbar \eta  \right ),
\]
so by \eqref{bmo-est} and the hypotheses we have the a priori estimate 
\[
||T^*_{\psi}\beta||^2_{\psi} + ||S\beta||^2 _{\psi}\ge \int _{\Omega}\tau \left < \ii \Theta \beta , \beta \right >_{\omega} e^{-\psi} dV_{\omega}
\]
for all smooth $\beta$ in the domain of $T^*_{\psi}$ (which coincides with the domain of $\dbar ^*_{\psi}$).  Since the smooth forms are dense, the result holds for all $\beta$ in the domain of $T^*_{\psi}$.

If we now apply the proof of Theorem \ref{h-type-thm}, {\it mutatis mutandis}, to the operators $T$ and $S$ in place of $\dbar_q$ and $\dbar_{q+1}$ respectively, we obtain a solution $U$ of the equation 
\[
TU = \alpha
\]
with the estimate 
\[
\int _{\Omega} |U|^2e^{-\psi} dV_{\omega} \le \int _X |\alpha |^2_{\Theta , \omega} e^{-\kappa} dV_{\omega}.
\]
Letting $u = \sqrt{\frac{1+\delta}{\delta}\tau}U$, we have $\dbar u = \alpha$ and 
\[
\int _{\Omega}|u|^2e^{-\kappa} dV_{\omega} = \frac{1+\delta}{\delta} \int _{\Omega} |U|^2 e^{-\psi} dV_{\omega} \le \frac{1+\delta}{\delta} \int _{\Omega} |\alpha |^2_{\Theta , \omega} e^{-\kappa} dV_{\omega},
\]
which is what we claimed.
\end{proof}

\subsection{Twisted estimates: method II}

\begin{d-thm} \label{twist-2-thm}
Let $(X, \omega)$ be a Stein K\"ahler manifold, and $L \to X$ a holomorphic line bundle with Hermitian metric $e^{-\kappa}$.  Suppose there exists a smooth function $\eta :X \to \R$ and a $q$-positive, a.e. strictly $q$-positive Hermitian $(1,1)$-form $\Theta$ such that 
\[
\ii \left ( \di \dbar \kappa +{\rm Ricci}(\omega) +  \di \dbar \eta + (\di \dbar \eta - \di \eta \wedge \dbar \eta) - \Theta \right )
\]
is $q$-positive.  Let $\alpha$ be an $L$-valued $(0,q)$-form such that $\alpha = \dbar u$ for some $L$-valued $(0,q-1)$-form $u$ satisfying 
\[
\int _X |u|^2_{\omega} e^{-(\kappa+\eta)} dV_{\omega} < +\infty.
\]
Then the solution $u_o$ of $\dbar u_o = \alpha$ having minimal norm satisfies the estimate 
\[
\int _X |u_o|^2_{\omega} e^{-\kappa} dV_{\omega} \le  \int _X |\alpha |^2_{\Theta , \omega} e^{-\kappa} dV_{\omega}.
\]
\end{d-thm}

\begin{proof}
Since $X$ is Stein, it can be exhausted by strictly pseudoconvex domains.  If we prove the result for a strictly pseudoconvex domain $\Omega \relcomp X$ then the uniformity of the estimates will allow us, using Alaoglu's Theorem, to increase $\Omega$ to cover all of $X$.  Therefore we may replace $X$ by $\Omega$.

Let $\psi = \kappa +\eta$ and $\tau = e^{-\eta}$.  Then 
\[
\left (\tau (\di \dbar \psi +{\rm Ricci}(\omega)-\Theta) - \di \dbar \tau\right )e^{-\psi} = \left ( \di \dbar \kappa + {\rm Ricci}(\omega) + \di \dbar \eta + (\di \dbar \eta -\di \eta \wedge \dbar \eta )\right ) e^{-\kappa}.
\]
By Kohn's work on the $\dbar$-Neumann problem, on $\Omega$ the solution $u_{o,\Omega}$ of minimal norm is of the form 
\[
u _{o,\Omega} = \dbar ^*_{\psi}\beta
\]
for some $L$-valued $\dbar$-closed $(0,q)$-form $\beta$ that is smooth up to the boundary of $\Omega$ and satisfies the $\dbar$-Neumann boundary conditions.  It follows from \eqref{twisted-apest} and the hypotheses, we obtain the estimate 
\[
\int _{\Omega} |\alpha|^2_{\Theta, \omega}e^{-\kappa} dV_{\omega} \ge \int _{\Omega} |u_{o,\Omega}|^2_{\omega}e^{-\kappa} dV_{\omega},
\]
and the proof is finished by taking the aforementioned limit as $\Omega \nearrow X$.
\end{proof}

\subsection{Functions with self-bounded gradient}

We denote by $W^{1,2}_{\ell oc}(M)$ the set of locally integrable functions on a manifold $M$ whose first derivative, computed in the sense of distributions, is locally integrable.  In \cite{mcn-sbg} the following definition was introduced.

\begin{defn}
Let $X$ be a complex manifold.  A function $\eta \in W^{1,2}_{\ell oc}(X)$ is said to have self-bounded gradient if there exists a positive constant $C$ such that the $(1,1)$-current 
\[
\ii \di \dbar \eta - C \ii \di\eta \wedge \dbar \eta
\]
is non-negative.  We denote the set of functions with self-bounded gradient on $X$ by ${\rm SBG}(X)$.
\red
\end{defn}

Writing $\phi = C\eta$, we get 
\[
\ii \di \dbar \phi - \ii \di \phi \wedge \dbar \phi = C(\ii \di \dbar \eta - C \ii \di \eta \wedge \dbar \eta) \ge 0,
\]
and thus we can always normalize a function with self-bounded gradient so that $C=1$.  We write 
\[
{\rm SBG}_1(X) := \left \{ \eta \in W^{1,2}_{\ell oc}(X)\ ;\ \ii \di \dbar \eta \ge \ii \di \eta \wedge \dbar \eta\right \}.
\]

\begin{rmk}
The normalization $C=1$ has the minor advantage that one can then focus on the maximal positivity of $\ii \di \dbar \eta$ as $\eta$ varies over ${\rm SBG_1(X)}$.  On the other hand, for certain kinds of problems, such as regularity for the $\dbar$-Neumann problem, one expects to find local functions with self-bounded gradient and arbitrarily large Hessian, so in this case the normalization can have slight conceptual and notational  disadvantages.  In any case, it is easy to pass between the normalized and unnormalized notions, so we will not worry too much about this point.
\red
\end{rmk}

It might be hard to tell immediately whether one can have functions with self-bounded gradient on a given complex manifold.  Indeed, the condition that the square norm of the $(1,0)$-derivative of a function give a lower bound for its complex Hessian certainly appears to be a strong condition, but on the surface it does not immediately give a possible obstruction to the existence of such a function.  However, one can rephrase the property of self-bounded gradient.  To see how, note that 
\[
\ii \di \dbar (-e^{-\eta}) = e^{-\eta}(\ii \di \dbar \eta -  \di \eta \wedge \dbar \eta).
\]
Thus $\eta \in {\rm SBG}_1(X)$ if and only if $-e^{-\eta}$ is plurisubharmonic.  Since $-e^{-\eta} \le 0$, we see that if a complex manifold admits a function with self-bounded gradient if and only if it admits a negative plurisubharmonic function.  

\begin{ex}
${\rm  SBG}(\C ^n) = \{ \text{constant functions}\}.$
\red
\end{ex}

\begin{ex}\label{ball-example}
In the unit ball $\B _n \subset \C ^n$, one can take 
\[
\eta (z) = \log \frac{1}{1-|z|^2}.
\]
Then 
\[
\ii \di \dbar \eta - \ii \di \eta \wedge \dbar \eta = \frac{\ii dz \dot \wedge d\bar z}{1-|z|^2} 
\]
\red
\end{ex}

As one can see from H\"ormander's Theorem, if a complex manifold admits a bounded plurisubharmonic function, then this function can be added to any weight function without changing the underlying vector space of Hilbert space in which one is working, while doing so increases the complex Hessian of the weight, thus allowing H\"ormander's Theorem to be applied for a wider range of weights.  One of the main reasons for introducing functions with self-bounded gradient is that they achieve the same gain in the complex Hessian of the weight, but are not necessarily bounded.  

\begin{ex}
Let $X$ be a complex manifold and $Z \subset X$ a hypersurface.  Assume there exists a function $T \in \co (X)$ such that 
\[
Z = \{ x\in X\ ;\ T(x) = 0\}\quad \text{and} \quad \sup _X |T| \le 1.
\]
Then the function 
\[
\eta (x) = - \log (\log |T|^{-2})
\]
has self-bounded gradient.  Indeed, 
\[
\di \eta = \frac{1}{\log |T|^{-2}} \di \left ( \log |T|^2\right ),
\]
and
\[
\di \dbar \eta =  \frac{1}{\log |T|^{-2}} \di \dbar \left ( \log |T|^2\right )  +  \frac{dT \wedge d\bar T}{|T|^2(\log |T|^{-2})^2} = \frac{dT \wedge d\bar T}{|T|^2(\log |T|^{-2})^2},
\]
where the latter equality follows from the Poincar\'e-Lelong Formula.  Therefore
\[
\ii \di \dbar \eta - \ii \di \eta \wedge \dbar \eta = 0.
\]
To see that $\eta \in W^{1,2}_{\ell oc}(X)$, one argues as follows.  Obviously $\eta$ is smooth away from the zeros of $T$.  If the poles of $\ii \di \eta \wedge \dbar \eta$ have codimension $\ge 2$, then the Skoda-El Mir Theorem allows us to replace $\ii \di \eta \wedge \dbar \eta$ with the $0$ current.  Thus it suffices to check local integrability near the smooth points of $Z$.  At such a smooth point, one can take a local coordinate system whose first coordinate is $T$.  By Fubini's Theorem, we are therefore checking the local integrability of $|z|^{-2}(\log |z|^{-2})^{-2}$ near $0$ in $\C$ with respect to Lebesgue measure, and the latter follows from direct integration.  Thus $\eta \in {\rm SBG}_1(X)$.

Such a function, as well as some variants of it, will be used in the next section, when we discuss theorems on $L^2$ extension of holomorphic sections from $Z$ to $X$.
\red
\end{ex}

\subsection{How twist gives more}\label{SS:twist_more}

In this section, we elaborate how the the twisted $\dbar$ estimates given by Theorem \ref{twist-1-thm} are genuinely stronger than the $\dbar$ estimates given by H\" ormander's theorem, Theorem \ref{h-type-thm}. Of course, both theorems follow from
the same basic method: unravel the natural energy form associated to the complex being studied -- the left-hand side of \eqref{twisted-bk-id} for the twisted estimates and the left-hand side of \eqref{bk-gen} for H\" ormander's estimates -- via integration by parts.
So in a very general sense, both sets of estimates on $\dbar$ might be said to be ``equivalent'' to elementary calculus, and hence equivalent to each other. But such a statement is not illuminating, especially in regard to the positivity needed to invoke Theorems \ref{twist-1-thm} and 
 \ref{h-type-thm} -- the right-hand sides of  \eqref{twisted-bk-id} and \eqref{bk-gen}, respectively.
 
 In order to compare these two estimates, consider the simplest situation. Let $\Omega\subset\C^n$ be a domain with smooth boundary, equipped with the Euclidean metric, which is pseudoconvex. Let $\phi\in C^2(\Omega)$ be a function, variable at this point but to be determined soon. Let $f$ be an ordinary $(0,1)$-form on $\Omega$ satisfying $\dbar f=0$. [Thus, in Theorems \ref{twist-1-thm} and  \ref{h-type-thm}, $q=1$, $\omega=\text{Euclidean}$ (which we'll denote with a subscript $e$), $E\to X$ is the trivial bundle, and $h=e^{-\phi}$ globally.]
 
 Theorem  \ref{h-type-thm} guarantees a function $u$ solving $\dbar u=f$ and satisfying the estimate
 
 \begin{equation}\label{E:basic-hor-dbar}
 \int_\Omega |u|^2\, e^{-\phi}\, dV_e\leq \int_\Omega |f|^2_\Xi\, e^{-\phi}\, dV_e
 \end{equation}
 as long as
 
 \begin{equation}\label{E:basic-hor-hyp}
 \Xi=: \ii \partial\dbar\phi >0
 \end{equation}
(and the right-hand side of \eqref{E:basic-hor-dbar} $<\infty$). It seems to us that a $\dbar$-estimate can legitimately be said to hold ``by H\" ormander'' only if $\phi$ can be chosen such that \eqref{E:basic-hor-hyp} holds and then \eqref{E:basic-hor-dbar} is the resulting estimate.

On the other hand, Theorem  \ref{twist-1-thm} guarantees a solution to $\dbar v=f$ satisfying

 \begin{equation}\label{E:basic-twist-dbar}
 \int_\Omega |u|^2\, e^{-\phi}\, dV_e\leq C_\delta\, \int_\Omega |f|^2_\Theta\, e^{-\phi}\, dV_e
 \end{equation}
 as long as there exists a function $\eta$ and a constant $\delta\in (0,1)$ such that
 
 \begin{equation}\label{E:basic-twist-hyp}
 \Theta =: \ii\left[\partial\dbar\phi + (1-\delta) \di \dbar \eta + (1+\delta)(\di \dbar \eta - \di \eta \wedge \dbar \eta)\right] >0
 \end{equation}
(and the right-hand side of \eqref{E:basic-twist-dbar} $<\infty$). Inequality \eqref{E:basic-twist-hyp} is manifestly more general than  \eqref{E:basic-hor-hyp}. And there are two somewhat different ways in which the estimate \eqref{E:basic-twist-dbar} achieves more than estimate \eqref{E:basic-hor-dbar}: 
\begin{enumerate}
\item[(i)] when $\phi$ is specified, or
\item[(ii)] when the pointwise norm $|f|_{\Theta}$, appearing on the right-hand side of \eqref{E:basic-twist-dbar}, is specified. 
\end{enumerate}

As a very elementary illustration, suppose one seeks a $\dbar$ estimate in ordinary $L^2$ norms, i.e., $\phi =0$. No information directly follows ``by H\" ormander'' since  \eqref{E:basic-hor-hyp} fails. (Although, as we noted earlier, if $\Omega$ is {\it bounded}, we could add $|z|^2$ to $\phi$ and obtain a solution satisfying an $L^2$ estimate).  Notice, however, that if $\Omega$ supports a function $\eta$ with self-bounded gradient  such that

\begin{equation}\label{E:closed-range}
\ii\left(\di\dbar\eta -\di\eta\wedge\dbar\eta\right)\geq a \ii \di \dbar |z|^2 >0,
\end{equation}
then  \eqref{E:basic-twist-dbar} gives a solution to $\dbar v=f$ satisfying $\int_\Omega |v|^2\, dV_e\leq C\, \int_\Omega |f|^2\, dV_e$ as desired. And condition \eqref{E:closed-range} can hold on some unbounded domains $\Omega$.

More generally, one may seek an estimate with a specified $\phi$, where this function is not even weakly plurisubharmonic. This situation occurs in the $L^2$ extension theorems with ``gain'' discussed in Section \ref{S:extension} below. In these cases, positivity of 
$\ii\left(\di\dbar\eta -\di\eta\wedge\dbar\eta\right)$ can be used to compensate for negativity of $\ii\di\dbar\phi$ in order to achieve $\Theta >0$ and get estimate \eqref{E:basic-twist-dbar}.

However, the most significant feature of the twisted $\dbar$ estimates, to our mind, comes when one {\it needs} to specify the ``curvature'' term occurring in the pointwise norm of $f$ on the right-hand side of the estimate, in order to assure that this integral is uniformly finite.
We refer to expressions like $\Theta$ or $\Xi$ as "curvature terms" simply for convenient shorthand; by ``uniformly finite'' we mean the integrals are bounded independently of certain parameters built into the functions $\eta$ and/or $\kappa$. There are many natural problems where large enough curvature terms of the form $\Xi$ can {\it not} be constructed without re-introducing blow-up in the form of the density $e^{-\phi}$ in the integrals. The Maximum Principle for plurisubharmonic functions is the obstruction. 

To see this explicitly, consider the (simplest) set-up of the Ohsawa-Takegoshi extension theorem (stated below as Theorem \ref{ot-vanilla}): $H$ is a complex hyperplane in $\C^n$, $\Omega$ is a bounded pseudoconvex domain, and
$f$ is a holomorphic function on $H\cap\Omega$ with finite $L^2$ norm. The point discussed below is perhaps the most important difficulty in establishing $L^2$ extension, and the issue arises in other, more complicated extension problems as well. 

To prove the Ohsawa-Takegoshi theorem, one first notes that it suffices to consider the to-be-extended
function, $f$, to be $C^\infty$ in an open neighborhood of $\overline{H\cap\Omega}$ in $H$. This reduction is achieved
by exhausting $\Omega$ by pseudoconvex domains $\Omega_c$ with smooth boundaries (the domains $\Omega_c$ can be taken to be strongly pseudoconvex as
well, but this is inessential for the current discussion). This reduction is by now a standard result in the subject.
However, the size of this neighborhood, say $U$, is \textit{not}
uniform -- it depends on the parameter $c$ above or, equivalently, on the function $f$ to be extended. It is essential to obtain estimates that do not depend on the size of this neighborhood; this is the heart of the proof of the Ohsawa-Takegoshi theorem.

If coordinates are chosen so that $H=\{z_n=0\}$, it is natural to extend a given holomorphic $f(z_1,\dots z_{n-1})$ simply be letting it be constant in $z_n$. But note that extending $f$ in this way does not necessarily define a function on all of $\Omega$. The purpose of the first reduction is to circumvent this difficulty. 
If $f$ is assumed defined (and smooth) on $U$ above, which sticks out of $\overline\Omega$ somewhat, then there exists an $\epsilon >0$ such that all points in
$$\left\{ z=(z', z_n)\in\Omega: z'\in H\cap\Omega\text{ and } |z_n| <\epsilon\right\}$$
have the property that $(z',0)\in H\cap U$. Note that the size of $\epsilon$ depends on the unspecified neighborhood $U$, so can be small in an uncontrolled manner.
One then takes a cut-off function $\chi\left(\left|z_n\right|\right)$, whose support is contained in $\{|z_n| <\epsilon\}$ and which is $\equiv 1$ near $H$; a smooth extension of $f$ to $\Omega$ is then given by $\tilde f(z_1,\dots z_{n-1}, z_n)=\chi\left(\left|z_n\right|\right)\cdot f(z_1,\dots z_{n-1})$. 

Defining $\alpha =\dbar\left(\tilde f\right)$, we now seek to solve $\dbar u=\alpha$ with estimates on $u$ in terms of the $L^2$ norm of $f$ alone. Note that $|\dbar\chi|^2 \lesssim \frac 1{\epsilon^2}$, where $\epsilon$ is the thickness of the slab above. This is the enemy of our desired estimate. In order the kill this term on the right-hand side of the $\dbar$ inequality, we need a curvature term of size $\approx \frac 1{\epsilon^2}$ in a $\epsilon$ collar about $\{z_n=0\}$. Additionally, this curvature must be produced without introducing perturbation factors which cause the perturbed $L^2$ norms to differ essentially from the starting $L^2$ structure.
Using H\" ormander's $\dbar$ set-up, this can only done by introducing weights, $\phi_\epsilon=\phi$, which

\begin{itemize}
\item[(i)] are plurisubharmonic and
\item[(ii)] are \textit{bounded} functions, independently of $\epsilon$, while
\item[(iii)] $\ii\partial\dbar\phi\geq \frac{C}{\epsilon^2}\ii dz_n\wedge d\bar z_n$ on ${\rm Support}(\chi)$,  for all sufficiently small $\epsilon>0$ and some constant $C>0$ independent of $\epsilon$.
\end{itemize}

\noindent These requirements are incompatible, as we now show.

\subsubsection{An extremal problem}\label{SSS:extremal}

Let ${\rm SH}(\Omega)$ denote the subharmonic functions on a domain $\Omega\subset\C$.
Let $D(p;a)$ denote the disc in $\C^1$ with center $p$ and radius $a$. Define the set of functions
\[
\sg=\left\{u\in {\rm SH}(D)\cap C^2(D):\,\, 0\leq u(z)\leq 1, z\in D\right\},
\]
where $D=D(0;1)$. For $0<\epsilon <1$, consider the following extremal problem: how large can $K>0$ be such that
\begin{equation}\label{E:lower}
\triangle u(z)\geq K\qquad\forall\, z\in D(0;\epsilon)\,\, 
\end{equation}
for $u\in\sg$?

We first observe that it suffices to consider \textit{radial} elements in $\sg$.

\begin{lem} Let $u\in {\rm SH}(D)\cap C^2(D)$ satisfy \eqref{E:lower}. There exists a radial $v\in {\rm SH}(D)\cap C^2(D)$ such that
\begin{itemize}
\item[(i)] $\|v\|_{L^\infty(D)}\leq \|u\|_{L^\infty(D)}$
\item[(ii)] $\triangle v(r)\geq K$ if $0\leq r\leq\epsilon$.
\end{itemize}
\end{lem}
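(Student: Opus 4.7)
The natural candidate for $v$ is the circular average of $u$,
\[
v(r) := \frac{1}{2\pi}\int_0^{2\pi} u(re^{i\theta})\, d\theta, \qquad 0 \le r < 1,
\]
which I will extend to a function on $D$ by $v(z) := v(|z|)$. My plan is to verify the three required properties of $v$ directly from this definition.

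First, since $u$ is $C^2$ on $D$, differentiation under the integral shows $v \in C^2(D)$, and since $v$ depends only on $|z|$ it is radial. The $L^\infty$ bound is immediate: for every $r \in [0,1)$,
\[
|v(r)| \le \frac{1}{2\pi}\int_0^{2\pi} |u(re^{i\theta})|\, d\theta \le \|u\|_{L^\infty(D)},
\]
so $\|v\|_{L^\infty(D)} \le \|u\|_{L^\infty(D)}$, which gives (i).

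The main computation is for the Laplacian, and this is where I would focus attention. In polar coordinates $\Delta = \partial_r^2 + \tfrac{1}{r}\partial_r + \tfrac{1}{r^2}\partial_\theta^2$. Applying this to $v(r)$, and using that $\tfrac{1}{2\pi}\int_0^{2\pi}\partial_\theta^2 u(re^{i\theta})\, d\theta = 0$ because the integrand is the $\theta$-derivative of a $2\pi$-periodic function, I obtain
\[
\Delta v(r) \;=\; v''(r) + \tfrac{1}{r} v'(r) \;=\; \frac{1}{2\pi}\int_0^{2\pi} (\Delta u)(re^{i\theta})\, d\theta.
\]
Now if $0 \le r \le \epsilon$, then every point $re^{i\theta}$ lies in $\overline{D(0;\epsilon)}$, and hypothesis \eqref{E:lower} gives $\Delta u(re^{i\theta}) \ge K$. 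Integrating this pointwise inequality yields $\Delta v(r) \ge K$ for $0 \le r \le \epsilon$, which is (ii). The same identity with the weaker bound $\Delta u \ge 0$ (valid on all of $D$, since $u$ is subharmonic and $C^2$) shows $\Delta v \ge 0$ throughout $D$, so $v$ is indeed subharmonic. No real obstacle arises; the only mild care needed is to justify differentiation under the integral and the vanishing of the $\theta$-derivative integral, both of which are standard since $u \in C^2(D)$.
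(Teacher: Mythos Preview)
Your proof is correct and follows essentially the same route as the paper: define $v$ as the circular average of $u$, check (i) trivially, and compute $\Delta v$ in polar coordinates using that the integral of $\partial_\theta^2 u$ over a full period vanishes. The one small difference is that you deduce subharmonicity of $v$ directly from the identity $\Delta v(r) = \frac{1}{2\pi}\int_0^{2\pi}(\Delta u)(re^{i\theta})\,d\theta$ together with $\Delta u \ge 0$, whereas the paper cites Hardy's convexity theorem; your argument is more self-contained here.
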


\begin{proof}
Define
\[
v(r)=\frac 1{2\pi}\int_0^{2\pi} u\left(r e^{\ii\alpha}\right)\, d\alpha.
\]
The function $v$ is clearly radial and satisfies (i). It is also a standard fact, ``Hardy's convexity theorem'', see e.g. \cite[Page 9]{duren}, that $v\in {\rm SH}(D)$.

To see (ii), recall that in polar coordinates $(r,\theta)$
\[
\triangle =\frac{\partial^2}{\partial r^2} +\frac 1r\, \frac{\partial}{\partial r} +\frac 1{r^2}\, \frac{\partial^2}{\partial\theta^2}.
\]
Thus
\begin{align*}
\triangle v(r)&=\left[\frac{\partial^2}{\partial r^2} +\frac 1r\, \frac{\partial}{\partial r}\right] v(r) \\
&=\frac 1{2\pi}\int_0^{2\pi} \left[\frac{\partial^2}{\partial r^2} +\frac 1r\, \frac{\partial}{\partial r}\right] \left(u\left(re^{\ii\alpha}\right)\right)\, d\alpha \\
(*)\quad &= \frac 1{2\pi}\int_0^{2\pi} \left[\frac{\partial^2}{\partial r^2} +\frac 1r\, \frac{\partial}{\partial r}+\frac 1{r^2}\frac{\partial^2}{\partial\alpha^2}\right] \left(u\left(re^{\ii\alpha}\right)\right)\, d\alpha \\ &\geq K\qquad\text{if  } re^{\ii\alpha}\in D(0;\epsilon),
\end{align*}
since $u$ satisfies \eqref{E:lower}. Note that to obtain equality (*), the angular part of the Laplacian was added to the integrand; that this is zero follows using integration by parts:
\[
\frac 1{2\pi}\frac 1{r^2} \int_0^{2\pi}\frac{\partial}{\partial\alpha}\left[\frac{\partial}{\partial\alpha}\, u\left(r e^{\ii\alpha}\right)\right]\, d\alpha \stackrel{\text{IBP}}{=}
\left.\frac 1{2\pi r^2}\frac{\partial}{\partial\alpha}\, u\left(r e^{\ii\alpha}\right)\right|^{\alpha= 2\pi}_{\alpha =0} =0.
\]
\end{proof}
\medskip

Let $\sg_{\text{rad}}$ denote the radial functions in $\sg$.

\begin{prop}\label{P:extremal} Suppose $u\in\sg_{\text{rad}}$, $0 <\epsilon <1$, and $\triangle u(z)\geq K$ for all $z\in D(0;\epsilon)$. Then
$$K\lesssim \frac 1{\epsilon^2}\, \left(\log \frac 1\epsilon\right)^{-1},$$
\medskip
where the estimate $\lesssim$ is uniform in $\epsilon$.
\end{prop}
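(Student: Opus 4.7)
The plan is to exploit the radial form of the Laplacian together with the two pieces of information we have about $u$: a pointwise lower bound for $\triangle u$ on $D(0;\epsilon)$, and mere subharmonicity (that is, $\triangle u \geq 0$) on the larger disc $D$. Since $u$ depends only on $r=|z|$, we have
\[
\triangle u(r) = u''(r) + \frac{1}{r} u'(r) = \frac{1}{r}\bigl(r u'(r)\bigr)',
\]
so the hypothesis $\triangle u \geq K$ on $D(0;\epsilon)$ becomes $\bigl(ru'(r)\bigr)' \geq K r$ for $0 \leq r \leq \epsilon$, and subharmonicity on all of $D$ becomes $\bigl(ru'(r)\bigr)' \geq 0$ for $0 \leq r < 1$.

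The first step is to integrate the strong inequality on $[0,\epsilon]$. Since $ru'(r) \to 0$ as $r \to 0^+$ (this follows from $u \in C^2$), integration yields $ru'(r) \geq K r^2/2$ on $[0,\epsilon]$; in particular,
\[
\epsilon\, u'(\epsilon) \geq \frac{K\epsilon^2}{2}.
\]
Next I would propagate this lower bound to the annulus $\epsilon \leq r \leq 1$ using subharmonicity alone. Since $\bigl(r u'(r)\bigr)' \geq 0$ on all of $[0,1)$, the function $r \mapsto r u'(r)$ is non-decreasing, so for $\epsilon \leq r < 1$,
\[
r u'(r) \geq \epsilon u'(\epsilon) \geq \frac{K\epsilon^2}{2},
\quad\text{i.e.,}\quad u'(r) \geq \frac{K\epsilon^2}{2 r}.
\]

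Finally, I would integrate this lower bound on $u'$ from $r=\epsilon$ to $r=1$ and use the boundedness of $u$ in $[0,1]$. Indeed,
\[
u(1) - u(\epsilon) \geq \int_{\epsilon}^{1} \frac{K\epsilon^2}{2 r}\, dr = \frac{K \epsilon^2}{2}\,\log\frac{1}{\epsilon},
\]
while $u \in \sg$ forces $u(1)-u(\epsilon) \leq 1$. Combining these gives
\[
K \leq \frac{2}{\epsilon^2\,\log(1/\epsilon)},
\]
which is exactly the claimed bound.

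There is no real obstacle here; the only conceptual point to get right is the two-stage structure: the hypothesis on $D(0;\epsilon)$ produces the pointwise lower bound on $u'(\epsilon)$, while mere subharmonicity on $D$ propagates this bound across the collar $[\epsilon,1]$ with an unavoidable $1/r$ decay, producing the logarithmic loss. The $L^\infty$ normalization then converts a large $K$ into a small $\epsilon$, explaining why this bound is sharp up to a constant and why the extremal problem exactly quantifies the obstruction discussed at the end of subsection \ref{SS:twist_more}.
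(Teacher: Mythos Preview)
Your proof is correct and follows essentially the same approach as the paper: integrate $(r u'(r))' \ge Kr$ on $[0,\epsilon]$ to get $\epsilon u'(\epsilon)\ge K\epsilon^2/2$, use subharmonicity to propagate this as $u'(r)\ge K\epsilon^2/(2r)$ on $[\epsilon,1)$, then integrate and invoke the $L^\infty$ bound. The only cosmetic difference is that you evaluate at $r=1$ directly while the paper keeps $s<1$; since $u$ is bounded and $u'\ge 0$ on $[\epsilon,1)$, either formulation is fine.
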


\begin{proof} 
We use the standard notation $f_r=\frac{\partial f}{\partial r}$.  Since $u$ is radial and subharmonic on $D(0;1)$, 
\[
\frac\partial{\partial r}\left(r\, u_r(r)\right)\geq 0\qquad\text{for all }\, 0\leq r <1.
\]
In particular, for $\epsilon\leq s< 1$ we have
\[
\int_\epsilon^s \frac\partial{\partial r}\left(r\, u_r(r)\right)\, dr \geq 0,
\]
which implies
\begin{equation}\label{E:1}
s\, u_r(s)\geq \epsilon\, u_r(\epsilon)\qquad\forall\,\, \epsilon\leq s <1.
\end{equation}

On the other hand, $\triangle u\geq K$ on $D(0;\epsilon)$ implies
\begin{equation}\label{E:2}
\frac\partial{\partial r}\left[r\, u_r(r)\right] =r\, \triangle u\geq K\, r\quad\text{for  } 0<r<\epsilon.
\end{equation}
Integrate both sides of \eqref{E:2} from $0$ to $\epsilon$ to obtain
\begin{equation}\label{E:3}
\epsilon\, u_r(\epsilon)\geq \frac {K\epsilon^2}2.
\end{equation}
Now combine \eqref{E:1} with \eqref{E:3} to get
\begin{equation}\label{E:4}
u_r(s)\geq \frac {K\epsilon^2}2\cdot \frac 1s \qquad\forall\,\, \epsilon\leq s <1.
\end{equation}
However
\begin{align*}
u(s)&=\int_\epsilon^s u_r(t)\, dt +u(\epsilon) \\
&\geq  \frac {K\epsilon^2}2\cdot\log\left(\frac s\epsilon\right)+u(\epsilon)
\end{align*}
follows from \eqref{E:4}. This, plus the fact that $u\leq 1$, gives
\[
\frac {K\epsilon^2}2\cdot\log\left(\frac s\epsilon\right)\leq 2,
\]
and thus 
\[
K \lesssim \frac 1{\epsilon^2}\, \left(\log \frac 1\epsilon\right)^{-1}.
\]
as claimed.
\end{proof}

\bigskip

\begin{s-rmk}
Note that $\frac 1{\epsilon^2}\, \left(\log \frac 1\epsilon\right)^{-1} << \frac 1{\epsilon^2}$ as $\epsilon\to 0$.
\red
\end{s-rmk}

Now return to the discussion before Subsection \ref{SSS:extremal}. The $\dbar$ data, $\alpha$, associated to the smooth extension of $f$, is large as $\epsilon\to 0$: $|\alpha|^2 =\left|\dbar\chi\right|^2\, |f|^2 \approx \frac 1{\epsilon^2}$ on the support of $\dbar\chi$.
It follows from Proposition \ref{P:extremal} that the $\ii dz_n\wedge d\bar z_n$ component of $\ii\di\dbar\phi$ is $\leq C \epsilon^{-2}\log\left(\frac 1\epsilon\right)^{-1}$ for any bounded plurisubharmonic function on $\Omega$. Thus, there is {\bf no} bounded psh function $\phi$ such that
$$|\alpha|^2_\Xi\, e^{-\phi} <K,$$
for $K$ independent of $\epsilon$. Consequently, the Ohsawa-Takegoshi theorem does not follow ``by H\" ormander'' in the sense described earlier.

As another example where the twisted estimates yield more than H\" ormander, consider the Poincare metric. Let $D\subset\C$ be the unit disc. The Poincare metric on $D$ (up to a constant) has K\" ahler form

\begin{equation}\label{E:poincare_1d}
P=\frac {\ii dz \wedge d\bar z}{\left(1-|z|^2\right)^2},
\end{equation}
i.e., the pointwise Poincare length of a form $f dz$ is $|fdz|_P=|f| \left(1-|z|^2\right)$, where $|\cdot|$ is ordinary absolute value.

A simple argument shows that $P$ cannot arise from a {\it bounded} potential:

\begin{prop} There is no  $\lambda\in L^\infty(D)$ such that
$$\ii\di\dbar\lambda (z)\geq \frac {\ii dz \wedge d\bar z}{\left(1-|z|^2\right)^2},\qquad z\in D.$$
\end{prop}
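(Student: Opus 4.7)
The plan is to argue by contradiction using circular means, exploiting the fact that the curvature of any potential $\lambda$ satisfying the hypothesis must be so large near $\di D$ that $\lambda$ cannot remain bounded. Assume such a $\lambda \in L^\infty(D)$ exists. Since $\ii\di\dbar\lambda \geq P \geq 0$ as a current, $\lambda$ agrees almost everywhere with a subharmonic function, to which I would pass. A standard mollification then provides smooth subharmonic approximants $\lambda_\epsilon \in C^\infty(D_{1-\epsilon})$, bounded by $\|\lambda\|_\infty$ and satisfying the pointwise inequality $\triangle \lambda_\epsilon(z) \geq c\,(1-|z|^2)^{-2}$ on each fixed compact subset of $D$ for all sufficiently small $\epsilon$ (with $c>0$ an absolute constant coming from the normalization $\ii\,dz\wedge d\bar z = 2\,dx\wedge dy$), so it suffices to treat a smooth $\lambda$ with this pointwise bound.

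Next I would introduce the circular mean
\[
\mu(r) := \frac{1}{2\pi}\int_0^{2\pi} \lambda(re^{\ii\theta})\,d\theta, \qquad 0 \leq r < 1.
\]
Subharmonicity gives $\mu(r) \leq \|\lambda\|_\infty$ for every $r$. The polar computation used inside the proof of Proposition \ref{P:extremal}---where the angular derivatives drop out upon integration by parts---then yields $(r\mu'(r))' = (2\pi)^{-1} r \int_0^{2\pi} \triangle \lambda(re^{\ii\theta})\,d\theta$, and integrating from $0$ to $r$ produces
\[
r\mu'(r) = \frac{1}{2\pi} \int_{D(0;r)} \triangle \lambda \, dA.
\]

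The key step is then a direct calculation: inserting the pointwise lower bound and integrating in polar coordinates,
\[
r\mu'(r) \;\geq\; c \int_0^r \frac{t\,dt}{(1-t^2)^2} \;=\; \frac{cr^2}{2(1-r^2)},
\]
so $\mu'(r) \geq cr/[2(1-r^2)]$. One more integration in $r$ gives
\[
\mu(r) - \mu(0) \;\geq\; -\frac{c}{4}\log(1-r^2),
\]
whose right-hand side tends to $+\infty$ as $r \to 1^-$, contradicting $\mu(r) \leq \|\lambda\|_\infty < \infty$. The only place that really needs care is justifying that the mollification preserves both the uniform bound and the pointwise curvature inequality on compact subdiscs; this is routine, but I would be careful to let $\epsilon \to 0$ before letting $r \to 1^-$ so that the two limits do not interfere. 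Conceptually, this proposition should be viewed as the sharp manifestation of the extremal problem from Proposition \ref{P:extremal}: bounded subharmonic functions simply cannot support curvature of order $(1-|z|^2)^{-2}$ all the way to the boundary.
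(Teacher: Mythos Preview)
Your proof is correct. Both arguments reach the same contradiction---bounded $\lambda$ versus too much curvature near $\partial D$---but the mechanics differ. The paper dilates, setting $\lambda_r(z)=\lambda(rz)$, and integrates $(\lambda_r)_{z\bar z}$ against the test function $r^2-|z|^2$ over $D_r$; two integrations by parts bound this by a constant times $\|\lambda\|_\infty$, while the curvature hypothesis forces the same integral to be $\gtrsim\int_{D_r}(r^2-|z|^2)^{-1}=+\infty$ already for each fixed $r<1$. Your route via circular means is the Jensen/Hardy variant: you extract $r\mu'(r)=\tfrac{1}{2\pi}\int_{D_r}\triangle\lambda$ and then integrate once more in $r$, so the blow-up only appears in the limit $r\to 1^-$. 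The paper's testing against $r^2-|z|^2$ is a shade quicker (one weighted integral, no second antiderivative), while your argument avoids the dilation trick and is arguably more transparent about regularity, since you spell out the mollification and the order of limits. Either way the content is the same potential-theoretic obstruction you correctly identify as the sharp form of Proposition~\ref{P:extremal}.
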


\begin{proof}
Suppose there were such a $\lambda$. For $0<r<1$, let  $D_r=\{ z: |z|<r\}$ and $\lambda_r(z)=\lambda (rz)$.  Integration by parts gives

\begin{align*}
\int_{D_r}
\frac{\partial^2}{\partial z\partial\bar z}\left(\lambda_r(z)\right) (r^2-|z|^2)
&=\left|\int_{D_r} \frac{\partial\lambda_r}{\partial\bar z} \bar z\right| \\
&=\left|-\int_{D_r}\lambda -\int_{bD_r}\lambda \frac{|z|^2}{|z|+|\bar z|} \right|\\
&\leq 2\pi ||\lambda ||_{\infty}.
\end{align*}

But the lower bound on $\ii\partial\bar\partial\lambda$ implies

\begin{align*}
\int_{D_r}
\frac{\partial^2}{\partial z\partial\bar z}\left(\lambda_r(z)\right) (r^2-|z|^2)
&\geq \int_{D_r}\frac{r^2}{(r^2-|z|^2)^2} (r^2-|z|^2) \\
&\geq 2\pi r^2\int_0^r\frac 1{(r^2-\rho^2)}\rho\, d\rho
\\ &= +\infty,\end{align*}
which is a contradiction.
\end{proof}

Therefore, it is not possible to conclude ``by H\" ormander'' that we can solve $\dbar u =f$ with the estimate

\begin{equation}\label{E:dbar-poincare-1d}
\int_D |u|^2\, dV_e\leq C\, \int_D |f|^2_P\, dV_e.
\end{equation}
But \eqref{E:dbar-poincare-1d} is true and follows easily from \eqref{E:basic-twist-dbar}: take $\phi =0$ and $\eta=-\log\left(1-|z|^2\right)$, and compute that $\Theta= P$ in \eqref{E:basic-twist-hyp}.

Estimates like \eqref{E:dbar-poincare-1d} for classes of domains in $\C^n$ will be discussed in Section \ref{S:invariant} below.

\subsection{Some examples of estimates for $\dbar$ under weakened curvature hypotheses}

In this section, we demonstrate the sort of improvements that we get from the twisted estimates for $\dbar$ in a number of situations.

\subsubsection{The unit ball}
Let us begin with the unit ball $\B_n$.  We write 
\[
\omega _P := \ii \di \dbar \log \frac{1}{1-|z|^2}
\]
for the Poincar\'e metric.  We begin by applying the twisted estimates of Method I.  From Theorem \ref{twist-1-thm} and Example \ref{ball-example} we have the following theorem.

\begin{d-thm}\label{ball-euc-dbar-1}
Let $\psi \in L^1_{\ell oc}(\B_n)$ be a weight function, and assume there exists a positive constant $\delta$ such that 
\[
\ii \di \dbar \psi \ge -(1-\delta)\omega _P.
\]
Then for any $(0,1)$-form $\alpha$ such that 
\[
\dbar \alpha = 0 \quad \text{and} \quad \int _{\B_n}|\alpha|^2_{\omega _P}e^{-\psi} dV < +\infty
\]
there exists a locally integrable function $u$ such that 
\[
\dbar u = \alpha \quad \text{and} \quad \int _{\B _n} |u|^2 e^{-\psi} dV \le \frac{2(2+\delta)}{\delta^2}  \int _{\B_n}|\alpha|^2_{\omega _P}e^{-\psi} dV.
\]
\end{d-thm}

\begin{proof}
In Theorem \ref{twist-1-thm}, we let $\omega = \frac{\ii}{2} \di \dbar |z|^2$, $\kappa = \psi$, $\Theta = \frac{\delta}{2} \omega _P$  and $\eta = \log \frac{1}{1-|z|^2}$.  Then 
\[
\ii (\di \dbar \kappa + {\rm Ricci}(\omega) +(1-\tfrac{\delta}{2})\di \dbar \eta + (1+\tfrac{\delta}{2})(\di \dbar \eta - \di \eta \wedge \dbar \eta ))- \Theta  \ge  \ii \di \dbar \psi + (1-\delta )\omega _P \ge 0.
\]
Thus the hypotheses of Theorem \ref{twist-1-thm} hold, and we have our proof.
\end{proof}

Next, we turn to the application of Method II, i.e., Theorem \ref{twist-2-thm}.

\begin{d-thm}\label{ball-euc-dbar-2}
Let $\psi \in L^1_{\ell oc}(\B_n)$ be a weight function, and assume there is a positive constant $\delta$ such that 
\[
\ii \di \dbar \psi \ge -(1-\delta)\omega _P.
\]
Fix any $(0,1)$-form $\alpha$ such that 
\[
\dbar \alpha = 0 \quad \text{and} \quad \int _{\B_n}|\alpha|^2_{\omega _P}e^{-\psi} dV < +\infty.
\]
Assume there exists a measurable function $\tilde u$ on $\B _n$ such that 
\[
\dbar \tilde u = \alpha \quad \text{and} \quad \int _{\B _n} |\tilde u|^2 e^{-\psi} (1-|z|^2)dV < +\infty.
\]
Then there is a measurable function $u$ on $\B _n$ such that 
\[
\dbar u = \alpha \quad \text{and} \quad \int _{\B_n}|u|^2e^{-\psi} dV \le \frac{1}{\delta} \int _{\B _n} |\alpha|^2_{\omega _P}e^{-\psi} dV.
\]
\end{d-thm}

\begin{proof}
One chooses $\omega = \frac{\ii}{2} \di \dbar |z|^2$, $\kappa = \psi$, $\Theta = \delta \omega _P$  and $\eta = \log \frac{1}{1-|z|^2}$ in Theorem \ref{twist-2-thm}.
\end{proof}

If we want to reduce further the lower bounds on the complex Hessian of $\psi$, we have to pay for it by restricting the forms $\alpha$ for which the $\dbar$-equation can be solved.  We have the following theorem.

\begin{d-thm}\label{ball-euc-dbar-2+-}
Let $\psi \in L^1_{\ell oc}(\B_n)$ be a weight function such that 
\[
\ii \di \dbar \psi \ge -\omega _P.
\]
Fix any $(0,1)$-form $\alpha$ such that 
\[
\dbar \alpha = 0 \quad \text{and} \quad \int _{\B_n}|\alpha|^2e^{-\psi} dV < +\infty.
\]
Assume there exists a measurable function $\tilde u$ on $\B _n$ such that 
\[
\dbar \tilde u = \alpha \quad \text{and} \quad \int _{\B _n} |\tilde u|^2 e^{-\psi} (1-|z|^2)dV < +\infty.
\]
Then there is a measurable function $u$ on $\B _n$ such that 
\[
\dbar u = \alpha \quad \text{and} \quad \int _{\B_n}|u|^2e^{-\psi} dV \le e \int _{\B _n} |\alpha|^2e^{-\psi} dV.
\]
\end{d-thm}

\begin{proof}
Choose $\omega = \Theta = \frac{\ii}{2} \di \dbar |z|^2$, $\kappa = \psi +|z|^2$, and $\eta = \log \frac{1}{1-|z|^2}$ in Theorem \ref{twist-2-thm}.  We then compute that 
\[
\ii \di \dbar \kappa +{\rm Ricci}(\omega) + \ii \di \dbar \eta +(\ii \di \dbar \eta -\ii \di\eta \wedge \dbar \eta )-\Theta = \ii \di \dbar \psi +\omega _B \ge 0.
\]
We thus obtain a function $u$ such that $\dbar u = \alpha$ and 
\[
\int _{\B _n} |u|^2e^{-(\xi +|z|^2)}dV \le \int _{\B_n}|\alpha|^2e^{-(\psi + |z|^2)} dV \le \int _{\B_n}|\alpha|^2e^{-\psi} dV.
\]
Since 
\[
\int _{\B _n} |u|^2e^{-\xi }dV \le e \int _{\B _n} |u|^2e^{-(\xi + |z|^2)}dV,
\]
the proof is complete.
\end{proof}

\subsubsection{Strictly pseudoconvex domains in $\C ^n$}

To a large extent, the situation in the unit ball carries over to strictly pseudoconvex domains.  The key is the Bergman kernel, and the celebrated theorem of Fefferman on its asymptotic expansion.

To state and prove our result, let us recall some basic facts about the Bergman kernel of a smoothly bounded domain $\Omega \subset \C ^n$.  Consider the spaces 
\[
L^2(\Omega) := \left \{ f : \Omega \to \C \ ;\ \int _{\Omega} |f|^2 dV < +\infty \right \} \quad \text{and}\quad \sa ^2(\Omega) := L^2(\Omega) \cap \co (\Omega).
\]
By Bergman's Inequality, $\sa ^2(\Omega)$ is a closed subspace, hence a Hilbert space, and thus the orthogonal projection $P _{\Omega} : L^2(\Omega) \to \sa ^2(\Omega)$ is a bounded operator.  This projection operator, called the Bergman projection, is an integral operator:
\[
(P_{\Omega}f)(z) = \int _{\Omega}K_{\Omega}(z,\bar w) f(w) dV(w).
\]
The kernel $K_{\Omega}$ is called the Bergman kernel, and it is a holomorphic function of $z$ and $\bar w$.  One has the formula 
\[
K_{\Omega}(z,\bar w) = \sum _{j=1} ^{\infty} f_j(z)\overline{f_j(w)}
\]
where $\{f_1,f_2,...\}\subset \sa ^2(\Omega)$ is any orthonormal basis.  In the special case of the unit ball, the Bergman kernel can be computed explicitly:
\[
K_{\B_n}(z,\bar w) = \frac{c_n}{(1-z\cdot \bar w)^{n+1}}.
\]

The Bergman kernel can be used to define a K\"ahler metric $\omega _B$ on $\Omega$, called the Bergman metric.  The definition is
\[
\omega _B (z) := \ii \di \dbar \log K_{\Omega}(z,\bar z).
\]

The theorem of Fefferman states that, near a give point $P\in \di \Omega$, the Bergman metric is asymptotic to the Bergman metric of a ball whose boundary closely osculates $\di \Omega$ at $P$.  With Fefferman's theorem, Example \ref{ball-example}, and a little more work, one can prove the following result.

\begin{d-thm}\label{spcvx-cn-sbg}
Let $\Omega \relcomp \C ^n$ be a domain with strictly pseudoconvex boundary.  Then there exists a positive constant $c$ such that 
\[
z \mapsto c\log K_{\Omega}(z,\bar z) \in {\rm SBG}_1(\Omega).
\]
Moreover, any such constant $c$ can be at most $\frac{1}{n+1}$.
\end{d-thm}

\begin{rmk}
There exist strictly pseudoconvex domains $\Omega$ for which the largest possible constant $c$ that can be chosen in Theorem \ref{spcvx-cn-sbg} is strictly less than $\frac{1}{n+1}$.
\red
\end{rmk}

But in fact, one can do a little better.  

\begin{d-thm}\label{spcvx-cn-qsbg}
Let $\Omega \relcomp \C ^n$ be a domain with strictly pseudoconvex boundary, and write 
\[
\eta (z) := \frac{1}{n+1} \log K_{\Omega}(z,\bar z).
\]
Then there exists a positive constant $C$ such that 
\[
\ii \di \dbar \eta - \ii \di \eta \wedge \dbar \eta \ge - C \ii \di \dbar |\cdot |^2.
\]
Moreover, the result fails if one replaces $\frac{1}{n+1}$ by a larger constant.
\end{d-thm}

\begin{proof}[Idea of proof]
From Fefferman's Theorem, we know that in the complement of a sufficiently large compact subset $K \relcomp \Omega$, one can achieve the conclusion of the theorem with $C$ arbitrarily small.  Compactness of $K$ and smoothness of the Bergman kernel in the interior of $\Omega$ takes care of the estimate on $K$.
\end{proof}

In the current state of the art, we know that Theorem \ref{spcvx-cn-sbg} also holds for domains of finite type in $\C ^2$, and convex domains of finite type in arbitrary dimension, but the conclusion of Theorem \ref{spcvx-cn-sbg} is not known to be true (resp. false) in every (resp. any) smoothly bounded pseudoconvex domain.  We also don't have such a precise version of Theorem \ref{spcvx-cn-qsbg} for domains that are not strictly pseudoconvex.  And given our current understanding of domains of finite type, the latter problem could be very difficult.

Let us now return to our H\"ormander-type theorems in the setting of strictly pseudoconvex domains.  We have the following analogues of the results for the ball.

\begin{d-thm}\label{spcvx-cn-dbar-1}
Let $\Omega \relcomp \C ^n$ be a domain with smooth, strictly pseudoconvex boundary.  Let $\psi \in L^1_{\ell oc}(\Omega)$ be a weight function, and assume there exists a positive constant $\delta$ such that 
\[
\ii \di \dbar \psi \ge -(1-\delta)\frac{1}{n+1}\omega _B.
\]
Then for any $(0,1)$-form $\alpha$ such that 
\[
\dbar \alpha = 0 \quad \text{and} \quad \int _{\Omega}|\alpha|^2_{\omega _B}e^{-\psi} dV < +\infty
\]
there exists a locally integrable function $u$ such that 
\[
\dbar u = \alpha \quad \text{and} \quad \int _{\Omega} |u|^2 e^{-\psi} dV \le \frac{M}{\delta^2}  \int _{\Omega}|\alpha|^2_{\omega _B}e^{-\psi} dV,
\]
where the constant $M$ depends only on the constant $C$ in Theorem \ref{spcvx-cn-qsbg} and the diameter of $\Omega$.
\end{d-thm}

\begin{proof}
Let $B$ be the smallest Euclidean ball containing $\Omega$ and let $P $ denote the center of $B$.  In Theorem \ref{twist-1-thm}, we let $\omega = \frac{\ii}{2} \di \dbar |z|^2$, $\kappa = \psi + C(1+\tfrac{\delta}{2})|z-P|^2$ where $C$ is as in Theorem \ref{spcvx-cn-qsbg}, $\Theta = \frac{\delta}{2(n+1)} \omega _B$ and $\eta (z) = \frac{1}{n+1}\log K_{\Omega}(z,\bar z)$.  Then 
\[
\ii (\di \dbar \kappa + {\rm Ricci}(\omega) +(1-\tfrac{\delta}{2})\di \dbar \eta + (1+\tfrac{\delta}{2})(\di \dbar \eta - \di \eta \wedge \dbar \eta ))- \Theta  \ge \ii \di \dbar \psi + (1-\delta )\frac{\omega _B}{n+1} \ge 0.
\]
Thus once again the hypotheses of Theorem \ref{twist-1-thm} hold, and we have a function $u$ satisfying $\dbar u =\alpha$ and 
\[
\int _{\Omega} |u|^2 e^{-(\psi+C(1+\tfrac{\delta}{2})|z-P|^2)} dV \le \frac{2(2+\delta)}{\delta^2}  \int _{\Omega}|\alpha|^2_{\omega _B}e^{-(\psi +C(1+\tfrac{\delta}{2})|z-P|^2)} dV \le \frac{2(2+\delta)}{\delta^2}  \int _{\Omega}|\alpha|^2_{\omega _B}e^{-\psi} dV .
\]
It follows that 
\[
\int _{\Omega} |u|^2 e^{-\psi} dV \le M' \int _{\Omega} |u|^2 e^{-(\psi+C(1+\tfrac{\delta}{2})|z-P|^2)} dV \le \frac{M}{\delta^2}  \int _{\Omega}|\alpha|^2_{\omega _B}e^{-\psi} dV.
\]
Obviously $M$ depends only on $C$ and the diameter of $\Omega$, and thus the proof is complete.
\end{proof}

Next, let us apply Method II.

\begin{d-thm}\label{spcvx-cn-dbar-2}
Let $\Omega \relcomp \C ^n$ be a domain with smooth, strictly pseudoconvex boundary, and denote by $\rho$ any smooth function with values in $(0,1)$, that agrees with the distance to $\di \Omega$ near $\di \Omega$.  Let $\psi \in L^1_{\ell oc}(\Omega)$ be a weight function, and assume there is a positive constant $\delta$ such that 
\[
\ii \di \dbar \psi \ge -(1-\delta)\omega _B.
\]
Fix any $(0,1)$-form $\alpha$ such that 
\[
\dbar \alpha = 0 \quad \text{and} \quad \int _{\Omega}|\alpha|^2_{\omega _B}e^{-\psi} dV < +\infty.
\]
Assume there exists a measurable function $\tilde u$ on $\Omega$ such that 
\[
\dbar \tilde u = \alpha \quad \text{and} \quad \int _{\Omega} |\tilde u|^2 e^{-\psi} \rho dV < +\infty.
\]
Then there is a measurable function $u$ on $\Omega$ such that 
\[
\dbar u = \alpha \quad \text{and} \quad \int _{\Omega}|u|^2e^{-\psi} dV \le \frac{M}{\delta} \int _{\Omega} |\alpha|^2_{\omega _B}e^{-\psi} dV,
\]
where the constant $M$ depends only on the constant $C$ in Theorem \ref{spcvx-cn-qsbg} and the diameter of $\Omega$.
\end{d-thm}

\begin{proof}
First let us note that Fefferman's Theorem (and in fact, a much softer argument) implies that, with $\eta (z) = \frac{1}{n+1} \log K_{\Omega}(z,\bar z)$,  
\[
A^{-1}\log \rho \le - \eta \le A\log \rho 
\]
for some constant $A$.  

Once again let $B$ be the smallest Euclidean ball containing $\Omega$ and $P$ the center of $B$.  In Theorem \ref{twist-2-thm}, we let $\omega = \frac{\ii}{2} \di \dbar |z|^2$, $\kappa = \psi + C|z-P|^2$ with $C$ as in Theorem \ref{spcvx-cn-qsbg}, $\Theta = \frac{\delta}{(n+1)} \omega _B$ and, as already mentioned, $\eta (z) = \frac{1}{n+1}\log K_{\Omega}(z,\bar z)$.  
Then we have 
\[
\int _{\Omega} |\tilde u|^2 e^{-(\psi+\eta)} dV \sim \int _{\Omega} |\tilde u|^2 e^{-\psi}\rho dV < +\infty.
\]
We calculate that
\[
\ii (\di \dbar \kappa + {\rm Ricci}(\omega) +\di \dbar \eta + (\di \dbar \eta - \di \eta \wedge \dbar \eta ))- \Theta  \ge  \ii \di \dbar \psi + (1-\delta )\frac{\omega _B}{n+1} \ge 0.
\]
Thus the hypotheses of Theorem \ref{twist-2-thm} hold, and we have a function $u$ satisfying $\dbar u =\alpha$ and 
\[
\int _{\Omega} |u|^2 e^{-(\psi+C|z-P|^2)} dV \le \frac{1}{\delta}  \int _{\Omega}|\alpha|^2_{\omega _B}e^{-(\psi +C|z-P|^2)} dV \le \frac{1}{\delta}  \int _{\Omega}|\alpha|^2_{\omega _B}e^{-\psi} dV .
\]
It follows that 
\[
\int _{\Omega} |u|^2 e^{-\psi} dV \le M \int _{\Omega} |u|^2 e^{-(\psi+C|z-P|^2)} dV \le \frac{M}{\delta}  \int _{\Omega}|\alpha|^2_{\omega _B}e^{-\psi} dV,
\]
which completes the proof.
\end{proof}

Finally, if we want to reduce further the lower bounds on the complex Hessian of $\psi$, we may do so, as in the case of the unit ball, at the cost of restricting further the forms for which we can solve $\dbar$.  We have the following generalization of Theorem \ref{ball-euc-dbar-2+-}.

\begin{d-thm}\label{spcvx-cn-dbar-2+-}
Let $\Omega$ and $\rho$ be as in Theorem \ref{spcvx-cn-dbar-2}, and let $\psi \in L^1_{\ell oc}(\Omega)$ be a weight function satisfying
\[
\ii \di \dbar \psi \ge -\omega _B.
\]
Fix any $(0,1)$-form $\alpha$ such that 
\[
\dbar \alpha = 0 \quad \text{and} \quad \int _{\Omega}|\alpha|^2e^{-\psi} dV < +\infty.
\]
Assume there exists a measurable function $\tilde u$ on $\Omega$ such that 
\[
\dbar \tilde u = \alpha \quad \text{and} \quad \int _{\Omega} |\tilde u|^2 e^{-\psi} \rho dV < +\infty.
\]
Then there is a measurable function $u$ on $\Omega$ such that 
\[
\dbar u = \alpha \quad \text{and} \quad \int _{\Omega}|u|^2e^{-\psi} dV \le M \int _{\Omega} |\alpha|^2e^{-\psi} dV,
\]
where $M$ depends only on the constant $C$ in Theorem \ref{spcvx-cn-qsbg} and the diameter of $\Omega$.
\end{d-thm}

\begin{proof}
Let $P \in \C^n$ be as in the proofs of Theorems \ref{spcvx-cn-dbar-1} and \ref{spcvx-cn-dbar-2}.  In Theorem \ref{twist-2-thm}, let $\omega = \frac{\ii}{2} \di \dbar |z|^2$, $\kappa = \psi +(C+1)|z-P|^2$, $\Theta = \ii \di \dbar |z|^2$ and $\eta (z) =\frac{1}{n+1} \log K_{\Omega}(z,\bar z)$.  Then as before,  
\[
\int _{\Omega} |\tilde u|^2 e^{-(\psi+\eta)} dV \sim \int _{\Omega} |\tilde u|^2 e^{-\psi}\rho dV < +\infty,
\]
and
\[
\ii (\di \dbar \kappa + {\rm Ricci}(\omega) +\di \dbar \eta + (\di \dbar \eta - \di \eta \wedge \dbar \eta ))- \Theta  \ge \ii \di \dbar \psi + \frac{\omega _B}{n+1} \ge 0.
\]
Thus the hypotheses of Theorem \ref{twist-2-thm} hold, and we have a function $u$ satisfying $\dbar u =\alpha$ and 
\[
\int _{\Omega} |u|^2 e^{-(\psi+(C+1)|z-P|^2)} dV \le \frac{1}{\delta}  \int _{\Omega}|\alpha|^2e^{-(\psi +(C+1)|z-P|^2)} dV \le   \int _{\Omega}|\alpha|^2e^{-\psi} dV .
\]
Again we conclude that 
\[
\int _{\Omega} |u|^2 e^{-\psi} dV \le M \int _{\Omega} |u|^2 e^{-(\psi+(C+1)|z-P|^2)} dV \le M  \int _{\Omega}|\alpha|^2e^{-\psi} dV,
\]
as desired.
\end{proof}

\section{Extension theorems}\label{S:extension}

\subsection{Extension from a hypersurface cut out by a holomorphic function}

The following result is the main theorem of \cite{ot}.

\begin{d-thm}[Ohsawa-Takegoshi]\label{ot-vanilla}
Let $\Omega$ be a bounded pseudoconvex domain in $\C^n$, $H \subset \C^n$ a complex hyperplane, and $\psi :\Omega \to \R \cup \{-\infty\}$ a plurisubharmonic function.  Then there exists a constant $C$ depending only on the diameter of $\Omega$ such that for any holomorphic function $f$ on $\Omega \cap H$ satisfying 
\[
\int _{\Omega \cap H}e^{-\psi}|f|^2 dV_{n-1} <\infty
\]
where $dV_{n-1}$ denotes the $(2n-2)$-dimensional Lebesgue measure, there exists a holomorphic function $F$ on $\Omega$ satisfying $F|\Omega \cap H=f$ and 
\[
\int _{\Omega} e^{-\psi}|F|^2 dV_n \le C\int _{\Omega \cap H} e^{-\psi}|f|^2 dV_{n-1}.
\]
\end{d-thm}

\begin{s-rmk}
Theorem \ref{ot-vanilla} was given new proofs by McNeal \cite{jeff-ot} (who did not state the theorem, but did construct $L^2$ extensions for his purposes in that article) and Berndtsson \cite{bo-96} at around the same time, using different methods.  Siu \cite{s-96} also gave a proof at about the same time, that on the one hand was more general, but on the other hand had stronger assumptions on the curvature, which he later removed in \cite{s-02}.  We will come back to Siu's Theorem shortly.
\red
\end{s-rmk}

In \cite{ot}, Theorem \ref{ot-vanilla} is established as an immediate corollary of the following result.

\begin{d-thm}[Ohsawa-Takegoshi]\label{ot-stein}
Let $X$ be a Stein manifold of dimension $n$, $\psi$ a plurisubharmonic function on $X$ and $s$ a holomorphic function on $X$ such that $ds \neq 0$ on any branch of $s^{-1}(0)$.  Let $Y := s^{-1}(0)$ and $Y_o:= \{ x\in Y\ ;\ ds(x) \neq 0\}$.  Let $g$ be a holomorphic $(n-1)$-form on $Y_o$ with 
\[
\int _{Y_o} e^{-\psi}\ii ^{n(n-1)} g\wedge \bar g < +\infty.
\]
Then there exists a holomorphic $n$-form $G$ on $X$ such that 
\[
G= g \wedge ds \quad \text{ on }Y_o
\]
and 
\[
\int _X \frac{e^{-\psi}}{(1+|s|^2)^2}\ii ^{(n+1)n}G\wedge \bar G \le 1620 \pi \int _{Y_o} e^{-\psi}\ii ^{n(n-1)} g\wedge \bar g < +\infty.
\]
\end{d-thm}

Manivel was the first to generalize Theorem \ref{ot-stein} to extension of holomorphic sections of a holomorphic vector bundle, from a subvariety cut out by a global section of a holomorphic vector bundle.  In \cite{manivel}, he established the following result.

\begin{d-thm}[Manivel]\label{ot-manivel}
Let $X$ be a Stein manifold of dimension $n$, $E$ a vector bundle of rank $d$ on $X$, $s \in H^0(X,E)$ a section of $E$ that is generically transverse to the zero section, and 
\[
Y := \left \{ x\in X\ ;\ s(x)=0, \wedge ^d ds(x) \neq 0 \right \}.
\]
Let $\pi : \p (E) \to X$ denote the projectivization of $E$.  The section $s$ defines a section $\sigma$ of $\co _{E^*}(-1) \to \p (E)$ over $\pi ^{-1} (X-s^{-1}(0))\subset \p (E)$.

We assume that $\co _{E^*}(-1)$ is equipped with a Hermitian metric $e^{-\gamma}$, and $X$ with a positive closed $(1,1)$-form $\Omega$, such that 
\[
\pi ^*\Omega \ge \ii \di \dbar \gamma \quad \text{on }\p (E).
\]
Let $L \to X$ be a holomorphic line bundle with Hermitian metric $e^{-\psi}$ such that 
\[
\frac{1}{d}\pi ^* \ii \di \dbar \psi \ge \alpha \pi ^* \Omega + \ii \di \dbar \gamma.
\]
We suppose also that $E$ admits a Hermitian metric such that $|s|\le \kappa |\sigma|$, where $\kappa$ is real and strictly positive, and we equip $L\tensor \det E^*$ with the associated Hermitian metric.

Then for each plurisubharmonic function $\xi$ on $X$, each positive real number $\beta$, and each holomorphic sections $g$ of $K_Y \tensor L \tensor \det E^* \to Y$ such that 
\[
\int _Y e^{-\xi}|g|^2e^{-\psi} < +\infty,
\]
there exists a holomorphic section $G$ of $K_X\tensor L \to X$ such that 
\[
G|_Y = g \wedge (\wedge ^d ds) \quad \text{and} \quad \int _X \frac{e^{-\xi} |G|^2e^{-\psi}}{(|\sigma|^2e^{-\gamma})^{d-1}(1+|\sigma|^2e^{-\gamma})^{1+\beta}} \le M \int _Y e^{-\xi}|g|^2e^{-\psi},
\]
where the constant $M$ depends only on $d$, $\alpha$, $\kappa$ and $\beta$.
\end{d-thm}

\begin{s-rmk}
In Theorems \ref{ot-stein} and \ref{ot-manivel}, one uses the canonical bundle (whose sections are holomorphic forms of top degree) in order to avoid using a volume form for the $L^2$-norms that arise.  If one does not work with canonical forms, then the Ricci curvature enters the hypotheses.  It is easy to pass back and forth between the two cases, since the square of the canonical bundle is the determinant of the (real) tangent bundle, and a volume form is just a metric for the dual of the canonical bundle.  In a K\"ahler manifold, if the volume form is the determinant of the K\"ahler metric, the curvature of the metric for the canonical bundle induced by the reciprocal of the volume form  is precisely the negative of the Ricci curvature.
\red
\end{s-rmk}

In his work on the deformation invariance of plurigenera for complex projective manifolds \cite{s-02}, Siu gave another form of the $L^2$ extension theorem, in which he introduced a new perspective on the twisted technique, which is the perspective we took in the discussion in Section \ref{twisted-prelims-section}, of twisting the metric of the line bundle, rather than replacing $\dbar$ with $\dbar \circ \sqrt{\tau}$ for some function $\tau$.  The statement of Siu's Theorem is as follows.

\begin{d-thm}[Siu \cite{s-02}]\label{siu-ot}
Let $X$ be a complex manifold and $L \to X$ a holomorphic line bundle with singular Hermitian metric $e^{-\vp}$ having non-negative curvature current.  Let $w \in \co (X)$ be a bounded holomorphic function with non-singular zero set $Z$, so that $dw$ is nowhere zero at any point of $Z$.  Assume there exists a hypersurface $V$ in $Y$ such that no component of $Z$ is contained in $V$, and $X-V$ is a Stein manifold.  If $f$ is an $L$-valued holomorphic $(n-1)$-form on $Z$ satisfying 
\[
\int _Z |f|^2e^{-\vp} < +\infty
\]
then there is a holomorphic $n$-form $F$ on $Y$ such that 
\[
F|_{Z} = f \wedge dw \quad \text{and} \quad \int _Y |F|^2e^{-\vp} \le 8\pi \sqrt{2+\tfrac{1}{e}}\left (\sup _Y |w|^2 \right )\int _Z |f|^2e^{-\vp}.
\]
\end{d-thm}

Siu's version of the $L^2$ extension theorem has been widely used in many applications to algebraic geometry.  Siu himself used it as one of two key tools in the proof of the deformation invariance of plurigenera, the second tool being Skoda's ideal membership theorem.  P\u aun \cite{mihai-pluri} was later able to simplify Siu's proof of the deformation invariance of plurigenera by eliminating the need for Skoda's Theorem, and at the same time improve the pluricanonical extension theorem itself.  Let us digress briefly to discuss the theorems of Siu and Paun. 

The $m^{\rm th}$ plurigenus of a compact complex manifold $Y$ is the dimension $h^0(Y, K_Y ^{\tensor m})$ of the space $H^0(Y, K_Y^{\tensor m})$ of global sections of the $m^{\rm th}$ tensor power of the canonical bundle $K_Y$ of $Y$.  The problem of the deformation invariance of plurigenera, which asks whether the plurigenera are invariant in families, is a long-standing question.  Let us make the statement more precise.  Recall that a holomorphic family is a proper holomorphic immersion $\pi : X \to \D$ from a complex manifold $X$ to the unit disk.  Since $\pi$ is proper, each fiber $X_t := \pi ^{-1}(t)$ is a compact complex variety, and since $\pi$ is immersion, the $X_t$ are pairwise-diffeomorphic complex manifolds.  However, their complex structures might vary.  Fundamental work of Griffiths on deformation of Hodge structures showed that the genera, i.e., the dimensions $h^0(X_t,K_{X_t})$ of the global sections of the canonical bundle of $X_t$, are independent of $t \in \D$ (and we will see in a moment that this invariance also follows from the Ohsawa-Takegoshi Extension Theorem \ref{siu-ot}).  The deformation invariance of plurigenera is precisely the statement that the dimensions $h^0(X_t, K_{X_t}^{\tensor m})$ are independent of $t\in \D$.

By Montel's Theorem (together with Bergman's Inequality, a.k.a., the sub-mean value property for plurisubharmonic functions), one can see that if we have a sequence $t_j \to t_o$ and sections $s_j \in H^0(X_{t_j}, K_{X_{t_j}}^{\tensor m})$, then there is a subsequence converging to a section $s \in H^0(X_{t_o}, K_{X_{t_o}}^{\tensor m})$.  Therefore $t \mapsto h^0(X_t , K_{X_t}^{\tensor m})$ is upper semi-continuous.  In order to show that the dimension does not jump, it suffices to show that given a section $s \in H^0(X_o, K_{X_o}^{\tensor m})$, there is a section $S \in H^0(X, K_X^{\tensor m})$ such that 
\[
S|_{X_o} = s\tensor d\pi ^{\tensor m}.
\]
Indeed, then the sections $(S|_{X_t})/(d\pi ^{\tensor m})$ are sections of $H^0(X_t, K_{X_t} ^{\tensor m})$, so $t \mapsto h^0(X_t , K_{X_t}^{\tensor m})$ is lower semi-continuous, hence continuous, hence, since it is integer-valued, constant.  In this way, we see that the deformation invariance of plurigenera follows from an extension theorem for pluricanonical sections.

\begin{rmk}
Note that if $m =1$, then Siu's Extension Theorem \ref{siu-ot}, with $L\to X$ taken to be the trivial bundle, shows that the genus (i.e., the first plurigenus) is invariant in families.  This result was previously known through important work of Griffiths using the deformation of Hodge structures.
\red
\end{rmk}

In the late 1960s and early 1970s Iitaka showed that the plurigenera of surfaces are invariant in families.  In 1986 Nakayama showed that the plurigenera are not invariant in families that are not K\"ahler.  At that point, it was conjectured only that plurigenera were invariant in families of projective manifolds of general type, and this result was proved by Siu in his celebrated paper \cite{s-98}.  A short while later, Siu proved that the plurigenera are invariant for \emph{any} family of projective manifolds.  (A projective family is a proper holomorphic immersion $\pi :X \to \D$ together with a line bundle $A \to X$ that admits a smooth metric of strictly positive curvature.)

In fact, the $L^2$ Extension Theorem suggests a twisted version of the problem of deformation invariance of plurigenera:  If $Y$ is a compact complex manifold and $L \to Y$ is a holomorphic line bundle, we can define the $L$-twisted plurigenera 
\[
h^0(Y, K_Y ^{\tensor m}\tensor L).
\]
This was indeed done by Siu, who also proposed the result.  Far from being an unmotivated generalization, the methods of Siu were the catalyst for a flurry of incredible activity in binational geometry.  We will not discuss these results, as they lie well outside the scope of this article.  We can however, state Siu's extension theorem, and P\u aun's generalization, which was itself conjectured by Siu.

\begin{thm}[Siu \cite{s-02}]
Let $\pi : X \to \D$ be a projective holomorphic family, and let $L \to X$ be a holomorphic line bundle admitting a singular Hermitian metric $e^{-\vp}$ whose curvature current $\ii \di \dbar \vp$ is non-negative.  (Such an $L$ is called \emph{pseudoeffective}.) Assume, moreover, the the metric $e^{-\vp}$ restricts to $X_o = \pi ^{-1}(o)$ as a singular Hermitian metric, and moreover, $e^{-\vp}$ is locally integrable on $X_o$.  Then for any section $s \in H^0(X_0, K_{X_o} \tensor L)$ there exists a section $H^0(X, K_X^{\tensor m} \tensor L)$ such that 
\[
S|_{X_o} = s \tensor (d\pi)^{\tensor m}.
\]
\end{thm}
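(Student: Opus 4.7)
The plan is to use Siu's iterative construction, which applies the Ohsawa--Takegoshi theorem (Theorem \ref{siu-ot}) infinitely often and then takes a limit to remove an auxiliary ample bundle. The argument builds extensions of powers $s^{\tensor p}$ first, before addressing $s$ itself.

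First, use the projectivity hypothesis to fix an ample line bundle $A\to X$ with smooth strictly positively curved metric $e^{-\psi_A}$. Choose $k\ge 0$ so that the restriction map $H^0(X, K_X^{\tensor k}\tensor A)\to H^0(X_o, K_{X_o}^{\tensor k}\tensor A|_{X_o})$ is surjective (by Serre vanishing applied to the kernel sheaf $K_X^{\tensor k}\tensor A\tensor \co_X(-X_o)$). Let $\eta_1,\dots,\eta_N$ be extensions to $X$ of a basis of the target, so that $\sum_j|\eta_j|^2$ is a smooth semipositively curved metric on $K_X^{\tensor k}\tensor A$, strictly positive on $X_o$.

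The heart of the argument is an induction producing, for every integer $p\ge 1$ and every index $j$, a section
\[
F_{p,j}\in H^0\bigl(X,\ p(mK_X+L)+kK_X+A\bigr)
\]
with $F_{p,j}|_{X_o}=s^{\tensor p}\tensor \eta_j$ and a uniform $L^2$ bound. The inductive step builds a singular semipositive Hermitian metric on the appropriate target bundle by combining (i) the inverse metric $\bigl(\sum_j |F_{p-1,j}|^2\bigr)^{-1}$, whose curvature current is semipositive because $\log\sum_j|F_{p-1,j}|^2$ is plurisubharmonic; (ii) the given metric $e^{-\vp}$ on $L$; and (iii) the smooth metric $\sum_j|\eta_j|^2$ accounting for the $K_X^{\tensor k}\tensor A$ factor. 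The section $s\tensor F_{p-1,j}|_{X_o}=s^{\tensor p}\tensor\eta_j$ is $L^2$-integrable for this metric because the $F_{p-1,j}$ factor cancels on $X_o$, reducing the integral to $\int_{X_o}|s|^2|\eta_j|^2 e^{-\vp}$, which is finite by the local integrability assumption on $e^{-\vp}|_{X_o}$. Theorem \ref{siu-ot} applied with $w=\pi$ then produces $F_{p,j}$ with a constant depending only on $\sup_X|\pi|^2$ and universal data, hence \emph{independent of $p$}.

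To finish, form the sequence of singular Hermitian metrics $\bigl(\sum_j|F_{p,j}|^2e^{-p\vp}\bigr)^{1/p}$ on the line bundle $mK_X+L+\tfrac 1p(kK_X+A)$; the uniform bounds above let these converge (after extracting a subsequence) to a singular Hermitian metric $e^{-\Psi}$ on $mK_X+L$ with non-negative curvature current. Near $X_o$ this metric is bounded below by $|s|^2 e^{-\vp}$, so one final application of Theorem \ref{siu-ot} extends $s$ to the desired $S\in H^0(X,K_X^{\tensor m}\tensor L)$ satisfying $S|_{X_o}=s\tensor(d\pi)^{\tensor m}$. The main obstacle is the uniformity of the Ohsawa--Takegoshi constant across the iteration: if it grew with $p$, the $p$-th root procedure would fail, which is precisely why the quantitative form of Theorem \ref{siu-ot} (with constant depending only on $\sup|w|^2$) is essential. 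A secondary subtlety is verifying that $e^{-\Psi}$ is not identically $-\infty$ near $X_o$; this uses the sub-mean value property for $\tfrac 1p\log\sum_j|F_{p,j}|^2$ together with the local integrability of $e^{-\vp}|_{X_o}$, showing why that hypothesis enters the statement.
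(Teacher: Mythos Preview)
The paper does not prove this theorem. It is stated, with attribution to Siu \cite{s-02}, as part of the discussion surrounding the deformation invariance of plurigenera; the paper then states P\u aun's generalization and moves on to other topics without supplying a proof of either result. So there is nothing in the paper to compare your argument against.

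That said, your sketch does follow the broad outline of Siu's tower construction (as later streamlined by P\u aun): iterate Ohsawa--Takegoshi with a uniform constant to build extensions of tensor powers, use these to manufacture a singular metric on $mK_X+L$, then apply extension once more. A few cautions if you want to turn this into a real proof. First, the statement as printed has $s\in H^0(X_o,K_{X_o}\tensor L)$, but for the conclusion $S|_{X_o}=s\tensor(d\pi)^{\tensor m}$ to make sense one needs $s\in H^0(X_o,K_{X_o}^{\tensor m}\tensor L)$; your bookkeeping with $p(mK_X+L)$ already reflects this. Second, the inductive step requires that the bundle being extended be written as $K_X$ tensored with something carrying a semipositively curved metric; you should check that your combination of $\bigl(\sum_j|F_{p-1,j}|^2\bigr)^{-1}$, $e^{-\vp}$, and $\sum_j|\eta_j|^2$ lands on the correct line bundle $(p(mK_X+L)+kK_X+A)-K_X$, which forces a particular splitting of the $m$ copies of $K_X$ at each step. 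Third, the passage to the limit metric $e^{-\Psi}$ is the most delicate point and is only gestured at here; one needs uniform upper bounds (not just $L^2$ bounds) on $\tfrac1p\log\sum_j|F_{p,j}|^2$ and a compactness argument for quasi-plurisubharmonic functions to extract a limit that is not identically $-\infty$.
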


\begin{thm}[Paun \cite{mihai-pluri}]
Let $\pi : X \to \D$ be a projective holomorphic family, and let $L \to X$ be a holomorphic line bundle admitting a singular Hermitian metric $e^{-\vp}$ whose curvature current $\ii \di \dbar \vp$ is non-negative.  (Such an $L$ is called \emph{pseudoeffective}.)  Fix any smooth K\"ahler metric $\omega$ for $X$.  Then for any section $s \in H^0(X_0, K_{X_o} \tensor L)$ such that 
\[
\int _{X_o} \frac{|s|^2 e^{-\vp}}{\omega ^{m-1}} < +\infty,
\]
there exists a section $H^0(X, K_X^{\tensor m} \tensor L)$ such that 
\[
S|_{X_o} = s \tensor (d\pi)^{\tensor m}.
\]
\end{thm}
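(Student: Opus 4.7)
The plan is to implement P\u aun's iterative extension strategy, a refinement of Siu's approach. Using the projectivity of $\pi:X\to\D$, fix an ample line bundle $A\to X$ with smooth Hermitian metric $h_A=e^{-\vp_A}$ of strictly positive curvature, and a nonzero section $u\in H^0(X,K_X\otimes A)$ whose restriction $u_o:=u|_{X_o}$ is not identically zero (such $u$ exists by the ampleness of $A$). The core of the argument is an induction producing sections
\[
S_k\in H^0\bigl(X,\,K_X^{\otimes(km+1)}\otimes L^{\otimes k}\otimes A\bigr),\qquad k\geq 0,
\]
with $S_0=u$ and $S_k|_{X_o}=s^{\otimes k}\otimes u_o$, satisfying a uniform $L^2$ bound $\int_X|S_k|^2_{h_k}\,dV_\omega\leq C$ with $C$ independent of $k$. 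The Hermitian metric $h_k$ is built from $h_A$ on $A$, the tensor power $e^{-k\vp}$ on $L^{\otimes k}$, and a singular metric on $K_X^{\otimes(km+1)}$ defined recursively as a suitable power of $|S_{k-1}|^{-2}$ (so that it has non-negative curvature current). The inductive step is Theorem \ref{siu-ot} applied to extend $s^{\otimes k}\otimes u_o$ from $X_o$ to $X$: strictly positive curvature is supplied by $h_A$, while the other factors of $h_k$ have non-negative curvature by construction and by the hypothesis on $e^{-\vp}$.

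With the $S_k$ in hand, taking $k$-th roots produces a sequence of singular Hermitian metrics $H_k$ on $K_X^{\otimes(m+1/k)}\otimes L$ whose curvature currents are non-negative and whose local quasi-plurisubharmonic potentials are uniformly bounded above, via the sub-mean-value inequality applied to the uniform $L^2$ bound on $S_k$. Standard compactness of quasi-psh functions extracts a weak limit $h_\infty$, a singular Hermitian metric with non-negative curvature current on $mK_X\otimes L$. The essential property of $h_\infty$ is that its restriction to $X_o$ is controlled by $|s|^{-2}$, a comparison following from the construction of the $h_k$ and from the hypothesis $\int_{X_o}|s|^2 e^{-\vp}/\omega^{m-1}<+\infty$.

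To produce the extension $S$, view $s$ as a section of $K_{X_o}\otimes\bigl((m-1)K_{X_o}\otimes L|_{X_o}\bigr)$ and apply Theorem \ref{siu-ot} one last time, with the twisting bundle $(m-1)K_X\otimes L$ equipped with the metric $h_\infty^{(m-1)/m}\cdot e^{-\vp/m}$. This twisted metric has non-negative curvature current because both $h_\infty$ and $e^{-\vp}$ do, and the domination of $h_\infty|_{X_o}$ by $|s|^{-2}$ guarantees the $L^2$ finiteness of $s$ needed to invoke the theorem. The resulting extension $S\in H^0(X,K_X^{\otimes m}\otimes L)$ satisfies $S|_{X_o}=s\otimes(d\pi)^{\otimes m}$, as required.

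The main obstacle is the uniform $L^2$ estimate in the induction. Each application of Ohsawa--Takegoshi introduces multiplicative constants that must not compound exponentially in $k$, lest the limit metric $h_\infty$ be trivial. Equally delicate is the control of the recursively defined singular metric on $K_X^{\otimes(km+1)}$: its singularities must be arranged so that, upon restriction to $X_o$, the integrability of $s^{\otimes k}\otimes u_o$ follows from exactly the hypothesis $\int_{X_o}|s|^2 e^{-\vp}/\omega^{m-1}<+\infty$, and not from a stronger assumption such as the local integrability of $e^{-\vp}|_{X_o}$ (as was required in Siu's earlier theorem). This last point is P\u aun's essential contribution: the $s$-twisted weight $|s|^2 e^{-\vp}$ enters only as a single combined quantity, and the iterative construction must respect this combination.
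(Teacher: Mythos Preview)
The paper does not prove this theorem: it is stated as a cited result from \cite{mihai-pluri} and the discussion immediately moves on. There is therefore no proof in the paper to compare your proposal against.

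That said, your sketch is a faithful outline of P\u aun's actual argument from the cited paper. The inductive tower of Ohsawa--Takegoshi extensions, the construction of a limit singular metric on $mK_X\otimes L$, and the final application of $L^2$ extension with that metric as twist are the right ingredients, and you correctly identify the key point distinguishing P\u aun's result from Siu's: the weight enters only through the combined quantity $|s|^2 e^{-\vp}/\omega^{m-1}$, not through separate integrability of $e^{-\vp}|_{X_o}$. One small note: the statement as printed in the paper has a typo --- the section $s$ should lie in $H^0(X_o, K_{X_o}^{\otimes m}\otimes L)$, not $H^0(X_o, K_{X_o}\otimes L)$; otherwise the appearance of $m$ in the integrand and in $(d\pi)^{\otimes m}$ is unmotivated. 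Your proposal implicitly reads the statement this way, which is correct.
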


Both of these results involve extension to $X$ of sections on a hypersurface $X_o$ that is cut out by a bounded holomorphic function, namely $\pi$.  By adjunction, the normal bundle of such hypersurfaces are trivial. Paun's Theorem (and hence Siu's Theorem) extends to more general hypersurfaces than these, as was proved by one of us \cite{dv-tak}.  One requires a version of Siu's Extension Theorem \ref{siu-ot} for such hypersurfaces, and the curvature of the (holomorphically extended) normal bundle of the hypersurface enters the picture, as we shall see below (cf. Theorem \ref{ot-tak}).

\subsection{Extension with ``gain"}

The term ``gain" refers to having extension with respect to weights that are not necessarily plurisubharmonic.  The largest class of weights for which extension, with $L^2$ estimates in terms of those weights, occurs is unknown and would be difficult to
precisely define. This class would certainly depend on the underlying geometry where one seeks extension. But there are several situations where extension with respect to not-necessarily-plurisubharmonic weights is known, to which we now turn.

\subsubsection{Ohsawa's Theorem: negligible weights}  Motivated by issues surrounding estimates for the Bergman kernel using induction on dimension, Ohsawa \cite{o-95} established the following result.

\begin{d-thm}[Ohsawa]\label{neg-wts}
Let $\Omega$ be a bounded pseudoconvex domain in $\C ^n$ containing the origin, and let $\Omega ' $ be the intersection of $\Omega$ with the complex hyperplane $\{z_n = 0\}$.  Then for any plurisubharmonic function $\psi$ on $\Omega$ such that 
\[
A_{\psi}:= \sup _{z\in \Omega} \psi(z) + 2\log |z_n| < +\infty,
\]
there exists a constant $C$ depending only on $A_{\psi}$, such that for any plurisubharmonic function $\nu$, and any function $f \in \co (\Omega ')$ satisfying 
\[
\int _{\Omega '} e^{-(\nu+\psi)}|f|^2 dV_{n-1} < +\infty,
\]
there exists $F \in \co (\Omega)$ such that $F|\Omega '= f$ and 
\[
\int _{\Omega} e^{-\nu} |F|^2dV_{n} \le C \int _{\Omega '} e^{-(\nu+\psi)}|f|^2 dV_{n-1}.
\]
\end{d-thm}

Thus one can "gain" some positivity for the weight $\nu$, in the sense that extension holds for $\nu$, even though $\nu$ is not as positively curved as the weight $\nu +\psi$.

It is also worth remarking that in his work \cite{o-01}, Ohsawa proved a much more general extension theorem that includes Theorem \ref{neg-wts}, and has a number of applications.  However, that theorem involves a somewhat non-constructive $L^2$ norm and some difficult-to-compute spaces of weights, so we will not discuss it here, sacrificing the generality of Ohsawa's result to stay as concrete as possible.

\subsubsection{Berndtsson's Theorem: Integrable algebraic singularity}

In the $L^2$ extension problem, one would like to make the extension as small as possible.  This smallness can be captured not only in the constant (i.e., the norm of the linear extension operator that is a consequence of the $L^2$ extension theorem), but also if one can carry out $L^2$ extension with respect to weights in the ambient space that are singular (albeit integrable) on the subvariety from which we are extending.  The first example of this sort of result was proved by Berndtsson \cite{bo-96},  though he did not state it as an explicit theorem.

\begin{d-thm}[Berndtsson]\label{bo-gain}
Let $\Omega \relcomp \C ^n$ be a pseudoconvex domain, and let $\vp$ be a plurisubharmonic function in $\Omega$.  Let $h \in \co (\Omega)$ satisfy $||h||_{\infty} \le 1$ and write $Z = h^{-1}(0)$ with $dh\neq 0$ on any component of $Z$.  Then for any holomorphic function $f \in \co (Z)$ such that 
\[
\int _{Z_{\rm reg}} |f|^2 e^{-\vp} < +\infty
\]
there exists a function $F \in \co (\Omega)$ such that $F|_{Z}=f$ and 
\[
\int _{\Omega} \frac{|F|^2e^{-\vp}}{|h|^{2s}} \le \frac{2\pi}{1-s} \int _{Z_{\rm reg}} \frac{|f|^2 e^{-\vp}}{|dh|^2}.
\]
\end{d-thm}

\begin{s-rmk}
Recall that the definition of a holomorphic function on a singular variety already implies that it is locally in some (ambient) neighborhood of each of the points of the variety.  Thus there is a local extension to begin with, and the point is to get global estimates.
\red
\end{s-rmk}

\begin{proof}[Proof of Theorem \ref{bo-gain}]
First we are going to obtain two a priori estimates from Theorem \ref{tbe}.  In the first, let $\tau = 1-|h|^{2(1-s)}$, $\psi = \vp$,  and $A = 2|h|^{2(1-s)}$.  Then 
\[
- \di \dbar \tau = \frac{(1-s)^2}{|h|^{2s}} dh \wedge d\bar h \quad \text{and} \quad \frac{\di \tau \wedge \dbar \tau}{A}  = \frac{(1-s)^2}{2|h|^{2s}} dh \wedge d\bar h
\]
Substituting into Theorem \ref{tbe} and using the positivity of $\ii \di \dbar \vp$ and pseudoconvexity of $\Omega$, we get the estimate 
\begin{equation}\label{first-est}
\frac{(1-s)^2}{2} \int _{\Omega} \frac{|d\bar h(\alpha)|^2}{|h|^{2s}}e^{-\vp} dV \le \int _{\Omega} (1+|h|^{2(1-s)})|\dbar ^*_{\vp}\alpha|^2e^{-\vp} dV
\end{equation}
Next we apply Theorem \ref{tbe} again with the function $\tau = \frac{1}{\pi} \log |h|^{-2(1-s)}$ and $A=\frac{2}{\pi |h|^{2(1-s)}}$.  Then 
\[
- \ii \di \dbar \tau = 2 (1-s)[Z] \quad \text{and} \quad \frac{\di \tau \wedge \dbar \tau}{A} = \frac{(1-s)^2}{2\pi|h|^{2s}}dh \wedge d\bar h,
\]
where $[Z]$ denotes the current of integration over $Z$ and we have used the Lelong-Poincar\'e formula.  We therefore get the estimate 
\begin{eqnarray}
\nonumber && 2 (1-s)\int _{Z} |d\bar h(\alpha)|^2 e^{-\vp} dV \\
\label{second-est} && \qquad \le \frac{1}{\pi} \int _{\Omega}(2|h|^{-2(1-s)}+\log |h|^{-2(1-s)}) |\dbar ^*_{\vp}\alpha|^2e^{-\vp} dV + \frac{(1-s)^2}{2\pi}\int _{\Omega} \frac{|dh (\alpha)|^2}{|h|^{2s}} e^{-\vp} dV
\end{eqnarray}
Substituting \eqref{first-est} into the last term on the right of \eqref{second-est}, we get 
\begin{eqnarray*}
&& 2(1-s) \int _{Z} |d\bar h(\alpha)|^2 e^{-\vp} dV\\
&& \qquad  \le \frac{1}{\pi} \int _{\Omega} \left ( 2 + |h|^{2(1-s)}\log (|h|^{-2(1-s)}) + |h|^{2(1-s)} + |h|^{4(1-s)} \right ) \frac{|\dbar ^*_{\vp} \alpha|^2}{|h|^{2(1-s)}} e^{-\vp} dV
\end{eqnarray*}

Now, by calculus,  $x(\log x^{-1}+2 +x +x^2) \le 4$ for $x \in (0,1]$, and thus we get 
\begin{equation}\label{final-est}
\int _{Z} |d\bar h(\alpha)|^2 e^{-\vp} dV  \le \frac{2}{(1-s)\pi} \int _{\Omega} \frac{|\dbar ^*_{\vp} \alpha|^2}{|h|^{2(1-s)}} e^{-\vp} dV.
\end{equation}

We define the $(0,1)$-current $g$ on $\Omega$ by 
\[
g = f \dbar \frac{1}{h}.
\]
Fix a domain $\Omega _o \relcomp \Omega$ with strictly pseudoconvex boundary.  By definition of distributional solution, 
\[
\dbar u = g \iff \int _{\Omega} \left < g , \alpha \right > e^{-\vp} = \int _{\Omega} u \overline{\dbar ^*_{\vp} \alpha} e^{-\vp} 
\]
for all smooth, $\dbar$-closed $(0,1)$-forms $\alpha$ with compact support.  Now, for all smooth forms $\alpha$ in the domain of $\dbar ^*_{\vp}$, \eqref{final-est} implies  
\begin{eqnarray*}
\left | \int _{\Omega} \left < g , \alpha \right > e^{-\vp} dV \right | & \le & \pi ^2 \int _{Z} \frac{|f|}{|dh|} d\bar h (\alpha)| e^{-\vp} dV \\
&\le & \frac{2\pi}{1-s} \int _{Z} \frac{|f|^2}{|dh|^2} e^{-\vp} dV \int _{\Omega} \frac{|\dbar ^*_{\vp} \alpha|^2}{|h|^{2(1-s)}} e^{-\vp} dV.
\end{eqnarray*}
It follows that there is a distribution $u$ satisfying 
\[
\dbar u = g \quad \text{and} \quad \int _{\Omega} |u|^2 |h|^{2(1-s)}e^{-\vp}dV \le  \frac{2\pi}{1-s} \int _{Z} \frac{|f|^2}{|dh|^2} e^{-\vp} dV.
\]
Finally, let 
\[
F= hu = h(u - \tfrac{f}{h}) +f.
\]
Then $F$ is holomorphic and satisfies 
\[
\int _{\Omega} \frac{|F|^2e^{-\vp}}{|h|^{2s}}dV \le  \frac{2\pi}{1-s} \int _{Z} \frac{|f|^2}{|dh|^2} e^{-\vp} dV.
\]
Observe also that $u - f/h$ is holomorphic away from $Z$, and since the singularities of $u$ and $f/h$ are the same, $u-f/h$ extends holomorphically across $Z$.  It follows that $F|_Z = f$, and the proof is finished.
\end{proof}

\subsubsection{Demailly's Theorem:  Logarithmic singularity}

In his paper \cite{dem-ot}, Demailly established a rather general result, a special case of which is the following theorem.  

\begin{d-thm}\label{dem-ot-thm}
Let $(X,\omega)$ be an $n$-dimensional Stein K\"ahler manifold, let $(L, e^{-\vp}) \to X$ be Hermitian holomorphic line bundles, and let $Z \subset X$ be a smooth complex hypersurface cut out by a holomorphic function $h \in \co (X)$.  Assume that
\[
\sup _X |h|^2 \le 1 \quad \text{and} \quad \ii \di \dbar \vp +{\rm Ricci}(\omega) \ge 0.
\]
Then for any $f \in H^0(Z, L|_Z)$ satisfying 
\[
\int _Z \frac{|f|^2 e^{-\vp}}{|dh|^2_{\omega}} \frac{\omega ^{n-1}}{(n-1)!}  < +\infty
\]
there exists a section $F \in H^0(X, L)$ such that 
\[
F|_{Y} = f \quad \text{and} \quad \int _X \frac{|F|^2e^{-\vp}}{|h|^2 \left ( \log \tfrac{e}{|h|}\right )^2} \frac{\omega ^n}{n!} \le C \int _Z \frac{|f|^2 e^{-\vp}}{|dh|^2_{\omega}} \frac{\omega ^{n-1}}{(n-1)!},
\]
where $C$ is a universal constant.
\end{d-thm}

\begin{rmk}
Note that since $r\log (e/r) \le 1$,  Demailly's Theorem gives an extension with better estimates than those of the Ohsawa-Takegoshi Theorem \ref{ot-vanilla}.  

On the other hand, one can also use the estimate
\[
s\log \frac{e}{r} \le r^{-s}
\]
to get and extension $F$ with estimates on 
\[
\int _X \frac{|F|^2e^{-\vp}}{|h|^{2-2s}} \frac{\omega ^n}{n!},
\]
but the estimates one gets from Demailly's Theorem are $O(s^{-2})$, whereas in Berndtsson's Theorem \ref{bo-gain} the estimate is better: it is $O(s^{-1})$.

More interestingly, the weights appearing on the left side of the estimate are not plurisubharmonic, and thus Demailly's Theorem is another example of a gain-type result.
\red
\end{rmk}

\subsubsection{Theory of denominators, and a general $L^2$ extension theorem with gain}\label{SS:denominators}

In \cite{mv-gain}, the authors introduced an approach to $L^2$ extension that encompassed all of the gain-type results discussed so far.  At the heart of the result is the notion of denominators, which we now present.

\begin{defn}
Functions in the class $\sd$, called {\it denominators}, are non-negative functions on $[0,\infty)$ with the following three properties.
\begin{enumerate}
\item[(i)] Each $g \in \sd$ is continuous and increasing.
\item[(ii)] For each $g \in \sd$ the improper integral 
\[
C(g) := \int _1 ^{\infty} \frac{dt}{g(t)} 
\]
is finite.
\end{enumerate}
For each $\delta > 0$, set 
\[
G_{\delta} (x) := \frac{1}{1+\delta} \left ( 1 + \frac{\delta}{C(g)} \int _1 ^x \frac{dt}{g(t)} \right ),
\]
and note that $0 < G_{\delta}(x) \le 1$.  Let 
\[
h_{\delta}(x) := \int _1 ^x \frac{1-G_{\delta}(y)}{G_{\delta}(y)} dy.
\]
\begin{enumerate}
\item[(iii)] For each $g \in \sd$ there exists a constant $\delta > 0$ such that 
\[
K_{\delta}(g) := \sup _{x \ge 1} \frac{x+h_{\delta}(x)}{g(x)}
\]
is finite.
\red
\end{enumerate}
\end{defn}

With the notion of denominators in hand, we can now state the main result of \cite{mv-gain}.

\begin{d-thm}\label{mv-ot}
Let $(X,\omega)$ be a Stein K\"ahler manifold, and $w \in \co (X)$ a holomorphic function such that
\[
\sup _X |w| \le 1 \quad \text{and, with } Z:= w^{-1}(0), \quad dw|_Z \text{ is nowhere zero}.
\]
Let $H \to X$ be a holomorphic line bundle with singular Hermitian metric $e^{-\kappa}$ such that 
\[
\ii \di \dbar \kappa + {\rm Ricci}(\omega) \ge 0.
\]
Let $g \in \sd$.  Suppose $R : X \to \R$ is a function such that for all $\gamma > 1$, and all sufficiently small $\ve > 0$ (depending on $\gamma -1$),   
\begin{enumerate}
\item[(a)] $ \alpha - g^{-1}(e^{-R}g(\alpha)$ is subharmonic, and
\item[(b)] $g^{-1}(e^{-R}g(1 - \log |w|^2)) \ge 1$,
\end{enumerate}
where $\alpha := \gamma - \log (|w|^2+\ve ^2)$.  Then for every holomorphic section $f \in H^0(Z,H|_Z)$ such that 
\[
\int _Z \frac{|f|^2e^{-\kappa}}{|dw|^2_{\omega}} \frac{\omega ^{n-1}}{(n-1)!} < +\infty
\]
there exists a holomorphic section $F \in H^0(X, H)$ such that 
\[
F|_Z = f \quad \text{and} \quad \frac{1}{2\pi} \int _X \frac{|F|^2e^{-\kappa}}{|w|^2g\left (\log \frac{e}{|w|^2}\right )} \frac{\omega ^n}{n!} \le 4 \left ( K_{\delta}(g) + \frac{1+\delta}{\delta}C(g)\right ) \int _Z \frac{|f|^2e^{-\kappa}}{|dw|^2_{\omega}} \frac{\omega ^{n-1}}{(n-1)!} .
\]
\end{d-thm}

The class of denominators is rather rich, though there has not been a careful study of just how rich.  Here are a few interesting examples.
\begin{enumerate}
\item[(I)] $g_s(x) = s^{-1} e^{s(x-1)}$, $s \in (0,1]$
\item[(II)] $g_s(x) = s^{-1} x^{1+s}$, $s\in (0,1]$
\item[(III)] $g_{N,s}(x) = s^{-1} xL_1(x)L_2(x)\cdots L_{N-2}(x)(L_{N-1}(x))^{1+s}$, $s\in (0,1]$, $N\in [2,\infty)\cap \Z$,

where $E_j = \exp ^{(j)}(1)$ and $L_j (x) = \log ^{(j)}(E_jx)$.
\end{enumerate}
With the function $g_1$ of type (I), Theorem \ref{mv-ot} recovers Theorem \ref{ot-vanilla} of Ohsawa-Takegoshi (set $R=0$), as well as a generalization of Theorem \ref{neg-wts} of Ohsawa, in which the function $\psi$ (which is $R$ in Theorem \ref{mv-ot}) is less restricted.  With the functions $g_s$ of type (II), we recover Berndtsson's Theorem \ref{bo-gain}.  Finally, with the function $g_{1,2}$ of type (III) we recover Demailly's Theorem \ref{dem-ot-thm}.

\subsection{Extension from hypersurfaces with non-trivial normal bundle}

Many of the $L^2$ extension theorems presented in the previous paragraph involved extension from a hypersurface $Z$ cut out of $X$ by a bounded holomorphic function.  One exception is Manivel's Theorem \ref{ot-manivel}, in which the submanifold $Z$ is cut out by a section of a holomorphic vector bundle that is generically transverse to the zero section.  In fact, the $L^2$ extension technique works for much more general complex subvarieties of $X$, though if the subvariety is not cut out by a section of a vector bundle that is generically transverse to the zero section, the constants become much less controlled.

Demailly's Theorem \ref{dem-ot-thm} that we quoted above is actually a special case of his result, where he considers the same setup as Manivel.  

In \cite{dv-tak}, one of us established the following result on $L^2$ extension from smooth hypersurface cut out by a non-trivial line bundle.  

\begin{d-thm}\label{ot-tak}
Let $(X,\omega)$ be a Stein K\"ahler manifold, and let $Z \subset X$ be a smooth hypersurface.  Assume there exists a section $T \in H^0(X,L_Z)$ and a metric $e^{-\lambda}$ for the line bundle $L_Z \to X$ associated to the smooth divisor $Z$, such that $e^{-\lambda}|_Z$ is still a singular Hermitian metric, and 
\begin{equation}\label{section-bound}
\sup _X |T|^2e^{-\lambda} \le 1.
\end{equation}
Let $H \to X$ be a holomorphic line bundle with singular Hermitian metric $e^{-\psi}$ such that $e^{-\psi}|_Z$ is still a singular Hermitian metric.  Assume that 
\[ 
\ii (\di \dbar \psi  +{\rm Ricci}(\omega)) \ge \ii \di \dbar \lambda _Z
\]
and
\[
\ii (\di \dbar \psi +{\rm Ricci}(\omega)) \ge(1+ \delta) \ii \di \dbar \lambda _Z
\]
for some positive constant $\delta \le 1$.  Then for any section $f \in H^0(Z,H)$ satisfying 
\[
\int _Z \frac{|f|^2e^{-\psi}}{|dT|_{\omega}^2e^{-\lambda }}dA_{\omega} <+\infty 
\]
there exists a section $F\in H^0(X,H)$ such that 
\[
F|_Z=f \quad \text{and} \quad \int _X |F|^2e^{-\psi} dV_{\omega} \le \frac{24\pi}{\delta}\int _Z \frac{|f|^2e^{-\psi}}{|dT|_{\omega}^2e^{-\lambda }}dA_{\omega}.
\]
\end{d-thm}

Theorem \ref{ot-tak} seems to have been the first result in which the metric $e^{-\lambda}$ is allowed to be singular, though we point out that the hypothesis \eqref{section-bound} puts rather a strong constraint on just how singular the metric could be.

\begin{rmk}
Let $Z \subset X$ be a smooth complex hypersurface, defined on a given coordinate chart $U_j$ as the zero set of a holomorphic function $f_j\in \co (U_j)$.  If $U_j$ and $U_k$ are two such coordinate charts, then the function $g_{jk} := f_j/f_k$ is holomorphic and nowhere zero on $U_j \cap U_k$.  Thus $\{ g_{jk}\}$ define transition functions for a line bundle on $X$, which in Theorem \ref{ot-tak} is denoted $L_Z$.  Moreover, notice that 
\begin{equation}\label{eff-id}
f_j = g_{jk} f_k,
\end{equation}
which means that the $f_j$ fit together to form a global holomorphic section of $L_Z \to X$.  Any other section whose zero set, counting multiplicity, is $Z$, differs from this section by a nowhere-vanishing holomorphic function whose domain of definition is $X$.  (This construction applies to all complex manifolds containing a smooth divisor, and does not use the Stein structure of $X$ is any way.)

If we differentiate the identity \eqref{eff-id}, we get 
\[
df_j = g_{jk}df _k + dg_{jk} f_k.
\]
Restricting to $Z$, we find that $df_j = g_{jk}df _k$, which means that$\{df _j\}$ define a section of the vector bundle $(T^*_X \tensor L_Z)|_Z \to Z$.  Moreover, this section annihilates $T_Z$.  It follows that $\{df_j\}$ is in fact a section of the line bundle $N^*_{X/Z} \tensor (L_Z)|_Z \to Z$.  Finally, since $Z$ is smooth, the section $\{df_j\}$ is nowhere zero.  Therefore the line bundle 
\[
N^*_{X/Z}\tensor (L_Z)|_Z \to Z
\]
is trivial, which is to say, the restriction to $Z$ of the line bundle $L_Z$ is isomorphic to the normal bundle of $Z$ in $X$.  This latter fact is called the {\it Adjunction Formula}.

The Adjunction Formula lends some geometric insight to the curvature hypotheses in Theorem \ref{ot-tak}.  If the normal bundle of $Z$ is very positive, then $H \to Z$ might have a lot of sections, and in order for those sections to extend, we are going to need enough curvature from $H \to X$.
\red
\end{rmk}

It is not difficult to adapt the theory of denominators to the setting of extension from hypersurfaces with non-trivial normal bundle; it is simply a matter of modifying the technique of proof of Theorem \ref{mv-ot} to the setting of Theorem \ref{ot-tak}.

\subsection{An elementary example of extension for divisors with non-trivial normal bundle}

The considerations of the last section might seem rather abstract to the more analytically minded of our readers, but in fact that condition appears rather naturally in very concrete problems.  One such problem, first considered by Seip and Wallsten \cite{sw} and then studied further by Berndtsson and Ortega Cerd\`a, is the problem of interpolation sequences for the generalized Bargmann-Fock space, which we now describe.

The underlying manifold we work on is the complex plane $\C$.  On this space, we have the trivial bundle with nontrivial metric $e^{-\vp}$, and we use it, together with Lebesgue measure, to define the Hilbert space 
\[
\sh ^2 (\C, e^{-\vp} dA) := \left \{ f \in \co (\C)\ ;\ \int _{\C} |f|^2 e^{-\vp} dA < +\infty \right \}.
\]
We say that $\sh ^2 (\C, e^{-\vp} dA)$ is a generalized Bargmann-Fock space if there exists a constant $M$ such that 
\[
M^{-1} \ii \di \dbar |z|^2 \le \ii \di \dbar \vp \le M \ii \di \dbar |z|^2.
\]
Now let $\Gamma \subset \C$ be a closed discrete subset.  To this subset we can attach another Hilbert space, namely the space
\[
\ell ^2 (\Gamma , e^{-\vp}) := \left \{ f : \Gamma \to \C\ ;\ \sum _{\gamma \in \Gamma} |f(\gamma)|^2 e^{-\vp(\gamma)} < +\infty\right \}.
\]
The basic problem is then as follows: find necessary and sufficient conditions on $\Gamma$ to guarantee that the restriction map 
\[
\sr _{\Gamma} : \sh ^2 (\C, e^{-\vp} dA)\to \ell ^2 (\Gamma, e^{-\vp})
\]
is surjective.  If this happens, we say that $\Gamma$ is an interpolation set (for the data $(\C, \vp, dA)$).

The central result of the subject, which in this generality is due to Berndtsson, Ortega Cerd\`a and Seip \cite{quimbo, quimseep}, can be stated as follows.

\begin{d-thm}\label{interp-C-char}
Assume $\sh ^2 (\C, e^{-\vp} dA)$ is a generalized Bargmann-Fock space.  Then a closed discrete subset $\Gamma$ is an interpolation set if and only if 
\begin{enumerate}
\item[(i)] $\Gamma$ is uniformly separated, i.e., 
\[
\inf \{ |\gamma - \mu|\ ;\ \gamma, \mu \in \Gamma,\ \gamma \neq \mu\}  > 0,
\]
and
\item[(ii)] the upper density 
\[
D^+_{\vp} (\Gamma) := \limsup _{r \to \infty} \sup _{z \in\C} \frac{\# (D_r(z)\cap \Gamma)}{\int _{D_r(z)} \ii \di \dbar \vp} 
\]
is strictly less than $1$.
\end{enumerate}
\end{d-thm}

The necessity of the conditions (i) and (ii) for $\Gamma$ to be an interpolation set, which was established in \cite{quimseep}, uses techniques that lie somewhat outside the scope of the present article.  The sufficiency, which preceded necessity by about 3 years, was established in \cite{quimbo} using $L^2$ techniques.  However, recently Pingali and the second author \cite{pv} found a rather direct argument for obtaining this result from Theorem \ref{ot-tak}.  The argument also illuminates the meaning of the curvature of the normal bundle of the hypersurface $\Gamma$ (especially in higher dimensions, which are treated there).  We shall now give the argument.

First, we wish to apply Theorem \ref{ot-tak} to the problem at hand.  To do so, we take $X=\C$, $\omega = \frac{\ii}{2} dz \wedge d\bar z$, and $Z=\Gamma$.  We fix any holomorphic function $T \in \co (\C)$ such that 
\[
\Gamma = {\rm Ord}(T),
\]
i.e., $T$ vanishes to order $1$ along the points of $\Gamma$, and has no other zeros.  We set 
\[
\lambda (z) := \frac{1}{\pi r^2} \int _{D_r(z)} \log |T(\zeta)|^2 dA(\zeta)= \frac{1}{\pi r^2} \int _{D_r(0)} \log |T(z-\zeta)|^2 dA(\zeta).
\]
Then by the sub-mean value property for subharmonic functions,
\[
\log |T|^2 \le \lambda, \quad \text{i.e.}, \quad |T|^2 e^{-\lambda} \le 1,
\]
and by the Poincar\'e-Lelong Formula, 
\[
\Delta \lambda = \frac{\#(\Gamma \cap D_r(z))}{\pi r^2}
\]
Therefore by Theorem \ref{ot-tak}, we have the following result.

\begin{prop}\label{ot-for-interp}
Let $\Gamma \subset \C$ be a closed discrete subset, and assume $\vp$ is a subharmonic weight function such that 
\[
\limsup _{r \to \infty} \sup _{z \in \C} \frac{\#(\Gamma \cap D_r(z))}{\ii \di \dbar \vp(z)} < 1 .
\]
Then for any $f : \Gamma \to \C$ satisfying 
\[
\sum _{\gamma \in \Gamma} \frac{|f(\gamma)|^2e^{-\vp(\gamma)}}{|T'(\gamma)|^2e^{-\lambda(\gamma)}} < +\infty
\]
there exists $F \in \sh ^2 (\C , e^{-\vp} dA)$ such that $F|_{\Gamma} = f$.
\end{prop}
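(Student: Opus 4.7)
The plan is to invoke Theorem \ref{ot-tak} directly in the setting prepared immediately before the statement: take $X=\C$ with Euclidean K\"ahler form $\omega=\tfrac{\ii}{2}dz\wedge d\bar z$, $Z=\Gamma$, the divisor line bundle $L_Z\to\C$ together with the global section $T$ whose simple zero set is $\Gamma$, the disc-average metric $e^{-\lambda}$ on $L_Z$, the trivial bundle for $H\to\C$, and the singular metric $e^{-\vp}$ on $H$. The boundedness condition $\sup_\C|T|^2 e^{-\lambda}\le 1$ required by Theorem \ref{ot-tak} has already been verified by the sub-mean value property applied to $\log|T|^2$, so the only serious hypothesis left to check is the curvature inequality.

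The first step will be to produce the required curvature inequality. Because the Euclidean metric is Ricci-flat, the curvature hypotheses of Theorem \ref{ot-tak} collapse to the single requirement $\ii\,\di\dbar\vp \ge (1+\delta)\,\ii\,\di\dbar\lambda$ as $(1,1)$-currents on $\C$, for some $\delta\in(0,1]$. By the Poincar\'e--Lelong formula, together with the fact that $\ii\,\di\dbar$ commutes with the disc average, $\ii\,\di\dbar\lambda(z)$ is a smooth $(1,1)$-form whose density at $z$ is proportional to $r^{-2}\#(\Gamma\cap D_r(z))$. The density hypothesis of the proposition yields, for $r$ large enough, an $\eta>0$ such that $\#(\Gamma\cap D_r(z))\le (1-\eta)\int_{D_r(z)}\ii\,\di\dbar\vp$ uniformly in $z\in\C$, which after matching normalizations is exactly the desired curvature comparison.

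Granting this, Theorem \ref{ot-tak} delivers a holomorphic function $F$ on $\C$ with $F|_\Gamma=f$ and
\[
\int_\C |F|^2 e^{-\vp}\, dA \ \le\ \frac{24\pi}{\delta}\sum_{\gamma\in\Gamma}\frac{|f(\gamma)|^2 e^{-\vp(\gamma)}}{|T'(\gamma)|^2 e^{-\lambda(\gamma)}}<\infty,
\]
where on the zero-dimensional submanifold $\Gamma$ the tangential norm $|dT|_\omega^2$ reduces to $|T'(\gamma)|^2$ and $dA_\omega$ restricts to counting measure, so the right-hand side of the estimate in Theorem \ref{ot-tak} becomes exactly the sum appearing in the hypothesis of the proposition.

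The main obstacle I expect is the gap between the integrated comparison supplied by the density hypothesis and the pointwise (current) comparison demanded by Theorem \ref{ot-tak}. The route I would take is to replace $\vp$ by its $r$-disc average $\vp_r$, which is smooth and subharmonic, satisfies $\vp_r\ge\vp$, and whose Laplacian density at $z$ is exactly $(\pi r^2)^{-1}\int_{D_r(z)}\ii\,\di\dbar\vp$; the density hypothesis then forces the curvature inequality with $\vp_r$ in place of $\vp$ pointwise on $\C$. In the Bargmann--Fock regime motivating this proposition, $\vp_r-\vp$ is a globally bounded function, so substituting $\vp_r$ for $\vp$ costs only a multiplicative constant in the final integral. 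For a general subharmonic $\vp$, one couples this regularization with a strictly pseudoconvex exhaustion $\Omega_k\nearrow\C$ and passes to a weak limit of the extensions, using the monotonicity $\vp_r\downarrow\vp$ as $r\downarrow 0$ to control the weight discrepancy.
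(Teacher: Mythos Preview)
Your approach is essentially the paper's: the proposition is obtained as an immediate consequence of Theorem \ref{ot-tak} with the choices $X=\C$, $\omega=\tfrac{\ii}{2}dz\wedge d\bar z$, $Z=\Gamma$, section $T$, and metric $e^{-\lambda}$ already set up, once one checks that the density hypothesis forces the curvature inequality $\ii\di\dbar\vp\ge(1+\delta)\ii\di\dbar\lambda$ via the Poincar\'e--Lelong computation $\Delta\lambda(z)=(\pi r^2)^{-1}\#(\Gamma\cap D_r(z))$. Your final paragraph, passing from the integrated density condition to a pointwise curvature comparison by replacing $\vp$ with its disc average $\vp_r$, is not strictly part of the proof of the proposition itself but rather anticipates the lemma the paper states immediately afterward, which is what bridges this proposition to the full sufficiency direction of Theorem \ref{interp-C-char}.
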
 

The `if' part of Theorem \ref{interp-C-char} then follows immediately from the following two lemmas.

\begin{lem}\label{unif-sep}
Let $\Gamma \subset \C$ be a closed discrete subset.  Then $\Gamma$ is uniformly separated with respect to the Euclidean distance if and only if for any $r > 1$ there exists $C_r>0$ such that 
\[
\inf _{\gamma \in \Gamma} |T'(\gamma)|^2e^{-\lambda(\gamma)} \ge C_r.
\]
\end{lem}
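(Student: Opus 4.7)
My plan is to derive an explicit product formula for $|T'(\gamma)|^2 e^{-\lambda(\gamma)}$ in terms of the distances from $\gamma$ to the other points of $\Gamma$ lying in $D_r(\gamma)$, and then read off both directions of the equivalence from that formula. Fix $\gamma \in \Gamma$; after a harmless perturbation of $r$ to avoid having any point of $\Gamma$ on $\partial D_r(\gamma)$, enumerate $\{\gamma_1, \dots, \gamma_N\} := (\Gamma \cap D_r(\gamma)) \setminus \{\gamma\}$ and factor
\[
T(z) = (z-\gamma) \prod_{j=1}^N (z-\gamma_j) \cdot T_1(z),
\]
where $T_1 \in \co(\C)$ is nowhere zero on $\overline{D_r(\gamma)}$. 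Then $T'(\gamma) = \prod_j (\gamma-\gamma_j)\cdot T_1(\gamma)$, and $\lambda(\gamma)$ splits into three kinds of contributions.

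The nontrivial ingredient is the mean value of $\log|w-a|^2$ over $D_r(0)$ for $|a|\le r$, which I would show equals $|a|^2/r^2 + 2\log r - 1$. A clean way to do this is to set $v(a) := \tfrac{1}{\pi r^2}\int_{D_r(0)}\log|w-a|^2\,dA(w)$ and note that $\Delta v \equiv 4/r^2$ on $D_r(0)$ (from $\Delta_a\log|w-a|^2 = 4\pi\delta_{a=w}$), while the harmonicity of $w\mapsto \log|w-a|^2$ on $D_r(0)$ for $|a|>r$ combined with the mean value property gives $v(a)=\log|a|^2$ for $|a|\ge r$; radial symmetry and the ODE then pin down the formula. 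The remaining two pieces are routine: the mean of $\log|z-\gamma|^2$ is the $a=0$ case, namely $2\log r - 1$, and the mean of $\log|T_1|^2$ is $\log|T_1(\gamma)|^2$ by harmonicity. Putting everything together and using $\log|T'(\gamma)|^2 = \sum_j \log|\gamma-\gamma_j|^2 + \log|T_1(\gamma)|^2$ produces the identity
\[
|T'(\gamma)|^2 e^{-\lambda(\gamma)} \;=\; \frac{e}{r^2}\,\prod_{j=1}^N \phi(u_j), \qquad u_j := \frac{|\gamma-\gamma_j|^2}{r^2}\in(0,1],\quad \phi(u) := u\,e^{1-u}.
\]

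With this formula in hand, both directions are elementary, because $\phi$ maps $[0,1]$ monotonically onto $[0,1]$ and satisfies $u\le \phi(u)\le eu$. For \emph{uniform separation $\Rightarrow$ lower bound}: if $\Gamma$ has separation $\delta>0$, then $u_j \ge \delta^2/r^2$, so $\phi(u_j) \ge \delta^2/r^2$; a standard disc-packing argument (the discs $D_{\delta/2}(\gamma_j)$ are pairwise disjoint and sit inside $D_{r+\delta/2}(\gamma)$) bounds $N\le (2r/\delta+1)^2$, and the product is therefore bounded below by a positive constant depending only on $r$ and $\delta$. For the \emph{converse}: if $|T'(\gamma)|^2e^{-\lambda(\gamma)}\ge C_r$, then since every factor $\phi(u_k)$ is $\le 1$, any single $\phi(u_j)$ must itself be $\ge C_r r^2/e$; the upper estimate $\phi(u)\le eu$ then forces $u_j \ge C_r r^2/e^2$, so $|\gamma-\gamma_j|\ge r^2\sqrt{C_r}/e$ for every $\gamma_j\in D_r(\gamma)\cap\Gamma$, while trivially $|\gamma-\gamma_j|\ge r$ otherwise. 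Hence $\Gamma$ is uniformly separated with constant $\min(r, r^2\sqrt{C_r}/e)$.

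The only genuine obstacle I anticipate is the bookkeeping involved in the mean value computation for $\log|w-a|^2$; everything else is an algebraic manipulation of the product formula and the elementary inequalities $u\le\phi(u)\le eu$ on $[0,1]$.
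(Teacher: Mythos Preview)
Your proposal is correct. The paper does not actually supply a proof of this lemma; it merely cites \cite{pv} for the general case and \cite{v-rs1} for an ``elementary proof in the $1$-dimensional case,'' so there is no paper-internal argument to compare against line by line.

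That said, your approach is exactly the kind of elementary argument the paper is alluding to. The key step---computing the average of $\log|w-a|^2$ over $D_r(0)$ as $|a|^2/r^2 + 2\log r - 1$ for $|a|\le r$---is essentially a Jensen-type formula, and once you have it the product identity
\[
|T'(\gamma)|^2 e^{-\lambda(\gamma)} = \frac{e}{r^2}\prod_{j=1}^N \phi\!\left(\frac{|\gamma-\gamma_j|^2}{r^2}\right), \qquad \phi(u)=u e^{1-u},
\]
does all the work. Your reading of both directions from the monotonicity of $\phi$ on $[0,1]$, the bounds $u\le\phi(u)\le eu$, and the disc-packing bound on $N$ is clean and complete. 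Two minor remarks: the perturbation of $r$ to avoid points of $\Gamma$ on $\partial D_r(\gamma)$ is not strictly necessary (boundary points contribute $\phi(1)=1$ and both formulas for the mean agree there), and in the converse direction you actually prove something slightly stronger than stated---the lower bound for a \emph{single} value of $r>1$ already forces uniform separation.
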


\noi The proof of Lemma \ref{unif-sep} was established in \cite{pv}, but it is rather more elementary in dimension $1$.  The reader can find an elementary proof in the $1$-dimensional case in \cite{v-rs1}.

\begin{lem}
Let $\vp$ be a weight function satisfying 
\[
- M\ii \di \dbar |z|^2  \le \Delta \vp \le M \ii \di \dbar |z|^2 ,
\]
and let 
\[
\vp _r (z) :=  \frac{1}{\pi r^2}\int _{D^o_r(z)} \vp (\zeta) \log \frac{r^2}{|\zeta -z|^2} dA (\zeta)= \frac{1}{\pi r^2} \int _{D^o_r(0)}  \vp (\zeta+z) \log \frac{r^2}{|\zeta|^2}dA(\zeta) , \qquad z \in\C.
\]
Then 
\[
-  M \ii \di \dbar |z|^2  \le \Delta \vp_r \le M  \ii \di \dbar |z|^2,
\]
and there is a constant $C_r>0$ such that for all $z \in \C$, 
\[
|\vp (z) - \vp _r(z)| \le C_r.
\]
In particular, we have the following quasi-isometries 
\[
\sh ^2 (\C , e^{-\vp}dA) \asymp \sh ^2 (\C , e^{-\vp_r} dA) \quad \text{and} \quad \ell ^2(\Gamma, e^{-\vp}) \asymp \ell ^2 (\Gamma, e^{-\vp_r}).
\]
of Hilbert spaces given by the identity map.
\end{lem}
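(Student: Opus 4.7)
The plan is to reduce everything to the single observation that the convolution kernel $K_r(\zeta)=\frac{1}{\pi r^2}\log(r^2/|\zeta|^2)\chi_{D_r^o(0)}(\zeta)$ is a nonnegative radial function with total mass one on $\C$; the explicit computation $\int_0^r \rho\log(r^2/\rho^2)\,d\rho = r^2/4$, doubled by the $2\pi$ angular integration, gives $\int K_r\,dA=1$. With this normalization, we may write
\[
\vp_r(z)-\vp(z)=\int_{D_r(0)} K_r(\zeta)\bigl[\vp(z+\zeta)-\vp(z)\bigr]\,dA(\zeta).
\]

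First I would handle the Laplacian bound on $\vp_r$. Since $\vp_r=\vp*K_r$ with $K_r\ge 0$ and $K_r$ compactly supported, differentiation under the integral sign gives $\ii\di\dbar\vp_r(z)=\int K_r(\zeta)\,(\ii\di\dbar\vp)(z+\zeta)\,dA(\zeta)$ as $(1,1)$-forms. The pointwise bound $-M\,\ii\di\dbar|z|^2\le\ii\di\dbar\vp\le M\,\ii\di\dbar|z|^2$ is preserved by averaging against a probability measure, yielding the same two-sided bound for $\vp_r$.

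Next, I would establish the uniform closeness $|\vp(z)-\vp_r(z)|\le C_r$. Using the radial symmetry of $K_r$, I pass to spherical averages $A_\vp(z,\rho):=\frac{1}{2\pi}\int_0^{2\pi}\vp(z+\rho e^{\ii\theta})\,d\theta$, so that
\[
\vp_r(z)-\vp(z)=\frac{2}{r^2}\int_0^r \rho\log(r^2/\rho^2)\bigl[A_\vp(z,\rho)-\vp(z)\bigr]\,d\rho.
\]
The classical identity $\frac{d}{d\rho}A_\vp(z,\rho)=\frac{1}{2\pi\rho}\int_{D_\rho(0)}\Delta_\zeta\vp(z+\zeta)\,dA(\zeta)$, combined with the scalar bound $|\Delta\vp|\lesssim M$ coming from the two-sided Hessian estimate, yields $|A_\vp(z,\rho)-\vp(z)|\le \tfrac{M}{2}\rho^2$. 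Plugging this in and evaluating $\frac{2}{r^2}\int_0^r \rho^3\log(r^2/\rho^2)\,d\rho = r^2/4$ produces the explicit bound $|\vp_r(z)-\vp(z)|\le C_r:=\tfrac{M r^2}{8}$, independent of $z$. The main (and only real) obstacle is ensuring that $\vp$ is regular enough for this pointwise averaging computation; since $\vp$ has bounded complex Hessian in the sense of currents, one may first replace $\vp$ by a standard smoothing, carry out the computation, and pass to the limit, or equivalently interpret $A_\vp$ and $\Delta\vp$ distributionally throughout.

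Finally, the quasi-isometry of Hilbert spaces is an immediate consequence of the pointwise inequality. From $|\vp-\vp_r|\le C_r$ we get $e^{-C_r}e^{-\vp_r(z)}\le e^{-\vp(z)}\le e^{C_r}e^{-\vp_r(z)}$ everywhere on $\C$, which shows that the identity maps $\sh^2(\C,e^{-\vp}dA)\to\sh^2(\C,e^{-\vp_r}dA)$ and $\ell^2(\Gamma,e^{-\vp})\to\ell^2(\Gamma,e^{-\vp_r})$ are bijective and bi-Lipschitz with constants $e^{\pm C_r/2}$, completing the proof.
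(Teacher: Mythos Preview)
Your argument is correct and is the standard one. The paper itself does not give a proof of this lemma; it simply refers the reader to \cite{v-rs1}. Your approach---recognizing $\vp_r$ as convolution of $\vp$ against the nonnegative probability kernel $K_r$, then controlling $|\vp_r-\vp|$ via the classical spherical-mean identity together with the bound on $\Delta\vp$---is exactly the intended route.

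One small arithmetic slip: the radial integral is $\int_0^r \rho\log(r^2/\rho^2)\,d\rho = r^2/2$, not $r^2/4$ (substitute $t=\rho/r$ and use $\int_0^1 t\log t\,dt=-1/4$). With the correct value, the angular factor $2\pi$ and the prefactor $1/(\pi r^2)$ combine to give total mass $1$, as you want. This slip is harmless for the rest of the argument, since all you need is that $K_r$ is a probability measure; the explicit constant $C_r$ in the second part is not claimed in the statement anyway, and your later computation $\frac{2}{r^2}\int_0^r \rho^3\log(r^2/\rho^2)\,d\rho = r^2/4$ is correct.
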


\noi Again, for a proof see \cite{v-rs1}.

\subsection{Higher forms}

The $L^2$ extension theorems discussed so far have all treated the problem of extending holomorphic sections.  It is natural to ask whether extension is possible for $\dbar$-closed forms of higher bi-degree.  

First, let us mention briefly that, a priori, there are two possible definitions for the restriction of a $\dbar$-closed form with values in a holomorphic line bundle. To explain these, let $X$ be a complex manifold, $\iota : Z \emb X$ a complex submanifold (or subvariety), and $L \to X$ a holomorphic line bundle.  

The first type of restriction of a $(p,q)$-form $\alpha$ with values in $L$ is the pullback $\iota ^*\alpha$.  The resulting object is a differental form on $Z$ (or the regular part of $Z$ if $Z$ is not smooth).  We call this restriction the {\it intrinsic restriction}.

The second type of restriction is a section of the restricted vector bundle 
\[
(\Lambda ^{p,q}_X \tensor L)|_Z \to Z.
\]
We call this second type of restriction the {\it ambient restriction}.  While ambient restriction is a little less natural than intrinsic restriction, since the restriction to $Z$ of a $(p,q)$-form on $X$ is no longer a $(p,q)$-form on $Z$, it is nevertheless a useful notion of restriction in certain contexts.

In his paper \cite{manivel}, Manivel claimed that the methods used to prove $L^2$ extension of holomorphic sections carry over to $\dbar$-closed forms of higher bi-degree.  It was later pointed out by Demailly \cite{dem-ot} that Manivel's deduction was not correct, because the proofs of all the $L^2$ extension techniques above use the interior ellipticity of the $\dbar$-operator, and this ellipticity fails for $(p,q)$-forms as soon as $q \ge 1$.  (In the extreme case, if $\alpha$ has bi-degree $(p,n)$ on an $n$-dimensional complex manifold, then $\dbar \alpha = 0$, so there is no regularity whatsoever.) There is, however, a related elliptic problem of solving $\dbar$ with minimal norm, and Demailly asked whether this problem has some small amount of regularity when the metrics of the line bundles in question are singular.  Demailly's questions remain unsolved at the time of writing of this article.  

A breakthrough came in the work of V. Koziarz \cite{koz}, who was able to show that in fact, one can extend cohomology classes from smooth hypersurfaces.  This amounts to saying that if one is given a $\dbar$-closed twisted form $u$ on the smooth hypersurface, then there is an ambiently defined $\dbar$-closed form $U$ whose restriction to the hypersurface differs from $u$ by a form that is $\dbar$-exact on the hypersurface.  Koziarz actually made use of the Ohsawa-Takegoshi extension theorem for the case $q=0$ by passing to the sheaf-theoretic realization of cohomology; in other words, instead of using forms to represent cohomology, one uses \v Cech cocycles, and these are locally given by holomorphic functions.  One can have these functions be defined on Stein domains, where the Ohsawa-Takegoshi Theorem (or its proof) can be applied.  The functions can be extended, and one proceeds to show that the extensions define a cocycle.  The trouble with the proof is that necessarily in the sheaf-theoretic formulation one must choose a cover, and the constants of $L^2$ extension, which should be universal, end up depending on the cover.

The next step was taken by Berndtsson.  Using his method of $\dbar$ on currents, Berndtsson was able to solve the $L^2$ extension problem for $\dbar$-closed $(n,q)$-forms with values in a holomorphic line bundle, from smooth hypersurfaces in {\it compact} K\"ahler manifolds.  Berndtsson establishes his theorem by using the method of solving $\dbar$ for a current, as developed in \cite{bs}. 

Most recently, the authors have solved the $L^2$ extension problem for $\dbar$-closed forms on a Stein manifold.  Both types of restrictions were considered, though the two extension problems turn out to be essentially equivalent.  Moreover, the techniques are easily adapted to the compact setting (where they are in fact a little easier to establish).

Let us state the main results of \cite{mv-ot}.  To set notation, let $X$ be a K\"ahler manifold of complex dimension $n$ with smooth K\"ahler metric $\omega$, and $Z \subset X$ a smooth complex hypersurface.  Let $L \to X$ be a holomorphic line bundle with a possibly singular Hermitian metric $e^{-\vp}$ whose singular locus does not lie in $Z$, i.e., such that $e^{-\vp}|_{Z}$ is a metric for $L|_Z$.  Assume also that the line bundle $E_Z \to X$ associated to the divisor $Z$ has a holomorphic section $f_Z$ such that $Z = \{ x \in X\ ;\ f_Z(x) = 0\}$, and a singular Hermitian metric $e^{-\lambda _Z}$, such that 
\[
\sup _X |f_Z|^2e^{-\lambda _Z} = 1.
\]

The first result of \cite{mv-ot} is as follows.
\begin{d-thm}[Ambient $L^2$ extension]\label{ambient-main}
Let the notation be as above, and denote by $\iota :Z \emb X$ the natural inclusion.  Assume that 
\[
\ii \left ((\di \dbar (\vp - \lambda_Z) + {\rm Ricci}(\omega))\right ) \wedge \omega ^{q}\ge 0
\]
and
\[
\ii (\di \dbar (\vp - (1+\delta)\lambda_Z) + {\rm Ricci}(\omega)) \wedge \omega ^{q} \ge 0
\]
for some constant $\delta > 0$.  Then there is a constant $C>0$ such that for any smooth section $\xi$ of the vector bundle $(L\tensor \Lambda ^{0,q}_X)|_Z \to Z$ satisfying
\[
\dbar (\iota ^* \xi )= 0 \quad \text{and} \quad \int _Z\frac{ |\xi|_{\omega}^2 e^{-\vp}}{|df_Z|^2e^{-\lambda _Z}} \omega^{n-1} <+\infty ,
\]
there exists a smooth $\dbar$-closed $L$-valued $(0,q)$-form $u$ on $X$ such that
\[
u|_Z = \xi \quad \text{and} \quad \int _X|u|_{\omega}^2 e^{-\vp} \frac{\omega^n}{n!} \le \frac{C}{\delta}\int _Z\frac{ |\xi|_{\omega}^2 e^{-\vp}}{|df_Z|^2e^{-\lambda _Z}} \frac{\omega^{n-1}}{(n-1)!} .
\]
The constant $C$ is universal, i.e., it is independent of all the data.
\end{d-thm}

Given a smooth section $\xi$ of $L\tensor \Lambda ^{0,q}_X)|_{Z}$, the pullback $\iota ^* \xi$ is a well-defined $L$-valued $(0,q)$-form on $Z$.  Now, if $\eta$ is an $L$-valued $(0,q)$-form on $Z$ then the orthogonal projection $P: T^{0,1}_X|_Z \to T^{0,1}_Z$ induced by the K\"ahler metric $\omega$ maps $\eta$ to a section $P^*\eta$ of $(L \tensor \Lambda ^{0,q}_X)|_Z \to Z$, by the formula
\[
\left < P^*\eta , \bar v_1 \wedge ... \wedge \bar v_q\right > := \left < \eta ,  (P\bar v_1) \wedge ... \wedge (P\bar v_q) \right> \quad \text{in }L_z
\]
for all $v_1,...,v_q \in T^{*0,1}_{X,z}$.  The map $P^*$ is an isometry for the pointwise norm on $(0,q)$-forms induced by $\omega$, and since $\iota ^*P^*\eta = \eta$, the hypotheses of Theorem \ref{ambient-main} apply to $\xi = P^*\eta$, and we obtain the following theorem.

\begin{d-thm}[Intrinsic $L^2$ extension] \label{intrinsic-main}
Suppose the hypotheses of Theorem \ref{ambient-main} are satisfied.  Then there is a universal constant $C>0$ such that for any smooth $\dbar$-closed $L$-valued $(0,q)$-form $\eta$ on $Z$ satisfying 
\[
\int _Z\frac{ |\eta|_{\omega}^2 e^{-\vp}}{|df_Z|^2e^{-\lambda _Z}} \omega^{n-1} <+\infty ,
\]
there exists a smooth $\dbar$-closed $L$-valued $(0,q)$-form $u$ on $X$ such that, with $\iota :Z \emb X$ denoting the natural inclusion,
\[
\iota ^* u = \eta \quad \text{and} \quad \int _X|u|_{\omega}^2 e^{-\vp} \frac{\omega^n}{n!} \le \frac{C}{\delta}\int _Z\frac{ |\eta|_{\omega}^2 e^{-\vp}}{|df_Z|^2e^{-\lambda _Z}} \frac{\omega^{n-1}}{(n-1)!} .
\]
\end{d-thm}

\subsection{Optimal constants}\label{S:optimal_constants}

There has been some interest in obtaining the best constant in the $L^2$ extension theorem.  The main motivation (and at present, essentially the only motivation known to the authors) was linked to the Suita conjecture \cite{suita}, as we now explain.

Let $X$ be a Riemann surface and assume $X$ admits a non-constant bounded subharmonic function.  (Such Riemann surfaces are called {\it hyperbolic}, or sometimes {\it potential theoretically-hyperbolic}.)  It is well known that such a Riemann surface admits a Green's function $G: X\times X \to [-\infty, 0)$, i.e., a function uniquely characterized by the following properties:
\begin{enumerate}
\item[(i)] If we write $G_x(y) = G(y,x)$, then for each $x\in X$, 
\[
\frac{\ii}{\pi} \di \dbar G_x = \delta _x,
\]
and
\item[(ii)] if $H: X\times X \to [-\infty, 0)$ is another function with property (i), then $G \ge H$.
\end{enumerate}
Using the Green's Function, one can construct a conformal metric for $X$ as follows:
\[
\omega _F (x) := \lim _{y \to x} \frac{\ii}{2} \di (e^{G_x}) (y)\wedge \dbar (e^{G_x})(y).
\]
The metric $\omega _F$ is called the {\it fundamental metric}. 
\begin{rmk}
The metric $\omega _F$ can be computed from the Green's Function at a point $x$ as follows.  Choose any holomorphic function $f \in \co (X)$ such that ${\rm Ord}(f) = \{x\}$, i.e., $f$ has exactly one zero of multiplicity $1$, and this zero is at the point $x$.  Since $X$ is an open Riemann surface, and thus Stein, such a function $f$ exists.  From the definition of Green's function, the function 
\[
h_x := G_x - \log |f|
\]
is harmonic.  Then 
\[
\omega _F(x) = e^{2h_x(x)} \frac{\ii}{2} df (x) \wedge d\bar f (x).
\]
\red
\end{rmk}

\begin{ex}
Let $X$ be the unit disk.  Then $G(z,\zeta) = \log \left | \frac{\zeta - z}{1-\bar \zeta z}\right |$, and we have 
\[
\omega _F (z) = \frac{\ii dz \wedge d\bar z}{2(1-|z|^2)^2}.
\]
Thus in the unit disk the fundamental metric agrees with the Poincar\'e metric.  A similar calculation shows that on a bordered Riemann surface with no punctures, the fundamental metric and the Poincar\'e metric are asymptotic at the boundary.  
\red
\end{ex}

Suita's conjecture can be stated as follows:

\begin{conj}\cite{suita}
Let $X$ be a hyperbolic Riemann surface.  Then the Gaussian curvature of the fundamental metric of $X$ is at most $-4$. Moreover, it is exactly $-1$ if and only if $X$ is the unit disk.
\end{conj}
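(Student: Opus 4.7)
The plan is to reduce Suita's conjecture to an optimal-constant Ohsawa-Takegoshi extension theorem and then invoke it. First I would translate the curvature statement into a Bergman kernel comparison. Fix $x\in X$ and a local holomorphic coordinate $z$ with $z(x)=0$, as well as a global $f\in \co(X)$ (which exists since $X$ is Stein) with a single simple zero at $x$. Writing $h_x=G_x-\log|f|$ (harmonic near $x$) and $c_\beta(x):=e^{h_x(x)}$ (the so-called Suita capacity), the fundamental metric at $x$ is $\omega_F(x)=|f'(x)|^2 c_\beta(x)^2\,\tfrac{\ii}{2}dz\wedge d\bar z$. A direct computation, using that $\log c_\beta$ is harmonic in $x$, shows that $\operatorname{Curv}(\omega_F)(x)\le -4$ is equivalent to the pointwise Suita-type inequality
\[
c_\beta(x)^2 \le \pi\, K_X(x,\bar x),
\]
where $K_X(x,\bar x)$ denotes the Bergman kernel on the diagonal relative to Lebesgue area in the chosen chart. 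Thus the claimed curvature bound becomes a matter of bounding $K_X$ from below by the Suita capacity.

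The second step is to produce, for every $\varepsilon>0$, a function $F\in \co(X)\cap L^2(X)$ with $F(x)=1$ and
\[
\int_X |F|^2\,dA \;\le\; \frac{\pi}{c_\beta(x)^2}+\varepsilon,
\]
since the extremal characterization of $K_X(x,\bar x)$ as a supremum of $|g(x)|^2/\|g\|^2$ immediately yields the displayed inequality. To construct $F$, I would apply an optimal-constant version of Theorem \ref{ot-tak} (or Theorem \ref{mv-ot} specialized appropriately) with the weight $\psi=2G_x$, which is subharmonic on $X$ with logarithmic pole at $x$ and vanishes at the ``boundary'' of $X$. The role of the defining function $T$ of the divisor $\{x\}$ is played (after passage to sublevel sets $\Omega_t=\{G_x<t\}$) by $f$ with metric $e^{-\lambda}$ so that $\lambda\sim 2h_x$ and hence $|f|^2e^{-\lambda}\sim e^{2G_x}$; the curvature hypothesis of Theorem \ref{ot-tak} is automatic because $\ii\di\dbar\psi\ge 0$ and $X$ is one-dimensional, so the $\operatorname{Ricci}$ and normal-bundle comparisons collapse. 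Extending the value $1$ from the zero-dimensional ``hypersurface'' $\{x\}$ and letting $t\to -\infty$ (with uniform estimates independent of $t$) yields the desired $F$ on $X$, completing the inequality $c_\beta(x)^2\le\pi K_X(x,\bar x)$.

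Finally, for the rigidity statement, if equality holds at some point $x_0$ then the extremal extension $F$ is unique and saturates the $L^2$ bound; standard arguments (inspecting the equality case in the Bochner-Kodaira-Morrey-Kohn identity underlying the sharp extension theorem) force the weight $e^{-2G_{x_0}}$ to match the $L^2$ density of $|F|^2$, from which $F$ provides a biholomorphism of $X$ onto the unit disk. The main obstacle is obtaining the \emph{sharp} constant $\pi$ in the extension step: the constants in the extension theorems displayed in this article (e.g.\ the $1620\pi$ of Theorem \ref{ot-stein} or the $24\pi/\delta$ of Theorem \ref{ot-tak}) are all off by an uncontrolled factor, and achieving the optimal $\pi$ requires a delicate refinement of the twisted Bochner-Kodaira method, with careful choice of the twisting pair $(\tau, A)$ so that \emph{no} slack is lost in the Cauchy-Schwarz step of the derivation of \eqref{tbest}. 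Everything else in the argument is robust; the sharp extension constant is the entire game.
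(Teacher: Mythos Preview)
Your approach is essentially the same as the paper's: reduce the curvature bound to the inequality $\omega_F(x)\le\frac{\pi}{4}B_X(x,\bar x)$ (the paper cites Schiffer's theorem for the curvature formula; you compute directly), then invoke the optimal-constant extension theorem of B\l ocki and Guan--Zhou, stated here as Theorem~\ref{ot-canon}, to produce a competitor in the extremal characterization (B4). You correctly identify that the sharp constant $\pi$ is the entire content and is not delivered by any of the extension theorems actually proved in this article.

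A few points of friction in your implementation, though. First, the paper works with holomorphic $1$-forms and the intrinsic Bergman kernel, avoiding any choice of area form; your function-based formulation with ``Lebesgue area in the chosen chart'' is not globally defined on a general Riemann surface. Second, and more seriously, your choice $\psi=2G_x$ does not plug into Theorem~\ref{ot-tak} as written: $e^{-2G_x}$ blows up like $|z|^{-2}$ at $x$, so the datum $f\equiv 1$ on $\{x\}$ has infinite weighted norm and the hypothesis fails. The paper instead takes $\lambda=2h_x$ with $h_x=\log|f|-G_x$ harmonic, so that $|f|^2e^{-\lambda}=e^{2G_x}\le 1$, and then uses the essentially trivial line-bundle weight $e^{-\psi}=e^{\lambda}$ (harmonic, hence $\ii\di\dbar\psi=0$ and $\delta=1$). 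Third, your sublevel-set limit with $\Omega_t=\{G_x<t\}$ and $t\to-\infty$ shrinks to the point $x$ rather than exhausting $X$, so whatever you intended there needs rethinking; in the paper's argument no such limit appears. Finally, neither your sketch nor the paper's proof addresses the equality case beyond a gesture.
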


Suita's Conjecture was proved fairly recently by B\l ocki \cite{blocki-suita} for the case where $X$ is a domain in $\C$, and more generally by Guan and Zhou \cite{gz-suita}.  We now sketch the proof.

A theorem of M. Schiffer \cite{sch} states that the curvature form of $\omega _F$ is 
\[
{\rm R}(\omega _F)(z) = - \pi B_X(z,\bar z),
\]
where $B_X(z,\bar w)$ is the Bergman kernel of the Riemann surface $X$, i.e., 
\[
B_X (z,\bar z) := \ii \alpha _j (z) \wedge \overline{\alpha _j(z)},
\]
where $\{\alpha _1,\alpha _2,...\}$ is an orthonormal basis of holomorphic $1$-forms for the Hilbert space 
\[
\sa ^2_X := \left \{ \alpha \in H^0(X, T^{*1,0}_X)\ ;\ \int _X \frac{\ii}{2} \alpha \wedge \overline{\alpha} < +\infty \right \}
\]
of square-integrable holomorphic $1$-forms on $X$.
\begin{rmk}
The following properties of $B_X$ are well-known.

\begin{enumerate}

\item[(B1)] The series defining $B_X$ converges locally uniformly.

\item[(B2)] $B_X$ is independent of the choice of orthonormal basis for $\sh ^2 _X$.

\item[(B3)] $B_X$ is characterized by its holomorphicity (in the complex structure of $X\times \overline{X}$) together with the following reproducing property:  for any $\alpha \in \sh ^2 _X$, 
\[
\alpha (z) = \int _X \frac{\ii}{2} \alpha(w) \wedge \frac{B_X(z,w)}{\ii}.
\]

\item[(B4)] For any smooth area form $dA$ on $X$, 
\[
\frac{B_X (z,z)}{dA} = \sup _{||\alpha ||=1} \frac{\ii \alpha (z)\wedge \overline{\alpha (z)}}{dA}.
\]
\end{enumerate}
\end{rmk}

It was first realized by Ohsawa \cite{o-01} that one need only prove show that 
\[
\omega _F(z) \le \frac{\pi}{4} B_X(z,\bar z),
\]
and that an the $L^2$ extension theorem with optimal constant could be used to produce holomorphic $1$-forms that would give the needed estimate for the curvature of $\omega _F$.

Such an $L^2$ extension theorem was first proved by B\l ocki for domains in $\C$, and more generally by Guan and Zhou.  Recently Ohsawa \cite{o-2014} has given a significantly more elementary proof of the optimal constant extension theorem.  We now state Guan-Zhou's version of this theorem.  

\begin{d-thm}\cite{blocki-suita,gz-suita} \label{ot-canon}
Let $X$ be a Stein manifold of complex dimension $n$ and $Y \subset X$ a smooth hypersurface.  Let $f \in H^0(X,L_Y)$ be the canonical section of the line bundle associated to the smooth divisor $Y$.  Assume there exists a metric $e^{-\lambda}$ for $L_Y$ such that 
\[
\sup _X |f|^2e^{-\lambda} \le 1.
\]
Let $L \to X$ be a holomorphic line bundle with singular Hermitian metric $e^{-\psi}$ such that for some $\delta \le 1$, 
\[
\ii \di \dbar \psi \ge 0 \quad \text{and} \quad \ii \di \dbar \psi \ge \delta \ii \di \dbar \lambda.
\]
Then for any $L$-valued holomorphic $(n-1)$-form $\alpha_o \in H^0(Y, K_Y)$ such that 
\[
\int _Y \frac{\ii ^{(n-1)^2}}{2^{n-1}} \alpha_o \wedge \bar \alpha_o e^{-\psi} < +\infty
\]
there exists a holomorphic $(L+L_Y)$-valued $n$-form $\alpha \in H^0(X, K_X+L+L_Y)$ such that 
\[
\alpha |_Y = \alpha _o \wedge df \quad \text{and} \quad \int _X \frac{\ii ^{n^2}}{2^n}\alpha \wedge \bar \alpha e^{-\psi-\lambda} \le \frac{\pi}{\delta} \int _Y\frac{\ii ^{(n-1)^2}}{2^n} \alpha_o \wedge \bar \alpha_o e^{-\psi}.
\]
\end{d-thm}

Next we take $X$ to be our hyperbolic Riemann surface and $Y$ to be  any point $x \in X$.  Let $f \in \co (X)$ satisfy ${\rm Ord}(f) = x$.  The function
\[
h_x := \log |f| - G_x
\]
is therefore harmonic.  Moreover 
\[
|f|^2e^{-2h_x} = e^{2G_x} \le 1.
\]
We make the choice $\lambda = 2h_x$.  Then $\Delta \lambda = 0$, so we can take the metric $e^{-\psi} = e^{\lambda}$ and set $\delta = 1$.  Theorem \ref{ot-canon} then tells us there exists a holomorphic $1$ form $\alpha$ on $X$ such that 
\[
\alpha (x) = df(x) \quad \text{and} \quad c_{\alpha} := \int _X \frac{\ii}{2} \alpha \wedge \bar \alpha \le \frac{\pi}{2} e^{-2h_x(x)}.
\]
(Here we have extended the $0$-form $\alpha _o = 1$.)

\begin{proof}[Proof of Suita's Conjecture]
Consider the holomorphic $1$-form $\beta = \frac{1}{\sqrt{c_{\alpha}}}\alpha$.  Then we have 
\[
\int _X \frac{\ii}{2} \beta \wedge \bar \beta =1
\]
while 
\[
\frac{\ii\beta (x) \wedge \overline{\beta(x)}}{\omega _F(x)} = 2 c_{\alpha}^{-1}e^{-2h_x(x)}\ge \frac{4}{\pi}.
\]
It follows from (B4) that 
\[
\frac{B_X(x,x)}{\omega _F(x)} \ge \frac{4}{\pi},
\]
which is what we wanted to show.
\end{proof}

\section{Invariant metric estimates}\label{S:invariant}

\subsection{The Bergman kernel again}
Let us recall the definitions of the classical invariant metrics, starting with the Bergman metric. If $\Omega\subset\C^n$ is a domain, the Bergman kernel function, $B_\Omega(z,w)$,  is the Schwarz kernel of the orthogonal projection operator
$B:L^2(\Omega)\longrightarrow A^2(\Omega)$, where $A^2(\Omega)$ denotes the holomorphic functions in $L^2(\Omega)$. That is,

\begin{equation*}
Bf(z)=\int_\Omega B_\Omega(z,w) f(w)\, dV_e(w),\qquad f\in L^2(\Omega).
\end{equation*}

\begin{rmk}
The link with the Bergman kernel discussed in the previous section is that on a domain in $\C^n$, the canonical bundle is trivial, and moreover the nowhere-zero section $dz^1 \wedge ... \wedge dz ^n$ squares to Lebesgue measure.
\red
\end{rmk}

Let $B(z) =B_\Omega(z,z)$ denote the Bergman kernel of $\Omega$ restricted to the diagonal
of $\Omega\times\Omega$. Define a Hermitian matrix $\left(g_{k\bar l}\right)$ of functions
by setting $$g_{k\bar l}(z)=\frac{\partial^2}{\partial z_k\partial\bar z_l}\log B(z).$$
Then if $\alpha=\sum \alpha_k d\bar z_k$ a $(0,1)$-form, the pointwise Bergman length of $\alpha$
is defined
$$|\alpha|_B=\left(\sum_{k,l=1}^n g^{k\bar l} \alpha_k\bar\alpha_l\right)^{1/2},$$
where $\left(g^{k\bar l}\right)=\left(g_{k\bar l}\right)^{-1}$ as matrices. 

\subsection{Invariant metrics}
An often useful way to determine the Bergman metric is as a ratio of extreme value problems. Since Hermitian metrics are usually defined on
tangent vectors, we formulate this alternate definition accordingly; to measure the Bergman length of a co-vector like $\alpha$ above, one merely uses duality in $\C^n$.

It is elementary to show that $B(z)$ itself solves an $L^2$ extremal problem:

$$B(z)=\sup\{ |f(z)|^2:f\in \co(\Omega)\text{ and } ||f||_2\leq 1\},$$
where $\co(\Omega)$ denotes holomorphic functions. If $X\in T^{1,0}_{\Omega}$ is a tangent vector, thought of here as a derivation, define a second extreme-value problem by

$$N(z;X)=\sup\{|Xf(z)|^2:f\in \co(\Omega), f(z)=0,
\text{ and } ||f||_2\leq 1\}.$$
The norm $\|\cdot\|_2$ in both problems is the euclidean $L^2$ norm $\Omega$. The Bergman
length of of $X$ at $z$ is then given by
\[
M_B(z;X)=\left(\frac{N(z;X)}{D(z,z)}\right)^{1/2}.
\]

To define the Caratheodory and Kobayashi metrics, let $H(U_1,U_2)$ denote the set of holomorphic mappings from $U_1$ to $U_2$, if $U_i\subset{\C}^{n_i}, i=1,2$, are open sets.  Let $\D$ denote the unit disk in $\C$. Fix a domain $\Omega\subset {\C}^n$, a point $z$, and a tangent vector $X \in T^{1,0}_{\Omega, z}$.

\begin{enumerate}
\item[(i)]The Caratheodory length of $X$ at $z$ is by definition the number 
\[
M_C(z;X)=\sup\{ |df(z)X|: f\in H(\Omega, \D), f(z)=0\}.
\]
\item[(ii)]
The Kobayashi length of $X$ at $z$ is by definition the number 
\[
M_K(z;X)=\inf\{|a| :\exists f\in H(\D,\Omega)\text{ with } f(0)=z\text{ and } f'(0) =X/a\}.
\]
\end{enumerate}

\begin{rmk}
The definitions we have given for the three metrics above fits the notion of a Finsler metric.  As we saw earlier, the Bergman metric is actually (the norm obtained from) a K\"ahler metric.  On the other hand, in general neither the Caratheodory nor the Kobayashi metrics are not induced by a Riemannian metric.
\red
\end{rmk}

\subsection{Estimates}
It is essentially impossible to find formulas that give the values of the above metrics, except in special cases of domains with high degrees of symmetry. However, a great deal of work in the last thirty
years has led to results showing how these metrics behave, approximately, as $z$ approaches $b\Omega$,
for wide classes of domains $\Omega$. These results show that the invariant metrics (and various derivatives
of $B_\Omega(z,w)$) can be bounded from above and below by an explicit pseudometric defined
in terms of the geometry of $\di \Omega$.

The following types of domains are ones to which we can apply the method above; they are all
{\it finite type} domains, as defined by D'Angelo \cite{dang-annals}, which means that the Levi
form associated to these domains degenerates to at most finite order, in a certain sense.
We refer to \cite{dang-annals} or \cite{dang-book} for the definition of finite type.

\begin{defn}\label{D:simple} Call a smoothly bounded, finite type domain $\Omega\subset{\C}^n$
{\it simple} if it is one of the following types:
\begin{itemize}
\item[(i)] $\Omega$ is strongly pseudoconvex,
\item[(ii)] $n=2$,
\item [(iii)]$\Omega$ is convex,
\item [(iv)] $\Omega$ is decoupled, i.e. $\Omega=\left\{ z:\text{Re }z_n+\sum_{k=1}^{n-1}
f_k(z_k)<0\right\}$ for some subharmonic functions $f_k$ of one complex variable.
\item[(v)] The eigenvalues of the Levi form associated to $b\Omega$ are all comparable.
\end{itemize}\end{defn}

The notion of a simple domain is ad hoc and merely refers to domains where the Bergman kernel and its derivatives have known estimates which are essentially sharp.  These kernel estimates are derived, for the various classes of domains in Definition \ref{D:simple}, in \cite{cat_c2, mcn_c2, mcn_decoup, mcn_convex, nrsw_c2, koe_comp}. 
See \cite{mcn_greene} for an expository account of these estimates.

One corollary of the estimates is that, on a simple domain, the invariant metrics $M_B, M_C,$ and $M_K$ are all comparable to each other as $z\to \di \Omega$. This asymptotic equivalence of all three invariant metrics is definitely known to be false, in general. (See, for example, \cite{died-forn_notcompare}). The comparability on simple domains means that getting $L^2$ estimates on $\dbar$ in either the Caratheodory or
Kobayashi metric, which are only Finsler metrics, can be obtained by estimating $\dbar$ in the Bergman metric, which is Hermitian and has a globally defined potential function (because we're on a domain in $\C^n$). Hermitian metrics given by global potentials are much more amenable to either the weighted or twisted approach for estimating $\dbar$.  We saw an example of such estimates at the end of the last section, in which the Bergman kernel played the role of a curvature rather than a potential. 

The known estimates on the Bergman kernel are sharp enough to show that, when $\Omega$ is simple, the potential function $\eta := c \log B$ satisfies condition \eqref{E:basic-twist-hyp}, for some constant $c>0$. The following $\bar\partial$ theorem then results.

\begin{d-thm}\label{T:invariant} Let $\Omega\subset\subset{\C}^n$ be a simple domain and let $\phi$
be a plurisubharmonic function on $\Omega$. 
There exists a constant $C>0$ so that, if $\alpha$ 
is a $\bar\partial$-closed $(0,1)$-form on $\Omega$, there exists a solution to $\bar\partial
u=\alpha$ which satisfies
$$\int_\Omega |u|^2\, e^{-\phi}\leq C \int_\Omega |\alpha|^2_B\, e^{-\phi},$$
assuming the right hand side is finite.\end{d-thm}

For a detailed proof of Theorem \ref{T:invariant}, and further information about the invariant metrics, see \cite{mcn_invariant}.


\section{Estimates for $\dbar$-Neumann}
\subsection{Compactness and subelliptic estimates}

Let $\square =\bar\partial\bar\partial^* +\bar\partial^*\bar\partial$ denote the ordinary, un-weighted complex 
Laplacian. The $\bar\partial$-Neumann problem is the following: given
$f\in L^2_{p,q}(\Omega)$, find $u\in L^2_{p,q}(\Omega)$ such that

$$\left\{ \aligned &\left(\bar\partial\bar\partial^*+\bar\partial^*\bar\partial\right)
u=f \\ &u\in\text{Dom }(\bar\partial^*)\cap\text{Dom }(\bar\partial) \\
&\bar\partial u\in \text{Dom }(\bar\partial^*) \\
&\bar\partial^* u\in \text{Dom }(\bar\partial).\endaligned\right.$$
When the problem is solvable, the $\bar\partial$-Neumann operator, $N$, maps square-integrable forms into the domain of
$\square$ and inverts $\square$. See \cite{Fol-Koh} for
details about $N$ and many other aspect of the $\bar\partial$-Neumann problem.

The $\bar\partial$-Neumann problem is not an elliptic boundary value problem. There are, however,
two analytic estimates on the $\bar\partial$-Neumann 
problem which serve as substitutes for elliptic estimates,
and have been extensively studied: the compactness estimate and the subelliptic estimate. 
Kohn and Nirenberg, \cite{Koh-Nir},
showed how these estimates can substitute for elliptic estimates, especially with regard to
proving up to the boundary regularity theorems on $N$.
Neither the compactness estimate nor the subelliptic
estimate hold on a general domain; the geometry of the boundary $\di \Omega$ of $\Omega$ plays a crucial role in whether these estimates hold. And, while it is known that the geometry of $\di \Omega$ is intimately connected with whether these estimates hold, it is not yet  understood exactly what geometric conditions imply these estimates, or the ``strength'' of these estimates when they do hold, for example, what sorts of geometric conditions are implied by the estimates. Catlin's theorem characterizing when subelliptic estimates hold, recalled below, is a remarkable result, in that, aside from being a tour de force of ideas and techniques, it is the closest thing we have to a complete picture.  Nevertheless, there remain interesting and difficult questions that are not addressed by Catlin's Theorem.

To state these estimates, we recall some notation, specialized to $(0,1)$-forms.  For an open set $U\subset{\C}^n$,
let ${\cd}^{0,1}(U\cap\overline\Omega)$ be the forms in Dom $(\bar\partial^*)$  which are also smooth on $U\cap\overline\Omega$,
and let $Q(u,u)=||\bar\partial u||^2 +||\bar\partial^* u||^2$ be the Dirichlet form associated to $\square$, defined on ${\cd}^{0,1}(\Omega)$. We say the $\dbar$-Neumann problem is {\it compact} if every sequence $\{u_n\}\in{\cd}^{0,1}(\Omega)$ such that $Q(u_n,u_n)\leq 1$ has a subsequence which converges with respect to the ordinary $L^2$ norm. Equivalently, the $\dbar$-Neumann problem, or $N$, is compact if and only if the following family of estimates, usually called the compactness estimate(s), hold: for every $\eta >0$, there exists a constant $C(\eta)$ such that
\begin{equation}\label{E:compactness}
\|u\|^2\leq \eta Q(u,u) +C(\eta)||u||^2_{-1}, \qquad u\in{\cd}^{0,1}(\Omega),
\end{equation}
where $||\cdot||_{-1}$ denotes the $L^2$ Sobolev norm of order $-1$. Although the estimates \eqref{E:compactness} are stated
globally, i.e. for forms in ${\cd}^{0,1}(\Omega)$, compactness is a local property: $N$ is compact on $\Omega$ if and only if every boundary point of $\Omega$ has a neighborhood $U$ such that $N_U$ -- the $\bar\partial$-Neumann operator associated to $U\cap\Omega$ -- is compact. See \cite{fu-str} for a survey of results on compactness of the $\bar\partial$-Neumann problem.

A subelliptic estimate is a quantified form of compactness. Let $p\in \di \Omega$ and $U$ be a neighborhood of $p$ in ${\C}^n$. A subelliptic estimate of order $\epsilon >0$ holds in $U$ if there exists a constant
$C>0$ such that

\begin{equation}\label{E:subelliptic}
\|u||^2_\epsilon\leq CQ(u,u),\qquad u\in{\cd}^{0,1}(U\cap\overline\Omega),
\end{equation}
where $||\cdot||_\epsilon$ denotes the $L^2$ Sobolev norm of order $\epsilon$.

There are potential-theoretic conditions that imply \eqref{E:compactness}. The first general condition
was given by Catlin \cite{cat_P}.

\begin{defn}\label{D:propP}A pseudoconvex domain $\Omega\subset{\C}^n$ satisfies
Property P if for every $M>0$ there exists $\psi=\psi_M\in C^2(\Omega)$ such that
\begin{itemize}
\item[(i)] $|\psi|\leq 1$ on $\Omega$
\item [(ii)] $i\partial\bar\partial\psi(p)\big(\xi,\xi\big)\geq M|\xi|^2$ for $p\in \di \Omega$ and $\xi\in{\C}^n$.
\end{itemize}
\end{defn}

\begin{d-thm}\label{T:P_implies_compact} If $\Omega$ satisfies Property $P$, then $N$ is compact.
\end{d-thm}

Catlin's proof of Theorem \ref{T:P_implies_compact} follows from Theorem \ref{basic-est-gen}, using the functions in Definition \ref{D:propP} as weight functions there. For a form $u\in{\cd}^{0,1}(\Omega)$, write $u=u_1+u_2$, where $u_1$ is supported near $\di\Omega$
and $u_2$ is compactly supported in $\Omega$. It follows from (ii) of Definition \ref{D:propP} that, for arbitrarily large $M$, 
\[
||u_1||^2\leq\frac 1M Q(u_1,u_1).
\]
But $u_2$ satisfies elliptic estimates, since its support is disjoint from $b\Omega$. Together, these estimates
imply that \eqref{E:compactness} holds.

In \cite{mcn-sbg}, a generalization of Property $P$ is given.

\begin{defn}\label{D:prop_tildeP} A domain $\Omega$ is said to satisfy Property $\tilde P$ if for every $M>0$ there exists $\tilde\psi=\tilde\phi_M\in C^2(\overline\Omega)$ such that
\begin{itemize}
\item[(i)] $\tilde\psi$ has self-bounded gradient
\item[(ii)] $i\partial\bar\partial\tilde\psi(p)\big(\xi,\xi\big)\geq M|\xi|^2$ 
for $p\in \di \Omega$ and $\xi\in{\Bbb C}^n$.
\end{itemize}\end{defn}

\begin{d-thm}\label{T:tildeP_implies_compact}If $\Omega$ satisfies Property $\tilde P$, then $N$ is compact.
\end{d-thm}

Property $P$ implies Property $\tilde P$.  Indeed, given a family of functions $\psi_M$
satisfying the conditions in Definition \ref{D:propP}, the functions $\tilde\psi_M=\exp (\psi_M)$ satisfy Definition \ref{D:prop_tildeP}. But Property $\tilde P$ is more general.  As stated earlier, (i) in Definition \ref{D:prop_tildeP} does not imply that $\tilde\psi_M$ is bounded independent of $M$, e.g. the functions $\tilde\psi_M =-\log(-f+\frac 1M)$ for $f<0$ and strictly plurisubharmonic, have self-bounded gradient but are not uniformly bounded near $\{f=0\}$.

The proof of Theorem \ref{T:tildeP_implies_compact} proceeds by a duality argument. First, the functions in Definition  \ref{D:prop_tildeP} are used as the weight functions $\psi$ in Theorem \ref{tbe};  $\tau$ and $A$ are set equal to $e^{-\psi}$ and $2e^{-\psi}$, respectively. Splitting $u\in{\cd}^{0,1}(\Omega)$, $u=u_1+u_2$, as before, we obtain
\[
||u_1||^2_{2\psi}\leq \frac 1M\left(||\bar\partial u_1||^2_{2\psi} +
||\bar\partial^*_\psi u_1||^2_{2\psi}\right).
\]
Note the weight $\psi$ in $\bar\partial^*_\psi$, while the norms are with respect to $2\psi$. A Riesz representation argument, in the same spirit as that which proves Theorem \ref{h-type-thm}, shows that if $\alpha$ is a $\bar\partial$-closed $(0,1)$-form
\begin{equation}\label{E:P_aux}
||\bar\partial^* N\alpha||^2\leq K\left(\frac 1M||\alpha||^2 +C(M)||\alpha||^2_{-1}\right),
\end{equation}
for a constant $K$ independent of $M$. The compactness of the operator $\bar\partial^*N$ follows from \eqref{E:P_aux}; the compactness of $N$ itself follows from this and a little functional analysis.

Turning to subelliptic estimates, Catlin showed that a quantified version of Property $P$ implies that subelliptic estimates hold.

\begin{d-thm}\label{T:catlin_sub} 
Let $\Omega\subset{\C}^n$ be a smoothly bounded, pseudoconvex domain.  Let $p\in \di \Omega$ and $W$ a neighborhood of $p$. Suppose that, for all sufficiently small $\delta >0$, there exists $\psi_\delta\in C^\infty(\overline\Omega)$ such that
\begin{itemize}
\item[(i)] $\psi_\delta$ is plurisubharmonic on $\Omega\cap W$,
\item[(ii)] $\left|\psi_\delta(z)\right| \leq 1$   for $z\in\Omega\cap W$,
\item[(iii)] For $z\in\left\{ z\in W: -\delta <r(z)<0\right\}$,
\[
\ii\partial\bar\partial\psi_\delta(z)\big(\xi,\xi\big)
\geq c \, \delta^{-2\epsilon}|\xi|^2,\qquad\xi\in{\C}^n
\]
for some positive constant $c>0$ independent of $z$, $\xi$ and $\delta$.
\end{itemize}
Then there exists a neighborhood $U\subset W$ of $p$ and positive constants $C$ such that \eqref{E:subelliptic} holds with $\epsilon$ appearing in (iii) above.
\end{d-thm}

This theorem is, therefore, the stunning equivalence of the geometric condition that $\di \Omega$ has finite type and the analytic condition that \eqref{E:subelliptic} holds for some $\epsilon >0$.

The difficult part of Catlin's paper \cite{cat_sub} is {\it construction} of the functions
$\psi_\delta$ satisfying the hypotheses of Theorem \ref{T:catlin_sub}, in a neighborhood of a point of finite type $p\in \di \Omega$. 
Once the functions $\psi_\delta$ are in hand, Theorem \ref{T:catlin_sub} precisely
connects the rate of blow-up of the Hessians $\ii\partial\bar\partial\psi_\delta$ to the strength
of the subelliptic estimate. However the connection between the type $T(p)$ of a point $p \in \di \Omega$ (see, for example, \cite{dang-annals, dang-book}) and the (best possible) $\epsilon$ in \eqref{E:subelliptic} is not known -- the construction given in \cite{cat_sub} provides an $\epsilon$ such that $\frac{1}{\epsilon}$ is doubly exponential in $T(p)$. Determining
the exact relationship between $T(p)$ and $\epsilon$ is an intriguing and difficult
problem that remains to be solved.

Parallel to the way Theorem \ref{T:tildeP_implies_compact} extends Theorem \ref{T:P_implies_compact}, a more general sufficient condition for subellipticity than Theorem \ref{T:catlin_sub} was established.

\begin{d-thm}{(\cite{herbig_thesis})}\label{T:herbig_thesis} Let $\Omega\subset{\C}^n$ be a smoothly bounded, pseudoconvex domain.
Let $p\in b\Omega$ and $W$ a neighborhood of $p$. Suppose there exists a constant $c>0$ so that,  for all sufficiently small $\delta >0$, there exists $\tilde\psi_\delta\in C^\infty(\overline\Omega)$ such that
\begin{itemize}
\item[(i)] $\tilde\psi_\delta$ is plurisubharmonic on $\Omega\cap W$,
\item[(ii)] $\tilde\psi_\delta$   has self-bounded gradient on $\Omega\cap W$,
\item[(iii)] For $z\in\left\{ z\in W: -\delta <r(z)<0\right\}$,
\[
\ii\partial\bar\partial\tilde\psi_\delta(z)\big(\xi,\xi\big)
\geq c \, \delta^{-2\epsilon}|\xi|^2,\qquad\xi\in{\C}^n.
\]
\end{itemize}
Then there exists a neighborhood $U\subset W$ of $p$ and a constant $C>0$ such that \eqref{E:subelliptic} holds with $\epsilon$ as in (iii) above.
\end{d-thm}

As in Theorem \ref{T:tildeP_implies_compact}, this extension relaxes the requirement that the plurisubharmonic functions be bounded to only having self-bounded
gradient. Herbig's proof of Theorem \ref{T:herbig_thesis} also
starts with the inequalities behind Theorem \ref{tbe}, though new arguments are required, including a careful analysis of a partition of unity in the phase variable. She also obtains
relatively simple constructions of the functions $\tilde\psi_\delta$ on some finite type domains, leading to the hope of
establishing sharp subelliptic estimates on families of domains where these estimates are not yet known.



\end{document}